\newif\ifpdf
\def\ordpair#1{(#1\nobreak)}
\def\rel#1#2{R_{#1}^{#2}}
\def\arity#1{a(\rel{}{#1})}
\def\dom{\mathop{\mathrm{dom}}\nolimits}
\newtheorem{lemma}{Lemma}[section]
\newtheorem{theorem}[lemma]{Theorem}
\newtheorem{corollary}[lemma]{Corollary}
\newtheorem{proposition}[lemma]{Proposition}
\newtheorem{conjecture}[lemma]{Conjecture}
\theoremstyle{definition}
\newtheorem{definition}[lemma]{Definition}
\newtheorem{question}[lemma]{Question}
\newtheorem{remark}[lemma]{Remark}
\newtheorem{remark*}{Remark}
\newtheorem{remarks}[lemma]{Remarks}
\newtheorem{example}[lemma]{Example}
\newtheorem{notation}[lemma]{Notation}
\def\N{\mathbb{N}}
\def\C{\mathcal{C}}
\def\D{\mathcal{D}}\def\E{\mathcal{E}}
\def\R{\mathbb{R}}
\def\Aut{{\rm Aut}}
\def\dom{{\rm dom}}
\def\a{\bar{a}}
\def\cl{{\rm cl}}
\def\scl{{\rm scl}}
\def\sdcl{{\rm sdcl}}
\def\S{\mathcal{S}}
\def\K{{\mathcal{K}}}
\def\sleq{\sqsubseteq_s}
\def\dleq{\sqsubseteq_d}
\def\sqleq{\sqsubseteq}
\def\Age{{\rm Age}}
\def\Roots{{\rm roots}}
\def\c{\bar{c}}
\def\Fraisse{Fra\"{\i}ss\'e}
\def\G{\mathcal{G}}
\def\str#1{{#1}}
\def\Ind#1#2{#1\setbox0=\hbox{$#1x$}\kern\wd0\hbox to 0pt{\hss$#1\mid$\hss}
\lower.9\ht0\hbox to 0pt{\hss$#1\smile$\hss}\kern\wd0}
\def\Notind#1#2{#1\setbox0=\hbox{$#1x$}\kern\wd0\hbox to 0pt{\mathchardef
\nn="3236\hss$#1\nn$\kern1.4\wd0\hss}\hbox to 0pt{\hss$#1\mid$\hss}\lower.9\ht0
\hbox to 0pt{\hss$#1\smile$\hss}\kern\wd0}
\begin{document}

\title[]{Automorphism groups and Ramsey properties of sparse graphs}

\authors{

\author{David M. Evans}

\address{%
Department of Mathematics\\
Imperial College London\\
London\linebreak SW7~2AZ\\
UK.}

\email{david.evans@imperial.ac.uk}

\author{Jan Hubi\v cka}

\address{%
Department of Applied Mathematics (KAM)\\
Charles University\\
Malostrank\'e n\'am. 25\\
11800 Praha\\
Czech Republic.}

\email{hubicka@kam.mff.cuni.cz}

\author{Jaroslav Ne\v set\v ril}

\address{%
Computer Science Institute of Charles University (IUUK)\\
Charles University\\
Malostrank\'e n\'am. 25\\
11800 Praha\\
Czech Republic.}

\email{nesetril@iuuk.mff.cuni.cz}

\thanks{Jan Hubi\v cka is supported by project 18-13685Y of the Czech Science Foundation (GA\v CR)}

}

\date{15 January 2019}

\subjclass[2010]{Primary 05D10, 20B27, 37B05; Secondary 03C15, 05C55, 22F50, 54H20}

\begin{abstract}  We study  automorphism groups of  sparse graphs from the viewpoint of topological dynamics and the Kechris, Pestov, Todor\v cevi\'c correspondence. We investigate amenable and ex\-treme\-ly amenable subgroups of these groups using the space of orientations of the graph and results from structural Ramsey theory. Resolving one of the open questions in the area, we show that Hrushovski's example of an $\omega$-categorical sparse graph has no $\omega$-categorical expansion with extremely amenable automorphism group.
\end{abstract}

\maketitle

\tableofcontents

\section{Introduction}

\subsection{Overview}
If $k$ is a natural number, we say that a graph is \emph{$k$-sparse} if, for every finite subgraph, the number of edges is bounded above by $k$ times the number of vertices. Classes of such graphs arise in model theory in Hrushovski's \emph{predimension constructions} and are an important source of counterexamples to many questions and conjectures in model-theoretic stability theory. The main aim of this paper is to study these classes from the twin viewpoints of structural Ramsey theory and topological dynamics. As we shall see,  Ramsey expansions of these classes exhibit rather different behaviour from that of classes studied previously and, correspondingly, the automorphism groups of the Fra\"{\i}ss\'e limits of
these classes exhibit new phenomena in topological dynamics. 
\medskip

The symmetric group $S_\infty$ (on a countably infinite set $M$)  can be considered as a Polish topological group by giving it the topology of pointwise convergence. With this topology, a subgroup of $S_\infty$ is closed if and only if it is the automorphism group of a first-order structure with domain $M$. A subgroup $G$ of $S_\infty$ is \emph{oligomorphic} if $G$ has finitely many orbits on $M^n$ for all natural numbers $n$. It is well known that, by the Ryll-Nardzewski Theorem, the closed, oligomorphic subgroups of $S_\infty$ are precisely the automorphism groups of $\omega$-categorical structures with domain $M$. 

Recall  that a topological group $G$ is \emph{extremely amenable} if whenever $X$ is a \emph{$G$-flow}, that is, a non-empty, compact Hausdorff $G$-space on which $G$ acts continuously, then there is a $G$-fixed point in $X$. The starting point for this paper is the following question, raised in \cite{BPT} and \cite[Question 1.1]{MTT}.

\begin{question}\label{qu} Suppose $G$ is a closed, oligomorphic permutation group on a countable set. Does there exist a closed, extremely amenable, oligomorphic subgroup  of $G$?
\end{question}

The question can be formulated in other ways. For example, it asks, given a countable $\omega$-categorical structure $M$, does there exist an $\omega$-categorical expansion of $M$ whose automorphism group is extremely amenable? Using the Kechris, Pestov, Todor\v cevi\'c 
(KPT for short) correspondence from \cite{KPT}, the question can be phrased in terms of \emph{Ramsey classes} and \emph{Ramsey lifts}, and in this form, it was asked by the third author as a question about the characterisation of Ramsey classes~\cite{NH}.
 More precisely, suppose $L$ is a first-order language and $\K$ is a Fra\"{\i}ss\'e class of finite $L$-structures. Thus there is a countable homogeneous $L$-structure with $\K$ as its class of (isomorphism types of) finite substructures. Suppose $L^+ \supseteq L$ is a language extending $L$. We say that a class $\K^+$ of finite $L^+$ structures is an
\emph{expansion} (\emph{lift} in \cite{HN}) of $\K$ if the $L$-reducts ($L$-shadow in \cite{HN}) of the structures in $\K^+$ form the class $\K$; it is a \emph{Ramsey expansion} (or \emph{Ramsey lift})
 if additionally it is a Ramsey class. The above question is then asking, in the case where $\K$ has only finitely many isomorphism types of structure of each finite size, whether there is a Ramsey expansion $\K^+$ of $\K$ with the same property.

\medskip

We discuss below some of the motivation for this question, but first we state some of our main results, showing that 
Question~\ref{qu} has a negative answer in general.

\begin{theorem} \label{cex} There exists a countable, $\omega$-categorical structure $M$ with the property that if $H \leq \Aut(M)$ is extremely amenable, then $H$ has infinitely many orbits on $M^2$. In particular, there is no $\omega$-categorical expansion of $M$ whose automorphism group is extremely amenable.
\end{theorem}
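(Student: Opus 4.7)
The plan is to take $M$ to be Hrushovski's $\omega$-categorical sparse graph: the \Fraisse-like limit, in the category of \emph{strong} embeddings, of a class $\K$ of finite $k$-sparse graphs on which the predimension $\delta(A)=k|A|-e(A)$ is non-negative, the class being refined so that $\delta$-closures stay finite and $M$ is $\omega$-categorical.  The strategy is to exploit the space of orientations of $M$ as a $G$-flow for $G = \Aut(M)$, as signalled in the abstract.

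Let $\Omega\subseteq\{0,1\}^{E(M)}$ consist of the orientations of $M$ whose restriction to each finite subgraph is \emph{admissible}, meaning it satisfies a bound on out-degrees compatible with $k$-sparseness (by Hakimi--Nash-Williams).  Then $\Omega$ is non-empty and closed, hence compact, and $G$ acts continuously on it by transport of orientation, so $\Omega$ is a $G$-flow.  If $H\leq G$ is extremely amenable, then $H$ has a fixed point $\omega\in\Omega$: that is, $H$ preserves an admissible orientation of $M$.

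The remaining task is to show that any $H\leq G$ fixing an admissible orientation $\omega$ has infinitely many orbits on $M^2$.  For each $n$, using the strong amalgamation property of $\K$, I would embed into $M$ a family of \emph{minimally algebraic} configurations whose admissible orientations realise at least $n$ pairwise non-isomorphic patterns.  The orientation $\omega$ imprints one specific pattern on each copy, and these patterns force at least $n$ distinct $H$-orbits on pairs drawn from the copies.  Letting $n\to\infty$ then gives the conclusion, and in particular rules out any $\omega$-categorical expansion of $M$ with extremely amenable automorphism group.

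The main obstacle is this last step --- producing enough configurations whose admissible orientations cannot be uniformised by $H$.  Here the combinatorics of Hrushovski's predimension construction enters crucially: the $\delta$-closure creates local dependency patterns that no single admissible orientation can simultaneously collapse, and the genericity of $M$ forces it to realise all such finite strong extensions, so $\omega$ must distinguish unboundedly many pair-types.  This tension between the compactness of $\Omega$ (which yields a fixed orientation) and the intrinsic asymmetry of the predimension (which prevents any such orientation from being finitely imprinted) is what drives Theorem~\ref{cex}.
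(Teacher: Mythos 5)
Your setup coincides with the paper's: take $M$ to be the $\omega$-categorical Hrushovski sparse graph, let $G=\Aut(M)$ act on the compact space of $k$-orientations, and use extreme amenability of $H$ to extract an $H$-fixed orientation $S$, so that $H\leq K:=\Aut(M;S)$. The gap lies entirely in the step you yourself flag as ``the main obstacle'': you never prove that $K$ has infinitely many orbits on $M^2$, and the mechanism you gesture at (families of configurations whose admissible orientations realise many non-isomorphic patterns) does not obviously yield distinct $K$-orbits of pairs, since a priori $K$ could identify pairs lying in differently-oriented copies unless you control how the orientation near a pair determines its orbit. Tellingly, your sketch nowhere uses the one hypothesis that actually drives the conclusion: every vertex of $M$ has infinite valency.

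The paper's argument (Theorem~\ref{sparsethm}) closes this gap with a short reachability argument rather than a pattern count. Suppose $K$ had finitely many orbits on $M^2$; then $K$ has finitely many orbits on $M$ and each stabiliser $K_a$ has finitely many orbits on $M$. Since the orientation has out-degree at most $k$, a vertex $b$ reachable from $a$ by a directed path of length $r$ lies in a $K_a$-orbit of size at most $k^r$, so the set of vertices reachable from $a$ is a finite union of finite orbits, hence finite, with a bound $l$ uniform in $a$. Choosing $a$ whose reachable set $A$ attains the maximal size $l$, infinite valency gives a neighbour $c\notin A$; the edge $\{a,c\}$ must be oriented from $c$ to $a$, so the set reachable from $c$ contains $\{c\}\cup A$ and has size at least $l+1$, a contradiction. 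To complete your proposal you would need either to reproduce an argument of this kind, or to make your ``imprinted patterns'' precise in the style of the proof of Theorem~\ref{CorC0}(1), where long directed paths and the finiteness of the directed balls $B(a;i)$ are used to show that vertices at different directed distances from $a$ fall into unboundedly many $K_a$-orbits.
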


Using \cite{Z}, or \cite{BYMT}, we then have the following corollary, answering Question 1.5 in \cite{MTT}:

\begin{corollary} There is a closed, oligomorphic permutation group $G$ on a countable set whose universal minimal flow $M(G)$ is not metrizable.
\end{corollary}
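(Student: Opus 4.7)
The plan is to deduce the Corollary directly from Theorem~\ref{cex} by invoking the characterisation of metrisability of universal minimal flows proved by Zucker in \cite{Z} and, in a broader setting, by Ben Yaacov, Melleray and Tsankov in \cite{BYMT}. Concretely, I will take $M$ to be the $\omega$-categorical structure supplied by Theorem~\ref{cex} and set $G=\Aut(M)$. Since $M$ is countable and $\omega$-categorical, $G$ is a closed, oligomorphic subgroup of $\Sym(M)$, so it is a candidate for the desired witness; the only thing that needs checking is that $M(G)$ fails to be metrisable.

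The key input from \cite{Z,BYMT} that I will quote is the following equivalence for a closed, oligomorphic subgroup $G$ of $S_\infty$: the universal minimal flow $M(G)$ is metrisable if and only if $G$ contains a closed, extremely amenable subgroup $H$ such that $G/H$ is precompact (in the left uniformity on $G$). In the oligomorphic setting, precompactness of $G/H$ unpacks to the statement that $H$ has only finitely many orbits on each $M^n$, i.e.\ that $H$ itself is oligomorphic. In particular, such an $H$ has finitely many orbits on $M^2$.

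Granted this equivalence, the deduction becomes a one-line contradiction. If $M(G)$ were metrisable, the characterisation would supply a closed, extremely amenable subgroup $H\le G$ with finitely many orbits on $M^2$; but Theorem~\ref{cex} says that \emph{every} extremely amenable subgroup of $\Aut(M)$ has infinitely many orbits on $M^2$. Hence $M(G)$ is not metrisable, and $G$ is the required group. The only point to watch is that the precompactness hypothesis appearing in \cite{Z,BYMT} is correctly translated into the orbit-counting conclusion negated in Theorem~\ref{cex}; no other obstacle arises, as the mathematical substance of the Corollary is wholly packaged inside Theorem~\ref{cex}.
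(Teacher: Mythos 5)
Your proposal is correct and follows exactly the paper's intended argument: the corollary is deduced from Theorem~\ref{cex} via the Zucker/Ben Yaacov--Melleray--Tsankov characterisation, with metrisability of $M(G)$ yielding a closed, co-precompact (hence oligomorphic, hence finitely many orbits on $M^2$) extremely amenable subgroup, contradicting the theorem. The translation of co-precompactness into the orbit-counting statement is handled correctly, so nothing is missing.
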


As a direct corollary to the results in Section~\ref{sec:meagre}, we also show:

\begin{corollary} \label{cormeagre}There is a closed, oligomorphic permutation group $G$ which has a metrizable minimal flow where all $G$-orbits are meagre.
\end{corollary}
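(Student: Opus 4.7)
The plan is to build the desired flow inside a suitable space of expansions of the Hrushovski-style $\omega$-categorical sparse graph $M$ supplied by Theorem~\ref{cex}. Write $G=\Aut(M)$ and let $E$ denote the edge set of $M$, which is countable. Take a closed $G$-invariant compact subspace $Y\subseteq 2^E$ of \emph{admissible orientations} of $M$ (the notion of admissibility being the one developed in Section~\ref{sec:meagre}); since $G$ acts continuously on $2^E$ and on $Y$, Zorn's lemma applied to the poset of non-empty closed $G$-invariant subsets of $Y$ yields a minimal subflow $X\subseteq Y$. As $X$ is a closed subspace of the second-countable space $2^E$, it is automatically metrizable, so the metrizability assertion is free. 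It remains to arrange that every $G$-orbit on $X$ is meagre.

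I would prove this by contradiction. Each $G$-orbit on $X$ is Borel (indeed $F_\sigma$) and so has the Baire property; in a minimal flow the closure of every orbit is all of $X$, so a non-meagre orbit $G\cdot x$ would actually be comeagre in $X$. Applying Effros' theorem to the action of the Polish group $G$ on the Polish space $X$, the stabiliser $H:=G_x$ would then be a closed subgroup of $G$ and the natural map $G/H\to G\cdot x$ would be a $G$-equivariant homeomorphism. The point $x$ would thus be a \emph{generic} admissible orientation of $M$. Via the KPT correspondence combined with the Ramsey-theoretic properties of admissible orientations established in Section~\ref{sec:meagre}, a generic point of a minimal flow arising from a Ramsey expansion class produces a closed, oligomorphic, extremely amenable subgroup of $G$. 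This directly contradicts Theorem~\ref{cex}, which forbids any oligomorphic closed subgroup of $\Aut(M)$ from being extremely amenable. Hence no orbit on $X$ is non-meagre, and Corollary~\ref{cormeagre} follows.

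The main obstacle lies in the conversion of ``$G\cdot x$ is comeagre in $X$'' into ``$H=G_x$ is oligomorphic and extremely amenable.'' The closedness of $H$ and the homeomorphism $G/H\cong G\cdot x$ are handed down by Effros. Oligomorphicity of $H$ requires that the generic orientation $x$ determines only finitely many $H$-types of $n$-tuples in $M$, which has to be extracted from the combinatorics of admissible orientations in Section~\ref{sec:meagre}; extreme amenability requires identifying $X$ as (a factor of) the universal minimal flow of $H$ and feeding the Ramsey property of the admissible-orientation class into the KPT equivalence. Once both of these translations are in place, Corollary~\ref{cormeagre} reduces, by a clean contrapositive, to Theorem~\ref{cex}.
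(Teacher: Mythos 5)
Your reduction to the non-existence of a comeagre orbit is fine as far as it goes (in a minimal flow a non-meagre orbit is comeagre, and Effros then identifies it with $G/G_x$), and metrizability is indeed free since the flow sits inside $\{0,1\}^{M^2}$. The gap is in the step you yourself flag as the main obstacle: converting ``$G\cdot x$ is comeagre'' into ``$G_x$ is oligomorphic and extremely amenable'' so as to contradict Theorem~\ref{cex}. Neither half of that conversion is available. For extreme amenability, the KPT correspondence (Theorem~\ref{thm:KPT}) would require the age of $(M,x)$ --- a class of $2$-orientations --- to be a Ramsey class of \emph{rigid} structures; orientations of graphs are not rigid, minimality of the subflow corresponds only to the Expansion Property (Theorem~\ref{EPthm}), not to the Ramsey property, and the central negative results of the paper are precisely that no precompact Ramsey expansion of these classes exists, so the premise ``a minimal flow arising from a Ramsey expansion class'' is false here. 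For oligomorphicity the situation is worse: the proof of Theorem~\ref{sparsethm} shows that the stabiliser of \emph{any} $k$-orientation of a sparse graph of infinite valency has infinitely many orbits on $M^2$, so $G_x$ is never oligomorphic and the ``oligomorphic extremely amenable subgroup'' you want to feed to Theorem~\ref{cex} can never be produced this way. (There is a salvageable variant: a comeagre orbit in a metrizable minimal flow forces $G_x$ to be co-precompact, hence oligomorphic, by results of Melleray--Nguyen Van Th\'e--Tsankov, and that alone already contradicts the sparsity argument just quoted, with no mention of extreme amenability --- but that is not the argument you wrote, and it imports a theorem not proved in this paper.)

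The paper's actual proof is direct rather than by contradiction via KPT. By Lemma~\ref{reasonable} a minimal subflow of the space of orientations has the form $X(\D')$ for a reasonable class $\D'$ with the Expansion Property; Lemma~\ref{waplemma} shows that if $(\D';\leq)$ fails the \emph{weak amalgamation property} then every orbit on $X(\D')$ is meagre; and Proposition~\ref{wapprop} establishes that failure by an explicit combinatorial construction, attaching the half-binary trees $T_0(n)$ and the trees-with-a-$2m$-cycle $T_1(3m)$ so as to extend the successor-closure of a single vertex in two ways that cannot be amalgamated over that vertex. Corollary~\ref{cormeagre} then follows by taking $G=\Aut(M_F)$, which is oligomorphic. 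To complete your write-up you would need to supply either this weak-amalgamation analysis or the co-precompactness result mentioned above; the extreme-amenability route cannot be made to work.
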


The example which gives these results is Hrushovski's construction of an $\omega$-categorical pseudoplane from \cite{Hrpp}. This is one of a variety of constructions of countable structures $M$ which we shall refer to as \emph{Hrushovski constructions}. Details will be given later in this paper (in Section~\ref{hcsec}; also Section~\ref{sec:C0}). One feature that all of the variations on the construction have is that $M$ interprets a {sparse} graph $\Gamma$. In this case, $\Aut(M)$ has a continuous action on the compact space $X_\Gamma$ of all
 \emph{orientations} of this graph (see Definition~\ref{or}). This is the main tool and object of study in this paper. In Section~\ref{sparsesec}, we describe $X_\Gamma$ and use it to prove Theorem~\ref{cex} (in the more general form of Theorem~\ref{sparsethm}).  As an additional benefit,
we also use it (in Section~\ref{nonambsec}) to give a simple proof of a general result (Theorem~\ref{38}) about non-amenability of $\Aut(M)$ which generalises results in \cite{GKP}. The argument we use shows that in Theorem~\ref{cex} we may take $M$ also having the property that there is no $\omega$-categorical expansion of $M$ with \emph{amenable}  automorphism group  (Corollary~\ref{noamenable}). In  Theorem~\ref{meagreorbits}, we give examples where every minimal subflow of $X_\Gamma$ has all orbits meagre, thereby proving Corollary~\ref{cormeagre}.

\medskip

The results of Kechris, Pestov and Todor\v cevi\'c in \cite{KPT} make a strong connection between the study of continuous actions of automorphism groups $G$ of countable structures on compact spaces (`topological dynamics') and Ramsey properties of classes $\K$ of finite structures (`structural Ramsey theory'). In particular, if $G = \Aut(M)$ preserves a linear ordering on $M$ and the language for $M$ is such that two tuples are in the same $\Aut(M)$-orbit iff they have the same quantifier free type (that is, $M$ is \emph{homogeneous}), then $G$ is extremely amenable if and only if the class $\K = \Age(M)$ of finite substructures of $M$ is a Ramsey class.

More generally, say that a subgroup $H \leq G = \Aut(M)$ is a \emph{co-precompact} subgroup of $G$ if, for every $n \in \N$, every $G$-orbit on $M^n$ is a union of finitely many $H$-orbits. If $H$ is closed, co-precompact in $G$ and extremely amenable, then (\cite{KPT, NVT}) the completion $\widehat{G/H}$ of the quotient space $G/H$ with respect to the right uniformity on $G$ is compact and the universal minimal flow $M(G)$ of $G$ is isomorphic to a minimal subflow of this. 
Thus, if one has a co-precompact, extremely amenable subgroup of $G$, then one has control over the universal minimal flow of $G$. Question~\ref{qu} asks whether one is guaranteed such a subgroup in the case where $M$ is $\omega$-categorical.

The above analysis shows that if $G$ has a co-precompact extremely amenable subgroup, then its universal minimal flow $M(G)$ is metrizable. In fact, the converse is also true: if $M(G)$ is metrizable, then there is a comeagre $G$-orbit on $M(G)$ and the stabilizer of a point in this orbit is extremely amenable and co-precompact in $G$. This is proved by Zucker in \cite{Z} and, independently, by Ben Yaacov, Melleray and Tsankov in \cite{BYMT}. (The latter builds on work in \cite{MTT} and proves the result for arbitrary Polish groups $G$.) 

Most proofs that a particular subgroup $H \leq \Aut(M)$ is extremely amenable make use of structural Ramsey theory. One identifies $H$ as the automorphism group of a homogeneous structure $N$ and shows that $\Age(N)$ is a Ramsey class (of ordered structures). Many examples of this can be found in the paper \cite{HN} and in the surveys \cite{B,N}.

The question of finding extremely amenable subgroups of $\Aut(M)$, or equivalently, finding good Ramsey expansions of $M$,  also has applications in the study of reducts of $M$ (see \cite{BP}) and hence to classifying constraint satisfaction problems with template $M$.  The paper \cite{Ivanov} by Ivanov also mentions the question of whether, if $\Aut(M)$ is amenable, then it has a precompact extremely amenable subgroup.

Our results show that the general problem of describing the universal minimal flow (and hence, all minimal flows) of a closed subgroup $G$ of $S_\infty$ is more complicated than the above picture suggests, even in the case where the subgroup is the automorphism group of an $\omega$-categorical structure (and therefore Roelcke precompact). In our examples, $G$ does not necessarily have the co-precompact extremely amenable subgroup needed to make the machinery work. Moreover, we show in Section 5 that for our examples, again in contrast to the above picture, minimal subflows of the space $X_\Gamma$ of orientations have all orbits meagre. So there are metrizable $G$-flows which have no comeagre orbit. 

It should be noted that Question~\ref{qu} remains open for $G = \Aut(M)$ where $M$ is a structure which is homogeneous for a finite relational language (the Hrushovski constructions require an infinite language for homogeneity).

In the Section~\ref{CFsec} we prove some positive results for our examples. We study two versions of the Hrushovski construction. The more technically challenging of these is the $\omega$-categor\-ical case considered in Section~\ref{CFsec}. Whilst this is perhaps the more important case from the point of view of Question~\ref{qu}, the other case is natural and of interest in its own right. In each case, we have an amalgamation class $(\C; \leq)$ of finite sparse graphs and a distinguished notion $\leq$ of `strong substructure'. There is an associated Fra\"{\i}ss\'e limit $M$ and, in each case, for $G = \Aut(M)$ we identify a maximal extremely amenable closed subgroup $H$ of $G$, corresponding to an `optimal' Ramsey expansion of $(\C; \leq)$ (Theorem~\ref{59}).

 These positive results raise the possibility that there might still be a weaker statement than the KPT correspondence in~\cite{NVT} which holds more generally. But in any case, the whole KPT-type correspondence for expansions is more complicated than perhaps was thought. The  Hrushovski construction leads to interpreting classes of structures which display a complicated behaviour and interplay of  related notions: Ramsey classes, the Expansion Property (Definition~\ref{EP:def}) and EPPA (Definition~\ref{def:EPPA}).

\medskip

\textit{Acknowledgements:\/} The first author thanks Todor Tsankov for numerous helpful discussions about the material in this paper, particularly about the proof of Theorem~\ref{38}. The authors also thank the Referee for numerous helpful suggestions and corrections.

\section{Background} For the convenience of the reader, we provide some background material on homogeneous structures, automorphism groups and Ramsey classes. Although we work with more general classes of finite structures than the \Fraisse~classes in, for example, \cite{KPT, NVT},  there is little that is new here and the reader who is familiar with this type of material can proceed to the following sections, referring back to this section where necessary.

We briefly review some standard model-theoretic notions.  Let
$L$ be a first-order relational language involving relational symbols $R\in L$
each having associated \emph{arities} denoted by $\arity{}$.  An 
\emph{$L$-structure} $\str{A}$ is a structure with \emph{domain} $A$, and
relations $R^A\subseteq A^{\arity{}}$, $R\in L$. The elements of the domain will often be referred to as \emph{vertices} of the structure.

The language is usually fixed and understood from the context (and it is in most cases denoted by $L$).  If the set $A$ is finite we call $\str A$ a \emph{finite structure}. We consider only structures with finitely or countably infinitely many vertices. 

A \emph{homomorphism} $f:\str{A}\to \str{B}$ is a mapping $f:A\to B$ such that  for every $R\in L$ we have
$(x_1,x_2,\ldots, x_{\arity{}})\in R^A\implies (f(x_1),f(x_2),\ldots,\allowbreak f(x_{\arity{}}))\in R^B$.
If $f$ is injective, then $f$ is called a \emph{monomorphism}. A monomorphism $f$ is an \emph{embedding} if the implication above is an equivalence.
  If $f$ is an embedding which is an inclusion then $\str{A}$ is a \emph{substructure}  of $\str{B}$. For an embedding $f:\str{A}\to \str{B}$ we say that $\str{A}$ is \emph{isomorphic} to $f(\str{A})$ and $f(\str{A})$ is also called a \emph{copy} of $\str{A}$ in $\str{B}$. 
  
 Finite structures will often be denoted by capital letters such as $A$, $B$, $C$ etc. and infinite structures by $M, N, \ldots$. We use the same notation for a structure and its domain and all substructures considered will be full  induced substructures. The automorphism group of a structure $M$ is denoted by $\Aut(M)$. 

\subsection{Fra\"{\i}ss\'e classes}\label{FCsec} Suppose $L$ is a first-order relational language. A countable $L$-structure $M$ is called \emph{(ultra)homogeneous} if isomorphisms between finite substructures extend to automorphisms of $M$. If $M$ is $\omega$-saturated, this is equivalent to the theory of $L$ having quantifier elimination. A homogeneous structure $M$ is characterised by its \emph{age}, the class $\Age(M)$ of isomorphism types of its finite substructures. This class satisfies the hereditary, joint embedding and (using the homogeneity) amalgamation properties. Conversely, if $\K$ is a class of countably many isomorphism types of finite $L$-structures which has these properties, then there is a countable homogeneous structure whose age is $\K$. All of this is the classical Fra\"{\i}ss\'e theory, initiated in \cite{Fr}. 

In one direction,  this result can be seen as a method for constructing homogeneous structures  from amalgamation classes of finite structures. There are several generalisations of this method and we shall state one of these, mostly following the presentation of Section 3 of \cite{BMPZ} and the notes \cite{DEBonn}. Essentially, we take \Fraisse's original construction, but instead of working with all substructures (equivalently, all embeddings between structures) we work with certain distinguished substructures, which we will call \emph{strong} substructures. Embeddings with strong substructures as their image will be called \emph{strong} embeddings.
Other generalisations are possible (though the basic structure of the proof is always the same). For example,  general category-theoretic versions of the Fra\"{\i}ss\'e construction can be found in \cite{DG} and \cite{Kubis}. 

\begin{definition} \rm \label{strongstrong}
Suppose $L$ is a first-order language and  $\K$ is a class of finite $L$-structures, closed under isomorphisms. Suppose $\S$ is a distinguished class  of embeddings between elements of $\K$. Assume that $\S$ is closed under composition and contains all isomorphisms. Furthermore, suppose that $(\K; \S)$ has the following property:
\begin{itemize}
\item[(*)]
whenever  $A, C \in \K$ and $f : A \to C$ is in $\S$ and $B \in \K$ is a substructure of $C$ which contains the image of $f$, then the  map $g : A \to B$ with $g(a) = f(a)$ for all $a \in A$ is in $\S$.
\end{itemize} 

Then we say that  $(\K; \S)$ is a \emph{strong class} and refer to the elements of $\S$ as \emph{strong embeddings}. If $A$ is a substructure of $B \in \K$ and the inclusion map $A \to B$ is in $\S$, then we say that $A$ is a \emph{strong substructure} of $B$ and write $A \leq B$. Thus an embedding $f : B \to C$ between structures in $\K$ is in $\S$ if and only if $f(B) \leq C$. Henceforth, we suppress the notation $\S$ and refer to the strong class as $(\K; \leq)$. We sometimes refer to the strong embeddings of this class as $\leq$-embeddings. In this notation the condition (*) says that if $A, B, C \in \K$ satisfy $A \leq C$ and $A \subseteq B \subseteq C$, then $A \leq B$.

If $\S$ consists of all embeddings between structures in $\K$, then we write $(\K; \subseteq)$ for the class, instead of $(\K; \leq)$.
\end{definition}

Note that if $A, B, C$ are in a strong class $(\K; \leq)$, then $A \leq A$ and $A \leq B \leq C$ implies that $A \leq C$. 

\begin{definition}\rm Suppose $(\K; \leq)$ is a strong class of finite $L$-structures and the $L$-structure $M = \bigcup_{i< \omega} A_i$ is the union of a chain of finite substructures $A_1 \leq A_2 \leq A_3 \leq \ldots$. If $A$ is a finite substructure of $M$, we write $A \leq M$ to mean that $A \leq A_i$ for some $i \leq \omega$, and say that $A$ is a strong substructure of $M$.
\end{definition}

\begin{remark}\rm It is important to note that the above definition does not depend on the choice of the sequence of $A_j$. Indeed, suppose also that $M$ is the union of the finite substructures $B_1 \leq B_2 \leq B_3 \leq \ldots$. Suppose $A \leq A_i$. There exist $j,k$ with  $A_i \subseteq B_j \subseteq A_k$; as $A_i \leq A_k$, the condition (*) implies that $A_i \leq B_j$ and so $A \leq B_j$. Note that this also shows that if $g \in \Aut(M)$ then $A \leq M$ if and only if $gA \leq M$. 

We also note that when we come to consider specific examples of strong classes, the definition of the strong embeddings will extend naturally to maps between arbitrary structures. We will usually omit the verification that the extension agrees with that in the above definition.
\end{remark}

\begin{definition} Suppose $(\K; \leq)$ is a strong class of finite $L$-structures. An increasing chain 
\[ A_0 \leq A_1 \leq A_2 \leq A_3 \leq \cdots\]
of structures in $\K$ is called a \emph{rich sequence} if: 
\begin{enumerate}
\item for all $A \in \K$ there is some $i < \omega$ and a strong embedding $A \to A_i$;
\item for all strong $f : A_i \to B$ there is $j \geq i$ and a strong $g: B \to A_j$ such that $g(f(a)) = a$ for all $a \in A_i$. 
\end{enumerate}
A \emph{Fra\"{\i}ss\'e limit} of $(\K; \leq)$ is an $L$-structure which is the union of a rich sequence of substructures.
\end{definition}

\begin{definition}  We say that the strong class $(\K; \leq)$ has the \emph{amalgamation property}  (for strong embeddings), \emph{AP} for short,
 if  whenever  $A_0, A_1, A_2$ are in $\K$ and $f_1: A_0 \to A_1$ and $f_2 : A_0 \to A_2$ are strong, there is $B \in \K$ and strong $g_i : A_i \to B$ (for $i = 1,2$) with $g_1\circ f_1 = g_2 \circ f_2$. The class has the \emph{joint embedding property}, \emph{JEP} for short,
 if for all $A_1, A_2 \in \K$ there is some $C \in \K$ and strong $f_1: A_1 \to C$ and $f_2: A_2 \to C$.
\end{definition}

If $\emptyset \in \K$ and $\emptyset \leq A$ for all $A \in \K$, then the JEP is a special case of the AP. 

\begin{theorem}\label{FHT} Suppose $(\K; \leq)$ is a strong class of $L$-structures which satisfies:
\begin{enumerate}
\item There are countably many isomorphism types of structures in $\K$.
\item The class $\K$ has the Joint Embedding and Amalgamation Properties with respect to strong embeddings.
\end{enumerate}
Then rich sequences exist and all Fra\"{\i}ss\'e limits are isomorphic. Moreover, if $M$ is a Fra\"{\i}ss\'e limit and $f : A \to B$ is an isomorphism between strong finite substructures of $M$, then $f$ extends to an automorphism of $M$.
\end{theorem}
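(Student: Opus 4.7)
The plan is to follow the classical Fra\"{\i}ss\'e argument, adapted to the strong setting: existence of rich sequences by an inductive construction using AP and JEP with a fair bookkeeping enumeration, then uniqueness and homogeneity by a back-and-forth argument driven by the richness property (2).

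For \emph{existence}, I would first observe that because there are only countably many isomorphism types in $\K$ and each structure is finite, the set of pairs $(A,f)$ with $A \in \K$ and $f : A \to B$ a strong embedding into some $B \in \K$ is countable up to the obvious notion of equivalence. Enumerate requirements of two kinds: (i) every $C \in \K$ should embed strongly into some $A_m$; (ii) for every strong $f : A_i \to B$ that can occur in the construction, some later $A_j$ should contain a strong copy of $B$ realising $f$. Build the chain $A_0 \leq A_1 \leq \cdots$ by starting with an arbitrary element of $\K$ and, at stage $n$, using a fair enumeration to address the next outstanding requirement: requirements of type (i) are resolved by a single application of JEP, and requirements of type (ii) are resolved by amalgamating $B$ with $A_n$ over $A_i$ using AP, with the strong embedding $A_i \to A_n$ obtained as the composition of the strong inclusions $A_i \leq A_{i+1} \leq \cdots \leq A_n$. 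Property (*) together with the transitivity of $\leq$ guarantees that the strong embeddings composed along the chain remain strong, so the resulting sequence really is rich.

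For \emph{uniqueness and homogeneity}, let $M = \bigcup_i A_i$ and $M' = \bigcup_i A_i'$ be two Fra\"{\i}ss\'e limits, and suppose $f_0 : A \to B$ is an isomorphism between strong finite substructures with $A \leq M$ and $B \leq M'$; for uniqueness we take $A = B = \emptyset$ (or any common starting point supplied by JEP), and for homogeneity we take $M = M'$ and start from the given $f$. Choose $i_0$ with $A \leq A_{i_0}$ and $B \leq A_{i_0}'$. Do a back-and-forth: at the ``forth'' step, given a strong embedding $\phi_n : A_{i_n} \to A_{j_n}' \leq M'$, apply richness condition (2) for the chain defining $M'$ to the strong inclusion $\phi_n : A_{i_n} \to \phi_n(A_{i_n}) \leq A_{j_n}'$ composed appropriately to extend $\phi_n^{-1}$ to a strong embedding from some $A_{j_{n+1}}' \supseteq A_{j_n}'$ back into an $A_{i_{n+1}} \geq A_{i_n}$, ensuring every vertex of $M'$ is eventually captured; the ``back'' step is symmetric in $M$. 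Arranging the enumeration so that every vertex of $M$ and $M'$ is dealt with in turn, the union of the compatible partial maps is an isomorphism $M \to M'$ extending $f_0$. For homogeneity, finite strong substructure isomorphisms thus extend to automorphisms.

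The \emph{main obstacle} is not conceptual but bookkeeping and verification that the relevant maps stay strong. The key delicate point is checking that when we amalgamate $B$ over $A_i$ into the existing chain using AP, the strong embedding $A_i \to A_n$ used for the amalgamation is obtained from the chain of strong inclusions, and that the resulting amalgam $A_{n+1}$ has $A_n$ as a strong substructure and contains a strong copy of $B$; the transitivity of $\leq$ (an immediate consequence of (*) noted in the text) and closure of $\S$ under composition do this automatically. A second subtle point is that in the back-and-forth one needs, at each stage, to apply condition (2) not to an arbitrary strong embedding between elements of $\K$ but to one whose domain appears in the chain defining $M'$; this is precisely the content of (2), so once the fair enumeration is set up the argument runs without further difficulty.
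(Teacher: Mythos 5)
The paper does not actually prove Theorem~\ref{FHT}: it is stated as the standard generalisation of \Fraisse's theorem to strong classes, with the proof deferred to the literature (Section~3 of \cite{BMPZ} and \cite{DEBonn}). Your proposal is exactly that standard argument, and in outline it is correct: existence of rich sequences by bookkeeping over the countably many requirements using JEP and AP, and uniqueness/homogeneity by back-and-forth driven by condition (2) of richness. Two small imprecisions are worth fixing in a final write-up. First, you have the two halves of the back-and-forth crossed: extending $\phi_n^{-1}$ so as to capture more of $M'$ (landing in $M$) uses richness of the chain defining $M$, while extending $\phi_n$ to a larger $A_{i_{n+1}}$ (landing in $M'$) uses richness of the chain defining $M'$; as written, your ``forth'' step invokes the wrong chain. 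Second, condition (2) only applies to strong maps whose domain is a term $A'_{j_n}$ of the chain, whereas at each stage you need to extend over the image $\phi_n(A_{i_n})$, which is merely a strong substructure of $A'_{j_n}$; one must first apply AP to the two strong embeddings $\phi_n(A_{i_n}) \to A'_{j_n}$ and $\phi_n(A_{i_n}) \to A_{i_{n+1}}$ and only then feed the resulting strong map out of $A'_{j_n}$ into condition (2). You gesture at this in your closing paragraph but do not carry out the reduction; it is routine, and with these repairs the proof is complete.
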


We refer to the last property in the above as \emph{$\leq$-homogeneity} (or strong-map homogeneity) and say that the  strong class $(\K; \leq)$ is an \emph{amalgamation class} (for strong maps) if conditions (1, 2) hold. Note that in the case where  $\K$ is closed under substructures and $\S$ consists of all embeddings between structures in $\K$, this result is the classical Fra\"{\i}ss\'e Theorem.

\medskip

\begin{remarks}\label{freeamcl} \rm Many of the examples of amalgamation classes $(\K; \leq)$ of relational structures in this paper will be \emph{free amalgamation classes}. If $A \leq B_1, B_2$ are structures in $\K$, then by the \emph{free amalgam} of $B_1$ and $B_2$ over $A$ we mean the structure $F$ whose domain is the disjoint union of $B_1$ and $B_2$ over $A$ and in which the relations $R^F$ (for $R$ in the language) are just the unions $R^{B_1}\cup R^{B_2}$ of the the relations on $B_1$ and $B_2$. If $F$ is always in $\K$ and $B_i \leq F$, then we say that $(\K; \leq)$ is a \emph{free amalgamation class}.
\end{remarks}

\begin{remarks}\label{34} Suppose $\K$ in Theorem~\ref{FHT} has only finitely many isomorphism types of structure of each finite size. Suppose also that there is a function $F : \N \to \N$ such that if $B \in \K$ and $A \subseteq B$ with $\lvert A \rvert \leq n$, then there is $C \leq B$ with $A \subseteq C$ and $\lvert C \rvert \leq F(n)$. Then the Fra\"{\i}ss\'e limit $M$ is $\omega$-categorical. 

To see this we note that $\Aut(M)$ has finitely many orbits on $M^n$. Indeed, by $\leq$-homogeneity there are finitely many orbits on $\{\c \in M^{F(n)} : \c \leq M\}$ and any $\a \in M^n$ can be extended to an element of this set.
\end{remarks}

Much of this paper will be concerned with expansions of Fra\"{\i}ss\'e limits, or their corresponding amalgamation classes. The following notions will be useful.

\begin{definition} \label{stex:def} Suppose that $L \subseteq L^+$ are first-order languages and $(\K^+ ; \leq^+)$, $(\K; \leq)$ are strong classes of finite $L^+$- and $L$-structures respectively. We say that $(\K^+; \leq^+)$ is a \emph{strong expansion} of $(\K; \leq)$ if $\K$ is the class of $L$-reducts of the structures in $\K^+$ and:
\begin{enumerate}
\item[(i)] whenever $f: A^+ \to B^+$ is a strong map in $(\K^+; \leq^+)$, the map between the $L$-reducts $f : A \to B$ is strong in $(\K; \leq)$;
\item[(ii)] if $g : A \to B$ is a strong map in $(\K; \leq)$ and $A^+ \in \K^+$ is an expansion of $A$, then there is an expansion $B^+$ of $B$ such that $g: A^+ \to B^+$ is strong in $(\K^+; \leq^+)$.
\end{enumerate}
\end{definition}

\begin{theorem} \label{sexpthm} Let $L \subseteq L^+$ be first-order languages. Suppose that $(\K^+ ; \leq^+)$ is an amalgamation class of finite $L^+$-structures which is a strong expansion of the strong class $(\K; \leq)$ of $L$-structures. Then $(\K; \leq)$ is an amalgamation class. Moreover, if $N$, $M$ denote the Fra\"{\i}ss\'e limits of $(\K^+; \leq^+)$ and $(\K; \leq)$ respectively, then the $L$-reduct of $N$ is isomorphic to $M$.
\end{theorem}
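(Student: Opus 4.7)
The plan is to prove the two assertions in sequence: first that $(\K; \leq)$ is an amalgamation class (so that its Fra\"{\i}ss\'e limit $M$ is defined), and then to identify the $L$-reduct of $N$ as a Fra\"{\i}ss\'e limit of $(\K; \leq)$, so that by the uniqueness part of Theorem~\ref{FHT} the two structures are isomorphic. The whole argument is a systematic transport of the relevant properties between $\K^+$ and $\K$ via conditions (i) and (ii) of Definition~\ref{stex:def}; condition (ii) is the nontrivial tool, as it allows one to lift a given strong $L$-embedding to a strong $L^+$-embedding once an $L^+$-expansion of the source has been fixed.

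For the first assertion, I note that $\K$ is the class of $L$-reducts of structures in $\K^+$, so since $\K^+$ has countably many isomorphism types, so does $\K$. To verify JEP in $(\K; \leq)$, given $A_1,A_2 \in \K$ I choose expansions $A_1^+, A_2^+ \in \K^+$ and apply JEP in $(\K^+;\leq^+)$ to obtain a common $C^+$ with strong $L^+$-embeddings $f_i^+ : A_i^+ \to C^+$; by (i) the $L$-reducts $f_i : A_i \to C$ are strong in $(\K;\leq)$. For AP, suppose $f_1 : A_0 \to A_1$ and $f_2 : A_0 \to A_2$ are strong in $(\K;\leq)$. I fix an expansion $A_0^+ \in \K^+$ of $A_0$; applying (ii) twice I obtain expansions $A_1^+, A_2^+ \in \K^+$ of $A_1,A_2$ making $f_1,f_2$ strong maps in $(\K^+;\leq^+)$. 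Amalgamating in $\K^+$ gives a common $C^+$ and strong $g_i^+ : A_i^+ \to C^+$ with $g_1^+ \circ f_1 = g_2^+ \circ f_2$; reducing to $L$ and applying (i) yields the required amalgam in $(\K;\leq)$.

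For the second assertion, write $N$ as the union of a rich sequence $A_0^+ \leq^+ A_1^+ \leq^+ \cdots$ in $(\K^+;\leq^+)$. By (i) the $L$-reducts form a chain $A_0 \leq A_1 \leq \cdots$ of strong substructures in $(\K;\leq)$, and I let $M' = \bigcup_i A_i$, the $L$-reduct of $N$. I verify that $A_0 \leq A_1 \leq \cdots$ is rich in $(\K;\leq)$: any $A \in \K$ is the $L$-reduct of some $A^+ \in \K^+$, and a strong embedding $A^+ \to A_i^+$ in $\K^+$ reduces by (i) to a strong embedding $A \to A_i$ in $\K$, giving richness condition (1). For richness condition (2), given a strong $f : A_i \to B$ in $(\K;\leq)$, I apply (ii) to the (already fixed) expansion $A_i^+$ of $A_i$ to obtain an expansion $B^+ \in \K^+$ of $B$ making $f : A_i^+ \to B^+$ strong in $(\K^+;\leq^+)$; richness of the original sequence in $N$ yields $j \geq i$ and a strong $g^+ : B^+ \to A_j^+$ with $g^+ \circ f = \id$, and reducing to $L$ via (i) finishes the verification. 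Hence $M'$ is a Fra\"{\i}ss\'e limit of $(\K;\leq)$ and so is isomorphic to $M$ by Theorem~\ref{FHT}.

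The potential obstacle is purely notational: one must be careful that the expansion $A_i^+$ produced in condition (ii) of the AP argument can be chosen to be the same on the shared piece $A_0^+$, and that in the richness argument the expansion $B^+$ provided by (ii) is compatible with the already fixed $A_i^+$. Both are immediate from the formulation of (ii), where the expansion of the source is given and an expansion of the target is produced, but they need to be read off carefully; no further combinatorial input is required.
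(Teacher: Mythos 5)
Your proposal is correct and follows essentially the same route as the paper: fix an expansion of the base, use condition (ii) to lift the two strong maps, amalgamate in $(\K^+;\leq^+)$, and pass to reducts; then observe that the $L$-reducts of a rich sequence form a rich sequence. The paper leaves the richness verification as "easily seen," so your explicit check of conditions (1) and (2) simply fills in what the paper omits.
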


\begin{proof} Suppose $f_i : A \to B_i$ are strong embeddings in $(\K; \leq)$, for $i = 1,2$. We can expand $A, B_1$ to structures $A^+, B_1^+$ in $\K^+$ so that $f_1 : A^+ \to B_1^+$ is strong. We can then expand $B_2$ to a structure $B_2^+$ in $\K^+$ so that $f_2 : A^+ \to B_2^+$ is strong. The amalgamation property in $(\K; \leq^+)$ gives that there exist $D^+ \in \K^+$ and strong $g_i : B_i^+ \to D^+$ with $g_1\circ f_1 = g_2\circ f_2$. Passing to the $L$-reducts, we obtain the amalgamation property for $(\K; \leq)$. So this is an amalgamation class.

Similarly, suppose $A_1^+ \leq^+ A_2^+ \leq A_3^+ \leq \cdots $ is a rich sequence for $(\K^+; \leq^+)$. Then the sequence of $L$-reducts $A_1\leq A_2 \leq A_3 \leq \cdots$ is easily seen to be rich for $(\K; \leq)$. The result follows.
\end{proof}

Examples of such strong expansions will be found in later sections (for instance, see Section~\ref{sec:C0}).

\subsection{Ramsey classes} \label{rcsec}

\medskip

Throughout this subsection, $L$ will be a first-order language and we work with strong classes $(\K; \leq)$ of finite $L$-structures as in Section~\ref{FCsec}. We shall say what it means for $(\K; \leq)$ to be a Ramsey class and in the next subsection,we state the KPT correspondence and associated results in this context. Similar statements (about special class of maps)
 can be found in the paper of Zucker \cite{Z} and in \cite{GKP}. In the case where $\K$ is closed under substructures and $\leq$ is just the usual notion of substructure, this is just the usual notion of Ramsey class and the KPT correspondence. The statements which we give can all be deduced from this case either by simple adaptations of  the proofs, or, more directly, by expanding the language in a suitable way. Indeed, the latter is the approach taken by the second and third authors in \cite{HN}, where the results are stated for classes of structures with closures. This latter paper is the main source of the Ramsey results which we will use in the later sections, so we will state its results in detail.

Suppose $L$ is a first-order language and that $(\K; \leq)$ is a strong class of finite $L$-structures. For $A, B \in \K$ we denote by $\binom{B}{A}$ the set of all \emph{strong} substructures of $B$ which are isomorphic to $A$. Note that by the condition (*) in Definition~\ref{strongstrong}, if $B \leq C \in \K$ and $A \in \K$, then $\binom{B}{A} = \binom{C}{A}\cap \mathcal{P}(B)$, that is, the strong copies of $A$ in $B$ are precisely the strong copies of $A$ in $C$ whose domains are subsets of $B$.

\smallskip

We say that $(\K; \leq)$ is a \emph{Ramsey class} if it is a strong class and for all $r \in \N$ and all $A, B \in \K$, there is a structure $C \geq B$ in $\K$ such that the following \emph{Ramsey property} holds: whenever $\binom{C}{A}$ is partitioned into $r$ classes (`colours'), there is $B' \in \binom{C}{B}$ such that the elements of $\binom{B'}{A}$ all lie in the same class (that is, they have the same colour). In this case, we write:
\[ C \longrightarrow (B)_r^A.\]
Note that here we are restricting to strong substructures throughout (without incorporating this into the notation) and it is of course enough to consider this in the case $r=2$. 

More generally, we say that $A \in \K$ has \emph{finite Ramsey degree} if there is a natural number $k$ such that for all $B \in \K$ with $A \leq B$ and all $r \in \N$,  there is $C \geq B$ in $\K$ such that whenever $\binom{C}{A}$ is coloured with $r$ colours, there is $B' \in \binom{C}{B}$ such that $\binom{B'}{A}$ is coloured with at most $k$ colours. The least such $k$ is then the \textit{Ramsey degree} of $A$ in $(\K; \leq)$. Note that if this is equal to 1 for all $A \in \K$, then $(\K; \leq)$ has the Ramsey property.

The Ramsey property is sometimes defined with respect to colourings of embeddings from $A$ to $B$ and $C$. If the structures in $\K$ are \emph{rigid} (that is, have trivial automorphism group), then there is no difference between these notions. This is the case if, for example, each structure in $\K$ has a linear ordering as part of the structure.

 In the case where all substructures are strong, it is a well known observation of the third author  (cf. \cite{N89}, for example) that (under mild extra conditions) Ramsey classes are amalgamation classes. We note that the usual argument also applies in our current context (of strong maps).

\begin{theorem} \label{RimpliesAP} Suppose that $L$ is a first-order language and $(\K; \leq)$ is a Ramsey class of finite, rigid $L$-structures with the joint embedding property. Then $(\K; \leq)$ has the amalgamation property.
\end{theorem}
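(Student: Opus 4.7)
The plan is the classical Nešetřil argument for deducing amalgamation from the Ramsey property, transferred to the strong-map setting. Suppose I am given strong embeddings $f_1 \colon A_0 \to A_1$ and $f_2 \colon A_0 \to A_2$ in $(\K; \leq)$, and I want to produce an amalgam. By JEP I first choose $D \in \K$ together with strong embeddings $e_1 \colon A_1 \to D$ and $e_2 \colon A_2 \to D$; using closure of strong maps under composition, both $e_1 \circ f_1$ and $e_2 \circ f_2$ identify $A_0$ with strong substructures of $D$. The amalgam will be built inside a Ramsey object $C \geq D$ with $C \longrightarrow (D)_2^{A_0}$, provided by the Ramsey property applied to $A = A_0$, $B = D$, and $r = 2$.

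The colouring is the heart of the argument. For each $A_0' \in \binom{C}{A_0}$, the rigidity of $A_0$ supplies a unique isomorphism $\alpha_{A_0'} \colon A_0 \to A_0'$. I declare $A_0'$ \emph{red} precisely when there exists a strong embedding $\beta \colon A_1 \to C$ with $\beta \circ f_1 = \alpha_{A_0'}$, and \emph{blue} otherwise. Rigidity is exactly what makes this a well-defined function on strong copies rather than on embeddings. Applying the Ramsey property yields some $D' \in \binom{C}{D}$ on which the colouring is constant.

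To identify the monochromatic colour, let $\phi \colon D \to D'$ be the unique isomorphism (rigidity of $D$). Since $D' \leq C$, the map $\phi \colon D \to C$ is strong, so $\phi \circ e_1 \colon A_1 \to C$ is also strong, and it witnesses that the strong copy $\phi(e_1(f_1(A_0))) \in \binom{D'}{A_0}$ is red. Hence every element of $\binom{D'}{A_0}$ is red; in particular the copy $A_0^\star := \phi(e_2(f_2(A_0)))$ is red. Unpacking redness for $A_0^\star$ produces a strong embedding $\beta \colon A_1 \to C$ with $\beta \circ f_1 = \alpha_{A_0^\star} = \phi \circ e_2 \circ f_2$. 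Taking $B = C$, $g_1 = \beta$, and $g_2 = \phi \circ e_2$ (both strong, using composition of strong maps and $D' \leq C$) gives $g_1 \circ f_1 = g_2 \circ f_2$, completing the amalgamation.

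The only subtle point, and the only genuine obstacle in the argument, is the systematic use of rigidity to pass between strong copies $A_0' \in \binom{C}{A_0}$ and strong embeddings $A_0 \to C$: without rigidity, the two-colouring on $\binom{C}{A_0}$ is not canonical, because the same copy of $A_0$ could be hit by several different embeddings of $A_0$, some extendable to $A_1$ in a manner compatible with $f_1$ and some not. Everything else reduces to checking that the relevant compositions remain strong, which is immediate from the axioms of a strong class (closure under composition and condition (*)).
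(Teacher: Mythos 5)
Your proof is correct and takes essentially the same route as the paper's: JEP produces a joint extension $D$ of $A_1, A_2$, the Ramsey property supplies $C$ with $C \longrightarrow (D)_2^{A_0}$, and a monochromatic copy of $D$ forces the amalgam. The only divergence is in the colouring --- you declare a copy red when its canonical (rigidity-given) embedding extends to a strong embedding of $A_1$ compatible with $f_1$, whereas the paper colours by mere containment in a strong copy of $B_1$; your sharper version tracks the position of $A_0$ inside $A_1$ explicitly and is, if anything, the more careful formulation.
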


\begin{proof} Let $f_i : A \to B_i$ be strong embeddings forming our amalgamation problem. As the structures are rigid, it is enough to find $E \in \K$ which contains strong copies of $B_1, B_2$ having a copy of $A$ as a common strong substructure. 

There is some $D \in \K$ which contains strong copies of $B_1$ and $B_2$ (using JEP). Find $E \in \K$ with $E \to (D)_2^A$. Colour the elements of $\binom{E}{A}$ according to whether or not they are contained in a strong copy of $B_1$ in $E$. There is a monochrome copy $D'$ of $D$. As it contains a strong copy of $B_1$, all the strong copies of $A$ in it are in a strong copy of $B_1$ in $E$. But this includes the $A$ which is in the copy of $B_2$ in $D'$.
\end{proof}

We now state, using this terminology, the general Ramsey result which we need. While the original \Fraisse{} construction (where all embeddings are strong) generalises naturally to strong amalgamation
classes, it is the essence of our examples to show that this is not the case for the construction
of Ramsey objects. For this reason, the papers~\cite{HN,EHN} use an alternative approach, representing strong substructures by means of functions which are part of the structures themselves instead of by an external family of strong embeddings. We review the main
terminology and results of \cite{EHN} (using several results of \cite{HN}) which will be needed here.

First we introduce a notion of structure involving functions in addition to
relational symbols. Unlike the usual model-theoretic functions, functions used
here are partial, multi-valued and symmetric. Partial functions allow the easy
definition of free amalgamation classes and the symmetry makes it possible to
explicitly represent strong embeddings within the structure itself, while
keeping all automorphisms of the original structure.

Let $L=L_\mathcal R\cup L_\mathcal F$ be a language involving relational
symbols $\rel{}{}\in L_\mathcal R$ and function symbols $F\in L_\mathcal F$
each having associated \emph{arities} denoted by $\arity{}$ for relations and
\emph{domain arity}, $d(F)$, \emph{range arity}, $r(F)$, for functions.  Denote
by $A\choose n$ the set of all subsets of $A$ consisting of $n$ elements.
An \emph{$L$-structure} $\str{A}$ is a structure with \emph{domain} $A$,
functions $F^A:\dom(F^A)\to {A\choose {r(F)}}$, $\dom(F^A)\subseteq A^{d(F)}$,
$F\in L_\mathcal F$ and relations $R^A\subseteq A^{\arity{}}$, $R\in
L_\mathcal R$.  The set $\dom(F)$ is called the \emph{domain} of the function $F$  in
$\str{A}$.

Given two $L$-structures $A$ and $B$, we say that $A$ is a \emph{substructure}
of $B$ and write $A\subseteq B$ if the following holds:
\begin{enumerate}
 \item the domain of $A$ is a subset of domain of $B$,
 \item for every relation $R\in L_\mathcal R$ it holds that $R^A$ is the restriction of $R^B$ to $A$, and,
 \item for every function $F\in L_\mathcal F$ it holds that $\dom(F^A)$ is the restriction of $\dom(F^B)$ to $A$
and moreover for every $t\in \dom(F^A)$ it holds that $F^A(t)=F^B(t)$.
\end{enumerate}
Embeddings are defined analogously (a substructure then expresses the fact that inclusion is an embedding).

If $A, B_1, B_2$ are $L$-structures and $\alpha_i : A \to B_i$ are embeddings, then an $L$-structure $C$ together with embeddings $\beta_i : B_i \to C$ is called an \emph{amalgamation} of $B_1$ and $B_2$ over $A$ if $\beta_1(\alpha_1(a)) = \beta_2(\alpha_2(a))$ for all $a \in A$. It is a \emph{free amalgamation} of $B_1$ and $B_2$ over $A$ if $\beta_1(b_1) = \beta_2(b_2)$  only if $b_i \in \alpha_i(A)$  and moreover there are no tuples in any relations $R^C$ of $C$ and no tuples in $\dom(F^C)$ (with $R \in L_\mathcal{R}$ and $F \in L_\mathcal{F}$)  using vertices of both $\beta_1(B_1\setminus \alpha_1(A))$ and $\beta_2(B_2\setminus \alpha_2(A))$, and $C = \beta_1(B_1)\cup \beta_2(B_2)$. 

In the case where $L$ consists only of relation symbols, note that this coincides with the usual notion of free amalgamation (as in Remarks~\ref{freeamcl}). 

Suppose now that $(\K_0; \leq)$ is a strong class of $L_0$-structures. Suppose, moreover, that strong substructures are closed under intersections (and therefore there is an associated notion of closure).  Then there is a standard way to turn this class into amalgamation class $(\K,\subseteq)$ which is closed for substructures:  We can expand $L_0$ to a language $L$ by adding partial functions $F_{k,n}$, for every $1\leq k\leq n$, from $k$-tuples to subsets of size $n$.  On a structure $A \in \K_0$, map every set of elements $S\subseteq A$ to the smallest strong substructure $B$ of $A$ containing $S$, that is $F_{|S|,|B|}(S)=B$ and leave the functions undefined otherwise. Note that doing this does not affect the automorphisms of $A$.  The resulting class $(\K; \subseteq)$ is then a strong class (where strong maps are all embeddings) and if $(\K_0; \leq)$ is an amalgamation class, then so is $(\K; \subseteq)$.

Observe that even if $(\K_0; \leq)$ is a free amalgamation class, then $(\K; \subseteq)$ constructed in this standard way is not necessarily a free amalgamation class. 
However in cases discussed here we will be able to omit some of the functions to obtain $(\K';\subseteq)$ which is closed for free amalgamation.
This will allow us, in Section~\ref{sec:C0sec} to apply the following theorem to show that such a class has an easy Ramsey expansion.

Given a class $\K$ of $L$-structures, denote by $\K^\prec$ the class
of all structures $\ordpair{A;\prec}$ where $A\in \K$ and $\prec$ is a linear
ordering of the domain of $A$. The following is a combination of Theorems 1.3 and 1.4
of \cite{EHN} which will be applied in Section~\ref{sec:DFramsey}. The \emph{Expansion Property} is defined in Definition~\ref{EP:def} below.
\begin{theorem}
\label{thm:Ramseyclosures}
Let $L$ be a language (involving relational symbols and partial functions) and let 
$(\K,\subseteq)$ be a free amalgamation class.
  Then $(\K^\prec,\subseteq)$ is a Ramsey class and moreover there exists a
Ramsey class $\mathcal O\subseteq \K^\prec$ such that 
$(\mathcal O,\subseteq)$ is a strong expansion of $(\K,\subseteq)$ having the
Expansion Property with respect to $(\K,\subseteq)$.
\end{theorem}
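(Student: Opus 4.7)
The plan is to reduce the theorem to the classical Ne\v set\v ril--R\"odl theorem for ordered relational structures with free amalgamation. First, I would encode the language $L=L_\mathcal{R}\cup L_\mathcal{F}$ as a purely relational language $L'$: replace each function symbol $F$ of domain arity $d=d(F)$ and range arity $r=r(F)$ by a relation symbol $R_F$ of arity $d+r$, interpreted on an $L$-structure $A$ so that $R_F^{A}(\bar a,\bar b)$ holds iff $\bar a\in\dom(F^A)$ and $\{b_1,\ldots,b_r\}=F^A(\bar a)$. This gives an injection from $\K$ to a class $\K'$ of finite $L'$-structures under which substructures, embeddings, isomorphism types, and free amalgams all correspond. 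In particular, $\K'$ is a free amalgamation class of relational $L'$-structures.

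Second, I would apply the Ne\v set\v ril--R\"odl theorem (in the ordered free amalgamation version) to $(\K')^\prec$, obtaining the Ramsey property for embeddings in that class. Translating back through the bijection $\K\leftrightarrow\K'$ yields that $(\K^\prec,\subseteq)$ is a Ramsey class; rigidity of ordered structures ensures that Ramsey statements for embeddings and for substructures agree, which is what the notation $\binom{\cdot}{\cdot}$ in the definition of Ramsey class requires.

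For the second assertion, I would construct $\mathcal{O}\subseteq\K^\prec$ using the standard technique that extracts an Expansion Property subclass from a Ramsey class. The natural first candidate is $\mathcal{O}=\K^\prec$ itself, which is easily seen to be a strong expansion of $(\K,\subseteq)$ in the sense of Definition~\ref{stex:def}: any linear order on an $L$-reduct lifts to a structure in $\K^\prec$, and strong (here: $\subseteq$-) maps are preserved in both directions. To guarantee EP, I would restrict $\mathcal{O}$ by imposing canonical order-constraints on the placement of certain marked substructures; EP is then verified by the usual Ramsey-degree argument, namely given $(A,\prec_A)\in\mathcal{O}$ one picks $B\in\K$ containing every $\prec$-expansion of every $A$-sized piece of it, and uses the Ramsey property established above to find $C\supseteq B$ such that every ordering of $C$ lying in $\mathcal{O}$ induces a copy of $(A,\prec_A)$.

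The main obstacle is the careful bookkeeping around the partial, set-valued, symmetric functions: one has to check that free amalgamation in $L$ really translates cleanly into free amalgamation in $L'$ (the awkward case being when a single function value is a set of elements straddling the amalgamation factors, which the definition of free amalgamation in the excerpt is designed to forbid), and that the subclass $\mathcal{O}$ chosen for EP remains a strong expansion in the functional language. A secondary difficulty is pinning down $\mathcal{O}$ precisely enough to both preserve Ramseyness (inherited from $\K^\prec$) and force EP, rather than merely satisfying one of the two conditions.
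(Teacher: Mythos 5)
The paper does not actually prove this statement: it is imported as ``a combination of Theorems 1.3 and 1.4 of \cite{EHN}'', so there is no internal argument to compare yours against. Judged on its own terms, your proposal has a genuine gap at its central step. You claim that replacing each function symbol $F$ by a relation $R_F$ of arity $d(F)+r(F)$ gives a correspondence under which ``substructures, embeddings, isomorphism types, and free amalgams all correspond.'' Substructures do \emph{not} correspond. In the functional language, $B\subseteq A$ is a substructure only if $B$ is closed under the functions: whenever a tuple $t$ from $B$ lies in $\dom(F^A)$, the entire set $F^A(t)$ must be contained in $B$. In the relational encoding, every subset induces a substructure, including subsets from which function values escape, and such a subset can still be isomorphic, as a relational structure, to the encoding of some structure in $\K$. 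Hence the set of relational copies of $A$ in $C$ is strictly larger than the set of function-closed copies, and --- more damagingly --- the monochromatic copy of $B$ delivered by the Ne\v set\v ril--R\"odl theorem for the relational class is only a relational copy and need not be function-closed in $C$, so it need not be an element of $\binom{C}{B}$ in the sense the theorem requires. This is not bookkeeping: it is precisely the obstruction that forced the development of the ``Ramsey classes with closures'' machinery in \cite{HN,EHN}, and the present paper explicitly remarks that the na\"{\i}ve generalisation ``is not the case for the construction of Ramsey objects'' and that the expansion of $\G_F$ ``needs the full power of \cite{HN}'' because of the non-unary functions.

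A secondary issue concerns the Expansion Property: one cannot in general impose arbitrary order-constraints; the subclasses of \cite{EHN} are cut out by special \emph{admissible} orderings that interact correctly with the closures, and verifying that such a subclass is simultaneously Ramsey, a strong expansion, and has EP is itself nontrivial. Your outline acknowledges the choice of $\mathcal O$ as a difficulty but offers no concrete candidate beyond $\K^\prec$ itself, which in general fails EP for classes with closures.
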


\subsection{The KPT correspondence} \label{KPTsec} The fundamental connection between Ramsey classes and topological dynamics is the following result of Kechris, Pestov and Todor\v cevi\'c
which we formulate in the following way in the context of strong maps. Recall that an amalgamation class $(\K; \leq)$ is a strong class satisfying the conditions (1,2) of Theorem~\ref{FHT}.

\begin{theorem}[\cite{KPT}, Theorem 4.8; \cite{NVT}, Theorem 1]
\label{thm:KPT}
 Let $L$ be a first-order language and $(\K; \leq)$ an amalgamation class of finite, rigid $L$-structures. Let $N$ be the Fra\"{\i}ss\'e limit of the class. Then $\Aut(N)$ is extremely amenable if and only if $(\K; \leq)$ is a Ramsey class.
\end{theorem}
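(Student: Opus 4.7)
The plan is to adapt the classical KPT argument to the strong-class setting. Write $G = \Aut(N)$, equipped with the topology of pointwise convergence, so a basis of neighbourhoods of the identity consists of the pointwise stabilisers $G_A = \{g \in G : g|_A = \id_A\}$ for finite $A \leq N$. By $\leq$-homogeneity of $N$ (Theorem~\ref{FHT}), combined with rigidity of the structures in $\K$, the assignment $gG_A \mapsto g(A)$ is a well-defined bijection $G/G_A \to \binom{N}{A}$ once a base strong copy $A \leq N$ has been fixed: any two $g, g'$ sending $A$ to the same strong copy must agree on $A$ by rigidity. This bridge is what permits Ramsey colourings of $\binom{N}{A}$ and dynamical statements about $G$ to speak to each other.

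For the easy direction, extremely amenable $\Rightarrow$ Ramsey, I would fix $A\leq B$ in $\K$ and $r \in \N$, and first reduce (by a standard compactness argument exploiting that $N$ is $\K$-universal) to showing that for every $r$-colouring $c: \binom{N}{A} \to [r]$ there exists $B' \in \binom{N}{B}$ with $c$ constant on $\binom{B'}{A}$. The compact Hausdorff space $[r]^{\binom{N}{A}}$ carries the continuous $G$-action $(g\cdot d)(A') = d(g^{-1}A')$; let $Y = \overline{G\cdot c}$ be the resulting flow. Extreme amenability supplies a fixed point $c^* \in Y$, which is forced to be constant (with some value $k$) by transitivity of $G$ on $\binom{N}{A}$. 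Applying density of $G\cdot c$ in $Y$ to the finite test set $\binom{B'}{A}$, for any chosen $B' \in \binom{N}{B}$, yields $g \in G$ with $c(g^{-1}A') = k$ on all $A' \in \binom{B'}{A}$; then $g^{-1}B'$ is the desired monochromatic strong copy of $B$.

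For the harder direction, Ramsey $\Rightarrow$ extremely amenable, I would let $X$ be a $G$-flow, pass to a minimal subflow by Zorn, and aim to produce a $G$-fixed point by showing that for every $x_0 \in X$ and every finite open cover $\mathcal{U} = \{U_1, \ldots, U_r\}$ of $X$ there is a point whose $G$-orbit sits in a single $U_i$, so that compactness of $X$ across finer and finer covers yields a fixed point. Continuity of the action together with compactness of $X$ should let me choose a single finite $A_0 \leq N$ such that for every $y \in X$ the set $G_{A_0}\cdot y$ lies in some member of $\mathcal{U}$. This produces a well-defined $r$-colouring of $\binom{N}{A_0}$: send $A'$ to the index $i$ with $g_{A'}\cdot x_0 \in U_i$, where $g_{A'} \in G$ is any automorphism sending the base copy to $A'$ (two such choices differ by an element of $G_{A_0}$, so they give the same $i$). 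For any $B \geq A_0$ in $\K$, the Ramsey property supplies $C \geq B$ with $C \to (B)^{A_0}_r$; embedding $C$ strongly into $N$ and applying the colouring gives a monochromatic strong copy $B' \leq N$ of $B$, encoding an approximate fixed point. A standard Tychonoff argument over all such $B$ and all covers $\mathcal{U}$ then extracts a genuine $G$-fixed point from these approximations.

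The main obstacle will be the uniform choice of $A_0$ in the second direction: naive continuity only produces, for each $y \in X$ individually, a finite $A_y \leq N$ with $G_{A_y}\cdot y$ small in $\mathcal{U}$, and one must bootstrap this pointwise statement to a single finite strong substructure $A_0$ that works for every $y$ at once. The fix is to use compactness of $X$ to reduce to finitely many test points $y_1, \ldots, y_k$, and then to amalgamate the associated $A_{y_1}, \ldots, A_{y_k} \leq N$ inside some $A_0 \leq N$ by iterating the amalgamation property for $(\K; \leq)$; condition~(*) of Definition~\ref{strongstrong} then guarantees $A_{y_j} \leq A_0$ for each $j$, hence $G_{A_0} \subseteq G_{A_{y_j}}$ and $A_0$ controls every $y_j$ simultaneously. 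Beyond this point, the argument is a routine strong-map adaptation of \cite{KPT, NVT}, the only additional care being that every copy appearing in the argument must be strong---which matches how the colouring of $\binom{N}{A_0}$ is defined via extensions to full automorphisms of $N$, since these preserve strongness.
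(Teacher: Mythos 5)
The paper does not actually prove this statement: it is quoted from \cite{KPT,NVT}, with the remark in Section 2 that the strong-class version follows from the classical one either by adapting the proofs or by encoding strongness into the language via functions. Your plan is the direct adaptation. The easy direction (extreme amenability implies Ramsey), the compactness reduction to colourings of $\binom{N}{A}$, and the identification $G/G_{A_0}\cong\binom{N}{A_0}$ via rigidity and $\leq$-homogeneity are all correct, as is your resolution of the ``uniform $A_0$'' point (though in $N$ one simply takes a large enough term of a rich sequence containing all the $A_{y_j}$ and invokes condition (*); no amalgamation is needed).

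There is, however, a genuine gap in the hard direction: the colouring of $\binom{N}{A_0}$ you define is not well defined, and the parenthetical justification is false. You choose $A_0$ so that for every $y\in X$ the set $G_{A_0}\cdot y$ lies in a single member of $\mathcal{U}$, and then colour $A'$ by the index $i$ with $g_{A'}\cdot x_0\in U_i$, where $g_{A'}(A_0)=A'$. Two such choices are $g$ and $gh$ with $h\in G_{A_0}$; the Lebesgue-number property tells you that $x_0$ and $hx_0$ lie in a common $U_{i_0}$, but their images $gx_0$ and $ghx_0$ under $g$ need not lie in a common member of $\mathcal{U}$, since the cover is not $G$-invariant. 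This is the standard left/right uniformity issue: the orbit map $g\mapsto gx_0$ is uniformly continuous for the \emph{right} uniformity, so it is the right cosets $G_{A_0}g$ (equivalently, the sets $G_{A_0}gx_0$) that $\mathcal{U}$ controls, whereas $\binom{N}{A_0}$ corresponds to \emph{left} cosets $gG_{A_0}$. The fix is to insert an inverse: colour $A'$ by the member of $\mathcal{U}$ containing $G_{A_0}\,g_{A'}^{-1}x_0$, which is well defined because $G_{A_0}(g_{A'}h)^{-1}=G_{A_0}g_{A'}^{-1}$ for $h\in G_{A_0}$. A monochromatic strong copy $B'=gB$ then yields a point $y=g^{-1}x_0$ with $F\cdot y$ contained in a single $U_i$ for a prescribed finite $F\subseteq G$; it is this finite approximate statement --- not ``a point whose whole $G$-orbit sits in a single $U_i$,'' which you cannot establish for a fixed cover --- that feeds the closing compactness argument. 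With these corrections your argument goes through as in \cite{KPT,NVT}.
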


Now we modify this for expansions.
\begin{definition} \label{def:reasonable} Suppose that $L \subseteq L^+$ are first-order languages and $L^+\setminus L$ consists of relation symbols. Let $(\K; \leq)$ be an amalgamation class of finite $L$-structures  with Fra\"{\i}ss\'e limit $M$. Suppose that $\D$ is a class of finite $L^+$-structures with the properties:
\begin{enumerate}
\item[(i)] the class of $L$-reducts of $\D$ is $\K$;
\item[(ii)] each structure in $\K$ has finitely many isomorphism types of expansions in $\D$;
\item[(iii)] if $B^+ \in \D$ and $A^+$ is a substructure of $B^+$ with $A^+\leq B^+$ (that is, the corresponding $L$-reducts $A, B$ satisfy $A \leq B$), then $A^+ \in \D$; 
\item[(iv)] if $f: A \to B$ is strong in $(\K; \leq)$ and $A^+\in \D$ is an expansion of $A$, then there is an expansion $B^+ \in \D$ of $B$ such that $f : A^+ \to B^+$ is an embedding.
\end{enumerate}
Then we say that  $\D$ as a \emph{reasonable class} of expansions of $(\K; \leq)$. 
\end{definition} 

The above terminology follows  \cite{Z}. Note however that we include (ii) as part of the definition rather than referring to it as `precompactness'. If $(\K^+; \leq^+)$ is a strong expansion of $(\K; \leq)$ as in Definition \ref{stex:def}, then $\K^+$ satisfies conditions (i) and (iv) in the above. If additionally $\K^+$ is a reasonable class of expansions of $(\K; \leq)$ (that is, it also satisfies (ii), (iii) in the above), then we refer to $(\K^+; \leq^+)$ as a \textit{reasonable, strong expansion} of $(\K; \leq)$. 

\medskip

 Suppose that $\D$ is a reasonable class of expansions of $(\K; \leq)$ as in the above definition and $M$ is the Fra\"{\i}ss\'e limit of $(\K; \leq)$. We shall consider the set $X(\D)$ of $L^+$-expansions $M^+$ of $M$ which have the property that for every finite $A \leq M$, the $L^+$-structure $A^{M^+}$ induced on $A$ in $M^+$ is in the class $\D$. 
This is a topological space where a basic open set is given by considering the expansions $M^+$ in which $A^{M^+}$ is a fixed structure in $\D$ (for a finite $A \leq M$). The property (ii) in the definition implies that $X(\D)$ is compact and it follows from properties (iii) and (iv) that $X(\D)$ is non-empty. Note that if  $L^+\setminus L$ consists of finitely many relation symbols $R_i$ of arities $n_i$ (for $i \leq m$), then $X(\D)$ can be identified with a subset of $\prod_{i\leq m} M^{n_i}$ with the product topology. In general,  $X(\D)$ embeds in an inverse limit of such spaces, by the  property (ii) in Definition~\ref{def:reasonable}.

With this notation, we have the following, summarising the above and statements in \cite{KPT,NVT}:

\begin{theorem} \label{XDthm} Suppose $\D$ is a reasonable class of $L^+$-expansions of the amalgamation class $(\K; \leq)$ of finite $L$-structures. Let $M$ denote the Fra\"{\i}ss\'e limit of $(\K; \leq)$. 
Then the space $X(\D)$ is a non-empty, compact space on which $\Aut(M)$ acts continuously.\hfill$\Box$
\end{theorem}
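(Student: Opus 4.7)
The proof naturally splits into three parts: $X(\D)$ is non-empty, $X(\D)$ is compact, and $\Aut(M)$ acts continuously on it. The plan is to build a point of $X(\D)$ by induction along a rich sequence for $(\K;\leq)$, then realise $X(\D)$ as a closed subspace of a Tychonoff product of finite discrete spaces, and finally verify continuity of the action using the fact that the topology is generated by restrictions to finite strong substructures.

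For non-emptiness, I would fix a rich sequence $A_1 \leq A_2 \leq \cdots$ whose union is $M$, which exists by Theorem~\ref{FHT} since $(\K;\leq)$ is an amalgamation class. Using (i), pick any $A_1^+ \in \D$ whose $L$-reduct is $A_1$. Then (iv), applied to the strong inclusion $A_i \hookrightarrow A_{i+1}$, provides inductively $A_{i+1}^+ \in \D$ expanding $A_{i+1}$ such that $A_i^+$ is a substructure of $A_{i+1}^+$. The union $M^+ = \bigcup_i A_i^+$ is an $L^+$-expansion of $M$. To check $M^+ \in X(\D)$, take any finite $A \leq M$; by the remark following the definition of $A \leq M$, one may choose $i$ with $A \subseteq A_i$ and $A \leq A_i$. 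The induced structure $A^{M^+}$ then coincides with $A^{A_i^+}$ and has $L$-reduct a $\leq$-substructure of $A_i$, so condition (iii) yields $A^{M^+} \in \D$.

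For compactness, identify each $L^+$-expansion of $M$ with its tuple of relations $(R^{M^+})_{R \in L^+\setminus L}$, viewed as a point of $Y = \prod_{R \in L^+\setminus L} \{0,1\}^{M^{n_R}}$, where $n_R$ denotes the arity of $R$. By Tychonoff's theorem $Y$ is compact. For each finite $A \leq M$, condition (ii) gives a finite set $\D_A$ of expansions of $A$ in $\D$, and the set $U_A = \{M^+ \in Y : A^{M^+} \in \D_A\}$ depends only on coordinates indexed by tuples from the finite set $A$, so it is clopen in $Y$. Hence $X(\D) = \bigcap_{A \leq M \text{ finite}} U_A$ is a closed, compact subset of $Y$.

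For the action, given $g \in \Aut(M)$ and $M^+ \in X(\D)$, define $g\cdot M^+$ by $R^{g\cdot M^+} = g(R^{M^+})$ for each $R \in L^+\setminus L$. To see $g \cdot M^+ \in X(\D)$, note that for finite $A \leq M$ we have $g^{-1}A \leq M$ (again by the remark following the definition of $A\leq M$), so $(g^{-1}A)^{M^+} \in \D$; the restriction of $g$ gives an isomorphism onto $A^{g\cdot M^+}$, placing it in $\D$. Continuity of the action then follows from the description of the topology by finite coordinates, since the preimage under $g$ of a basic clopen $\{N^+ : A^{N^+} = A^+\}$ is $\{M^+ : (g^{-1}A)^{M^+} = g^{-1}A^+\}$, which is again clopen. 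The only step requiring real care is non-emptiness, where one must coordinate the inductive extension by (iv) with the verification via (iii) that all strong finite substructures of $M$, not just those appearing in the chain, inherit $\D$-expansions; the other two parts reduce to Tychonoff's theorem and standard facts about pointwise-convergence topologies for automorphism actions on spaces of expansions.
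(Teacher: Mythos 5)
Your argument is correct and takes essentially the same route as the paper, which leaves this proof to the reader: non-emptiness by iterating condition (iv) along a rich sequence and then invoking (iii) for arbitrary finite strong substructures, compactness from (ii) via Tychonoff, and continuity of the action from the finite-coordinate description of the topology. The only caveat is that when $L^+\setminus L$ is infinite the sets $U_A$ are closed rather than clopen in $Y$ (closedness is all the compactness argument needs), which is consistent with the paper's remark that $X(\D)$ then embeds in an inverse limit of finite-language spaces rather than a single product.
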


\begin{lemma} \label{reasonable}  Let $(\K; \leq)$ be an amalgamation class with Fra\"{\i}ss\'e limit $M$ and $G = \Aut(M)$. Let $\D$ be a reasonable class of expansions of $(\K; \leq)$ and suppose $Y \subseteq X(\D)$ is a subflow of the $G$-flow $X(\D)$. Then there is $\D_1 \subseteq \D$ which is a reasonable class of expansions of $(\K; \leq)$ such that $Y = X(\D_1)$. 
\end{lemma}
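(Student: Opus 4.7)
The natural candidate is to let $\D_1$ be the class of all (isomorphism types of) finite $L^+$-structures of the form $A^{M^+}$, where $A \leq M$ is finite and $M^+ \in Y$. I would then verify both that $\D_1$ is a reasonable class of expansions of $(\K; \leq)$ and that $X(\D_1) = Y$.

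For reasonableness, (i) is immediate since $\D_1 \subseteq \D$. (ii) follows because $\D$ already has only finitely many expansions per $L$-structure in $\K$. (iii) follows because if $B^+ \in \D_1$ is witnessed by $B \leq M$ and $M^+ \in Y$, and $A^+ \leq^+ B^+$ corresponds to some $A \subseteq B$, then $A \leq B \leq M$ gives $A \leq M$ and $A^+ = A^{M^+} \in \D_1$. The main point is (iv): given a strong embedding $f : A \to B$ in $(\K; \leq)$ and an expansion $A^+ \in \D_1$, we pick a witness $A_0 \leq M$ with $A_0^{M^+} \cong A^+$ for some $M^+ \in Y$; by $\leq$-homogeneity of $M$ we may assume $A_0 = A$ and $A^{M^+} = A^+$. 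Using that $M$ is a Fraïssé limit, extend $f$ to a strong embedding $\bar{f} : B \to M$ with $\bar{f}(B) \leq M$; the induced expansion on $\bar{f}(B)$ coming from $M^+$ provides the required $B^+ \in \D_1$, and $\bar f : A^+ \to B^+$ is an $L^+$-embedding.

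For the equality $Y = X(\D_1)$, the inclusion $Y \subseteq X(\D_1)$ is immediate from the definition of $\D_1$. For the converse, fix $N^+ \in X(\D_1)$. Since $Y$ is closed in $X(\D)$, it suffices to show that every basic open neighbourhood of $N^+$ meets $Y$; such a neighbourhood is determined by a finite $A \leq M$ and the requirement that the expansion restricted to $A$ equals $A^{N^+}$. Because $A^{N^+} \in \D_1$, there exist $M^+ \in Y$ and $A_0 \leq M$ together with an $L^+$-isomorphism $\phi : A_0^{M^+} \to A^{N^+}$. In particular $\phi$ is an isomorphism of the $L$-reducts between two strong finite substructures of $M$, so by $\leq$-homogeneity it extends to some $g \in G$. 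Then $g M^+ \in Y$ (since $Y$ is $G$-invariant) and $(gM^+)|_A = \phi(A_0^{M^+}) = A^{N^+}$, so $gM^+$ lies in the chosen neighbourhood of $N^+$.

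The main obstacles are bookkeeping rather than conceptual: one must be careful to distinguish equality of substructures from isomorphism, which is exactly what $\leq$-homogeneity is designed to handle, and one must check that the extension property (iv) really uses only the already-available Fraïssé extension of $f$ together with the transport of the expansion from $M^+ \in Y$.
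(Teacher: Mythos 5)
Your proof is correct and follows essentially the same route as the paper: define $\D_1$ as the induced isomorphism types on finite strong substructures coming from expansions in $Y$, check reasonableness, and prove $X(\D_1)\subseteq Y$ by showing any $N^+\in X(\D_1)$ lies in the closure of $Y$, using $\leq$-homogeneity to move a witness into the right position. In fact you are slightly more thorough than the paper, which passes over the verification of property (iv); your check of (iv) via richness of the Fra\"{\i}ss\'e limit and transport of the expansion from $M^+\in Y$ is exactly the right argument.
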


\begin{proof} Let $\D_1$ consist of isomorphism types of structures induced on $A$ by expansions in $Y$, for all finite $A \leq M$. Then $\D_1 \subseteq \D$ clearly satisfies properties (i), (ii) in the definition of reasonableness. Property (iii) follows from the $\leq$-homogeneity of $M$.

Clearly we have $Y \subseteq X(\D_1)$. We claim that $X(\D_1) \subseteq Y$. Let $N$ be an expansion of $M$ in  $X(\D_1)$. We show that $N$ is in the closure in $X(\D)$ of $Y$ and this will be enough. Suppose $A \leq M$ is finite. There is $N_1 \in Y$ and $B \leq M$ and an isomorphism from  $B^{N_1}$ (the induced structure on $B$ in $N_1$)  to $A^N$. By $\leq$-homogeneity of $M$, there is $g \in G$ which extends this map. By considering $N_1^g \in Y$ we obtain $N_2 \in Y$ with $A^{N_2} = A^{N_1}$. This gives what we need. 
\end{proof}

The following (from \cite{NVT}, Theorem 4; see also Proposition 5.5 in \cite{Z}) gives a criterion for minimality of the $\Aut(M)$-flow $X(\D)$. It relates to the notion of Expansion Property defined as follows:
\begin{definition}\label{EP:def}
Let $\D$ be a reasonable class of expansions of the  amalgamation class $(\K;\leq)$. We say that $\D$ has the \emph{Expansion Property} (\emph{EP} for short, or \emph{Lift Property} in~\cite{HN}) with respect to $(\K;\leq)$
if, for every $A\in \K$ there is $B \geq A$ in $\K$ with the property that for any expansions $A^+, B^+$ of $A, B$ in $\D$, there is an embedding $f : A^+ \to B^+$ which is $\leq$-strong. (If the extra structure imposed by $L^+$ is a total order, this is usually called the \emph{Ordering Property} (cf.~\cite{N}).)
\end{definition}

\begin{theorem} \label{EPthm} With the above notation, suppose that $\D$ is a reasonable class of expansions of the amalgamation class $(\K; \leq)$ and $M$ is the Fra\"{\i}ss\'e limit of $(\K; \leq)$. Then the $\Aut(M)$-flow $X(\D)$ is minimal if and only if $\D$ has the {Expansion Property} with respect to $(\K;\leq)$.\hfill$\Box$
\end{theorem}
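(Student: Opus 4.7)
The plan is to adapt the standard KPT-style argument to the strong-map setting, using the correspondence between basic clopen subsets of $X(\D)$ and expansions of finite $\leq$-substructures of $M$. The easier direction is ($\Leftarrow$): given EP, I use a single large strong extension of $A$ inside $M$ together with $\leq$-homogeneity to move $N_1$ close to $N_2$. The harder direction is ($\Rightarrow$): from minimality I extract, by a compactness argument inside $X(\D)$, a finite witness $B \leq M$ of EP.

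For ($\Leftarrow$) assume EP, fix $N_1, N_2 \in X(\D)$, and consider a basic neighbourhood $U = \{N \in X(\D) : A^N = A^{N_2}\}$ of $N_2$, where $A \leq M$ is finite; set $A^+ := A^{N_2} \in \D$. Let $B \geq A$ in $\K$ be given by EP. Since $A \leq M$, we amalgamate the strong inclusion $A \leq B$ with the inclusion $A \leq A_i$ coming from a chain exhausting $M$, then apply condition (2) in the definition of a rich sequence, to realize a strong copy of $B$ inside $M$ containing $A$. Now $B^{N_1} \in \D$ is an expansion of this $B$, so by EP there is a $\leq$-strong embedding $f : A^+ \to B^{N_1}$; in particular $f(A) \leq M$ and $f : A \to f(A)$ is an $L$-iso\-morph\-ism between strong substructures of $M$. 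By $\leq$-homogeneity (Theorem~\ref{FHT}) there is $g \in \Aut(M)$ with $g|_{f(A)} = f^{-1}$, and unwinding the action of $g$ on expansions gives $A^{g \cdot N_1} = A^+$, so $g \cdot N_1 \in U$.

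For ($\Rightarrow$) assume $X(\D)$ is minimal and fix $A \in \K$, which we may take realized as $A \leq M$. For each expansion $A^+ \in \D$ of $A$ let
\[ U_{A^+} := \{ N \in X(\D) : \text{ some strong } A_0 \leq M \text{ with } A_0 \cong A \text{ has } A_0^N \cong A^+ \}. \]
This is open and $\Aut(M)$-invariant. It is non-empty: identify $A^+$ with an expansion of $A \leq M$, choose a chain $A \leq A_1 \leq A_2 \leq \cdots$ exhausting $M$, and inductively apply property (iv) of reasonableness to build compatible expansions $A_i^+ \in \D$ of $A_i$; the union $N$ satisfies $A^N = A^+$, and property (iii) together with the chain-independence remark ensures $N \in X(\D)$. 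Minimality then forces $U_{A^+} = X(\D)$. Now $U_{A^+}$ is the directed union, over finite $B_0 \leq M$, of the clopen sets $V_{B_0, A^+} := \{N \in X(\D) : \exists A_0 \leq B_0,\ A_0 \cong A,\ A_0^N \cong A^+\}$, so by compactness of $X(\D)$ there is a finite $B_{A^+} \leq M$ with $V_{B_{A^+}, A^+} = X(\D)$. Property (ii) then lets us choose a single finite $B \leq M$ containing $A$ and every $B_{A^+}$ (hence $A \leq B$ and $B_{A^+} \leq B$ by condition $(*)$); monotonicity of $V_{-,A^+}$ gives $V_{B, A^+} = X(\D)$ for every expansion $A^+$. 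Given any expansion $B^+ \in \D$ of $B$, extend $B^+$ to some $N \in X(\D)$ by the same chain argument; for each $A^+$, the property $N \in V_{B, A^+}$ produces a strong $A_0 \leq B$ with $A_0^{B^+} = A_0^N \cong A^+$, yielding the required $\leq$-strong embedding $A^+ \to B^+$. This is EP.

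The main obstacle I expect is not any single deep step but the bookkeeping in moving back and forth between abstract structures in $\D$ and structures induced on strong substructures of $M$ by elements of $X(\D)$: extending a finite expansion to a full element of $X(\D)$ via (iv) and a chain, checking membership in $X(\D)$ via (iii) and chain-independence, and realizing abstract strong embeddings inside $M$ via $\leq$-homogeneity. Each step is short, but they must be lined up so that the compactness step in $X(\D)$ actually delivers a single finite $B \leq M$ witnessing EP simultaneously for all expansions $A^+$.
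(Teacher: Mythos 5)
The paper does not prove this theorem: it is quoted from the cited sources (\cite{NVT}, Theorem 4; \cite{Z}, Proposition 5.5) with only a $\Box$. Your argument is correct and is essentially the standard proof from those sources, transported to the strong-map setting: the $(\Leftarrow)$ direction via realizing a strong copy of the EP-witness $B$ in $M$ and applying $\leq$-homogeneity, and the $(\Rightarrow)$ direction via the open invariant sets $U_{A^+}$, compactness of $X(\D)$, and finiteness of expansions (property (ii)) to extract a single finite witness $B$; the bookkeeping steps you flag (extending finite expansions to elements of $X(\D)$ via (iv) and a chain, and chain-independence of $\leq M$) are all handled correctly.
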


\begin{remarks} Note that as every $G$-flow has a minimal subflow, it follows from the above that if $\D$ is a reasonable class of expansions of the amalgamation class $(\K; \leq)$, then there is a reasonable sub-class $\D_1 \subseteq \D$ which has the Expansion Property with respect to $(\K; \leq)$.
\end{remarks}

\medskip

Suppose, as in Theorem~\ref{sexpthm}, that $L \subseteq L^+$ are first-order languages with $L^+\setminus L$ relational. Suppose $(\K^+; \leq^+)$ is an amalgamation class of finite $L^+$-structures which is a strong expansion of the strong class $(\K; \leq)$ (cf. Definition~\ref{stex:def}). The latter is also an 
amalgamation class, by Theorem~\ref{sexpthm} and its Fra\"{\i}ss\'e limit $M$ is the $L$-reduct of the Fra\"{\i}ss\'e limit $N$ of $\ordpair{\K^+; \leq^+}$. Thus $\Aut(N)$ is a closed subgroup of $\Aut(M)$. We will say that $(\K^+; \leq^+)$ is a \emph{precompact} strong expansion of $(\K; \leq)$ when $\Aut(N)$ is a co-precompact subgroup of $\Aut(M)$. This means that every $\Aut(M)$-orbit on $M^n$ (for $n \in \N$) splits into finitely many $\Aut(N)$-orbits. This is a stronger condition that property (ii) in Definition~\ref{def:reasonable}. Thus, if  $(\K^+; \leq^+)$ is a precompact, strong expansion of $(\K; \leq)$, then $\K^+$ is a reasonable expansion of $(\K; \leq)$ if  property (iii) in Definition~\ref{def:reasonable} holds,  and in this case we can consider the $\Aut(M)$-flow $X(\K^+)$. 

%By the homogeneity of the structures $M, N$, co-precompactness of $\Aut(N)$ in $\Aut(M)$ translates into the following condition on the classes:
%
%\medskip
%
%\noindent\emph{Precompactness:\/} If $A \in \K$, there are finitely many $B_1^+,\ldots, B_r^+ \in \K^+$ and strong $\leq$-embeddings $f_i : A \to B_i^+$ (preserving the $L$-structure) such that whenever $C^+ \in \K^+$ and $f : A \to C^+$ is an $\leq$-embedding (preserving the $L$-structure), then there is $i \leq r$ and a $\leq^+$-embedding $g : B_i^+ \to C^+$ such that $f = g \circ f_i$.
%
\medskip

We then have the following version of \cite{KPT}, Theorem 10.8 and \cite{NVT}, Theorem 5. See also \cite{Z}, Theorem 5.7. 

\begin{theorem}\label{thm:comeagre} Let $L \subseteq L^+$ be first-order languages with $L^+\setminus L$ relational. Suppose $(\K; \leq)$ is a strong amalgamation class of finite $L$-structures with Fra\"{\i}ss\'e limit $M$. Suppose that $(\K^+; \leq^+)$ is a precompact, reasonable, strong expansion of $(\K; \leq)$ consisting of rigid $L^+$-structures. If $(\K^+; \leq^+)$ is a Ramsey class and $\K^+$ has the Expansion Property with respect to $(\K; \leq)$, then the $\Aut(M)$-flow $X(\K^+)$ is the universal minimal flow for $\Aut(M)$. It has a comeagre orbit consisting of expansions of $M$ isomorphic to the Fra\"{\i}ss\'e limit of $(\K^+; \leq^+)$.
\end{theorem}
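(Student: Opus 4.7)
The plan is to deduce the theorem from Theorem~\ref{thm:KPT} and the general machinery of \cite{KPT,NVT,Z} by exhibiting the Fra\"{\i}ss\'e limit of $(\K^+;\leq^+)$ as a distinguished point of $X(\K^+)$ whose stabilizer in $G = \Aut(M)$ is a co-precompact, extremely amenable subgroup.

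First I would invoke Theorem~\ref{sexpthm} to produce the Fra\"{\i}ss\'e limit $N$ of $(\K^+;\leq^+)$, whose $L$-reduct equals $M$. Set $H = \Aut(N)$, a closed subgroup of $G$. The precompactness hypothesis says exactly that $H$ is co-precompact in $G$; Theorem~\ref{thm:KPT}, applied to the Ramsey class $(\K^+;\leq^+)$ of rigid structures, yields that $H$ is extremely amenable. Thus, by the general result of \cite{KPT,NVT}, the universal minimal flow $M(G)$ is canonically realised as a minimal subflow of the completion $\widehat{G/H}$ with respect to the right uniformity on $G$.

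Next, consider $N \in X(\K^+)$ and the orbit map $g \mapsto g\cdot N$; it descends to a $G$-equivariant injection $\iota : G/H \hookrightarrow X(\K^+)$ with image $G\cdot N$. I would verify that $\iota$ is uniformly continuous when $G/H$ carries the right uniformity: basic neighbourhoods in $X(\K^+)$ are determined by prescribing $A^{M^+}$ for some finite $A \leq M$, and equivariance reduces the matching in $G/H$ to the pointwise stabilizer of $A$. Hence $\iota$ extends to a continuous $G$-map $\hat\iota : \widehat{G/H} \to X(\K^+)$. Theorem~\ref{EPthm} together with the Expansion Property shows that $X(\K^+)$ is itself a minimal $G$-flow, so $G\cdot N$ is dense in $X(\K^+)$ and $\hat\iota$ is surjective. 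Since $\widehat{G/H}$ is already a minimal flow (being the quotient setup attached to an extremely amenable subgroup), $\hat\iota$ is an isomorphism of $G$-flows, yielding $X(\K^+) \cong M(G)$.

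Finally, for the comeagre orbit, I would argue that $G\cdot N$ is precisely the set of $M^+\in X(\K^+)$ that are $L^+$-isomorphic to $N$. By the Fra\"{\i}ss\'e characterisation, this is the set of $M^+$ satisfying a countable list of extension properties: for each pair $A^+ \leq^+ B^+$ in $\K^+$ and each $\leq^+$-embedding $\alpha: A^+ \hookrightarrow M^+$, there is a $\leq^+$-embedding $B^+ \hookrightarrow M^+$ extending $\alpha$. Each such condition defines an open subset of $X(\K^+)$; density of each follows from amalgamation in $(\K^+;\leq^+)$ combined with $\leq$-homogeneity of $M$, where the Expansion Property is used to ensure that the required amalgam actually appears as the induced expansion on a strong substructure of $M$. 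Thus $G\cdot N$ is a dense $G_\delta$, hence comeagre.

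The main obstacle I anticipate is the identification $\widehat{G/H} \cong X(\K^+)$ in the non-standard strong-class context: one must check that the uniform structure transported from $G/H$ coincides with the natural one on $X(\K^+)$, equivalently that $\iota$ is a uniform embedding and not merely a continuous injection. The cleanest route is probably to combine the universal property of $M(G)$ with minimality of $X(\K^+)$ coming from EP; alternatively, one can reduce to the standard (substructure) case by adding closure functions $F_{k,n}$ as in the discussion preceding Theorem~\ref{thm:Ramseyclosures}, so that $\leq^+$ becomes ordinary substructure in an expanded language with the same automorphism group, after which KPT applies verbatim. Once this identification is in place, the comeagre orbit claim follows by a routine back-and-forth.
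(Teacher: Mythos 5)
The paper offers no written proof of Theorem~\ref{thm:comeagre}: it defers to \cite{KPT} Theorem 10.8, \cite{NVT} Theorem 5 and \cite{Z} Theorem 5.7, having explained in Section~\ref{rcsec} that such statements transfer to the strong-class setting either by adapting the standard proofs or by encoding $\leq^+$ as ordinary substructure in a language enriched by closure functions. Your proposal reconstructs exactly that standard argument, and your closing suggestion (reduce to the substructure case via the functions $F_{k,n}$, after which KPT/NVT apply verbatim) is precisely the paper's intended route. One inference in the middle of your argument is not valid as written, however: you conclude that $\hat{\iota}$ is an isomorphism from its surjectivity together with minimality of $\widehat{G/H}$. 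First, $\widehat{G/H}$ is not minimal merely because $H$ is extremely amenable --- minimality is exactly what the Expansion Property buys (cf.\ Theorem~\ref{EPthm}); second, even between two minimal flows a surjective $G$-map need not be injective ($M(G)$ itself surjects onto every minimal flow). What is actually required is that $\iota : G/H \to X(\K^+)$ be a uniform embedding for the right uniformity, so that $X(\K^+)$ \emph{is} the completion $\widehat{G/H}$; you correctly single this out as the main obstacle, and either of your proposed repairs (the direct uniform-embedding verification, or the closure-function reduction) closes the gap. With that step done properly, the remainder --- extreme amenability of $H=\Aut(N)$ from Theorem~\ref{thm:KPT}, minimality of $X(\K^+)$ from EP via Theorem~\ref{EPthm}, and the comeagre orbit realised as the dense $G_\delta$ set cut out by the extension properties characterising the \Fraisse{} limit --- is the standard proof the paper is invoking.
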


We note a group-theoretic consequence of the above.
Suppose $G$ is a closed permutation group on a countable set.  We can regard $G$ as the automorphism group of some homogeneous structure $M$. Suppose $H$ is a closed, extremely amenable subgroup of $G$. We can consider this as the automorphism group of a homogeneous expansion $M^*$ of $M$. If this is a precompact expansion and $\Age(M^*)$ has the Expansion Property with respect to $\Age(M)$ (abusing terminology, we will say that $H$ has EP as a subgroup of $G$), then the above result  gives a description of $M(G)$, the universal minimal flow of $G$. Consider the quotient space $G/H$ with the quotient of the right uniformity on $G$ and denote by $\widehat{G/H}$ the completion. By precompactness, this is compact, metrizable and embeds $G/H$ homeomorphically as a comeagre set. It is a $G$-flow which, by the EP, is minimal. Extreme amenability of $H$ then implies that $\widehat{G/H}$ is isomorphic to $M(G)$ as a $G$-flow. 
This then yields the following result.

\begin{theorem}\label{X27} Suppose $G$ is a closed permutation group on a countable set and $H_1, H_2 \leq G$ are closed, extremely amenable, co-precompact subgroups of $G$ with EP. Then $H_1, H_2$ are conjugate in $G$. Moreover, $H_1$ is maximal amongst extremely amenable subgroups of $G$.
\end{theorem}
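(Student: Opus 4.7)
The plan is to identify, for each $i\in\{1,2\}$, the universal minimal flow $M(G)$ with the $G$-flow $\widehat{G/H_i}$, whose comeagre orbit is canonically in bijection with $G/H_i$. This identification is precisely what the paragraph preceding the theorem extracts from Theorem~\ref{thm:comeagre}, using exactly the hypotheses on each $H_i$ (closed, extremely amenable, co-precompact in $G$, with EP). Both parts of the theorem will flow from this picture.

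For the conjugacy assertion I would fix a $G$-equivariant homeomorphism $\phi:\widehat{G/H_1}\to\widehat{G/H_2}$ coming from the uniqueness of the universal minimal flow. Both $G/H_1$ and $\phi^{-1}(G/H_2)$ are then comeagre in the Baire space $\widehat{G/H_1}$, so their intersection is non-empty. Taking $x$ in this intersection, the $G$-stabiliser of $x$ is a conjugate of $H_1$ (since $x\in G/H_1$) and, by the $G$-equivariance of $\phi$, coincides with the $G$-stabiliser of $\phi(x)\in G/H_2$, which is a conjugate of $H_2$; hence $H_1$ and $H_2$ are conjugate in~$G$.

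For the maximality assertion, let $K\leq G$ be extremely amenable with $H_1\leq K$; passing to the closure $\overline K$ (still extremely amenable, since the $K$-fixed set of any compact continuous action is automatically $\overline K$-fixed) I may assume $K$ is closed. Since $H_1\leq K$, co-precompactness passes from $H_1$ to $K$. Identifying $K$ with $\Aut(M_K)$ for the homogeneous expansion $M_K$ of $M$ obtained by naming all $K$-orbits on finite tuples, $\Age(M_K)$ becomes a reasonable, precompact, strong expansion of $(\Age(M);\leq)$, and is Ramsey by Theorem~\ref{thm:KPT}. The canonical continuous $G$-equivariant surjection $\widehat{G/H_1}\twoheadrightarrow\widehat{G/K}$ realises $\widehat{G/K}$ as a continuous $G$-equivariant image of the minimal flow $M(G)\cong\widehat{G/H_1}$, hence $\widehat{G/K}$ is itself minimal, and so by Theorem~\ref{EPthm} the class $\Age(M_K)$ has the Expansion Property with respect to $(\Age(M);\leq)$. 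Thus $K$ satisfies the same hypotheses as $H_1$, and the already-established conjugacy part yields $K=g_0H_1g_0^{-1}$ for some $g_0\in G$. From $H_1\leq K=g_0H_1g_0^{-1}$ and the fact that conjugation preserves orbit counts, $H_1$ and $K$ share the same finite orbit partition on every $M^n$; as both are closed subgroups of $G$, a standard pointwise-convergence argument then forces $K=H_1$.

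The hard part is the verification that $K$ inherits the Expansion Property from the setup, so that the conjugacy part becomes applicable to $K$ in place of $H_2$. This rests on the two clean observations that continuous $G$-equivariant images of minimal flows are minimal (giving minimality of $\widehat{G/K}$) and that Theorem~\ref{EPthm} converts this minimality into the Expansion Property for $\Age(M_K)$.
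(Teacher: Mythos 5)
Your proposal is correct and follows essentially the same route as the paper: identify $M(G)$ with $\widehat{G/H_i}$, match the comeagre orbits under the $G$-isomorphism to get conjugacy, and for maximality pass EP to $K \geq H_1$ via the continuous $G$-surjection $\widehat{G/H_1} \to \widehat{G/K}$ (whose image is minimal), then conclude by the orbit-counting argument that $H_1 \leq H_1^{g_0}$ forces equality. The only (harmless) additions are your explicit reduction to closed $K$ and the Baire-category phrasing of ``the comeagre orbit maps to the comeagre orbit.''
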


\begin{proof} The universal minimal flow $M(G)$ is isomorphic to  $\widehat{G/H_i}$ and $G/H_i$ is a comeagre $G$-orbit in this completion. 

So there is homeomorphism $\widehat{G/H_1} \to \widehat{G/H_2}$ which is a $G$-morphism. This must map the comeagre orbit to the comeagre orbit, so maps the coset $H_1$ to some coset $gH_2$. The stabilisers of these points must be identical, so $H_1 = gH_2g^{-1}$, as required. 

For the maximality part, we first observe that if $H_1 \leq H_1^g$, then $H_1^g = H_1$. By precompactness each $G$-orbit on $n$-tuples splits into a finite number of $H_1$-orbits, and the same number of $H_1^g$-orbits. It follows that $H_1, H_1^g$ have the same orbits on $n$-tuples and so are equal. Now suppose that $H_1 \leq H \leq G$ and $H$ is closed and extremely amenable. Clearly $H$ is co-precompact in $G$ and the $G$-map $gH_1 \mapsto gH$ extends to a continuous surjection $\widehat{G/H_1} \to \widehat{G/H}$. It follows that $\widehat{G/H}$ is a minimal $G$-flow and so $H$ has EP. From the above, we obtain $H=H_1$ as required.
\end{proof}

\begin{remarks}  By Theorem 6 of \cite{NVT}, we know that if a closed permutation group $G$ on a countable set has a closed, co-precompact, extremely amenable subgroup, then it has one, $H$, satisfying EP. By the above, such a subgroup $H$ is maximal amongst extremely amenable subgroups. The following example, pointed out to us by the Referee, shows that there can also be maximal, co-precompact subgroups which are not conjugate to $H$. Let $G$ be the full symmetric group on a countable set $M$ and $H$ the stabilizer of a dense linear order on $M$. So $H$ is co-precompact, extremely amenable and has EP. Let $a \in M$ and let $K$ be the stabilizer in $G_a$ of a dense linear order on $M\setminus \{a\}$. Then $K$ is again extremely amenable and co-precompact, but it is not contained in a conjugate of $H$ as $K$ fixes a point and has two orbits on $M$, whereas the stabilizer in $H$ of a point has 3 orbits on $M$.  \end{remarks}

\subsection{Comeagre orbits and the weak amalgamation property}

Suppose that $(\K; \leq)$ is an amalgamation class of $L$-structures and $\D$ is a reasonable class of expansions of $(\K; \leq)$. Then $(\D; \leq)$ is still a strong class, but of course it need not be an amalgamation class. 
Following \cite{KR} we say that $(\D; \leq)$ has the \emph{weak amalgamation property} if for all $A \in \D$, there is $B \in \D$ and a strong map $f : A \to B$ such that for all strong maps $f_i : B \to C_i \in \D$ (for $i=1,2$), there exist $D \in \D$ and strong maps $g_i : C_i \to D$ with $g_1(f_1(a)) = g_2(f_2(a))$ for all $a \in A$.  A similar property (the \emph{almost amalgamation property}) is introduced by Ivanov in \cite{Ivanov2}.  We then have:

\begin{lemma} \label{waplemma} Suppose that $\D$ is a reasonable class of expansions of the amalgamation class $(\K; \leq)$. Let $M$ be the \Fraisse{} limit of $(\K; \leq)$, let $G = \Aut(M)$ and consider the $G$-flow $X(\D)$. If $(\D; \leq)$ does not have the weak amalgamation property, then all $G$-orbits on $X(\D)$ are meagre. 
\end{lemma}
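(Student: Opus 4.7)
The plan is to prove the contrapositive: if some orbit $G\cdot N$ in $X(\D)$ is non-meagre, then $(\D;\leq)$ satisfies WAP. The strategy adapts the Kechris--Rosendal argument linking WAP to the existence of a comeagre orbit.

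First I would reduce to a basic open set. Since $G\cdot N$ is analytic (the continuous image of the Polish group $G$) it has the Baire property, so non-meagreness gives that it is comeagre in some non-empty open set, which by the basis of $X(\D)$ we may take as $U_{A, A^+}$ for some finite $A \leq M$ and $A^+ \in \D$. After moving $N$ within its orbit we may assume $A^N = A^+$, and since meagreness is preserved under intersection with open subsets, $G \cdot N$ is also comeagre in $U_{B, B^+}$ for every finite $A \leq B \leq M$ with $B^+ := B^N$.

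Next I would claim that WAP holds at $A^+$ with witness $B^+$ for $B$ sufficiently large. Given strong extensions $C_1^+, C_2^+$ of $B^+$ in $\D$, realize them inside $M$ by amalgamation in $(\K;\leq)$ so that $B \leq C_i \leq M$, $C_1 \cap C_2 = B$, and $D := C_1 \cup_B C_2 \leq M$. Reasonableness of $\D$ makes each $U_{C_i, C_i^+}$ a non-empty open subset of $U_{B, B^+}$ in which $G \cdot N$ is comeagre, and Kuratowski--Ulam applied to $U_{C_1, C_1^+} \times U_{C_2, C_2^+}$ yields a comeagre set of pairs $(N_1, N_2)$ with $N_2 = gN_1$ for some $g \in G$.

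The main obstacle is aligning the two copies of $B$: the copy of $B$ inside $C_1$ is the literal $B$, while the copy inside $g^{-1}C_2$ (when we transport $C_2^+$ into $N_1$) is $g^{-1}B$, and these agree only if $g$ fixes $B$ pointwise. The hard part of the proof will be upgrading orbit equivalence under $G$ to equivalence under the pointwise stabilizer $G_B := \{h \in G : h|_B = \id_B\}$, that is, showing that $G_B \cdot N$ is comeagre in $U_{B, B^+}$. My approach is to use $\leq$-homogeneity of $M$ to factor any $g$ with $gN \in U_{B, B^+}$ as $g = kh$ with $k \in G_B$ and $h \in G$ extending the canonical isomorphism $g^{-1}B \to B$ induced by $g$, and then to apply an Effros-type genericity argument to pin down a single comeagre $G_B$-orbit. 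Granted this refinement, after a further density step arranging $C_1 \cap g^{-1}C_2 = B$ generically, the $N_1$-induced expansion on $C_1 \cup_B g^{-1}C_2 \leq M$ is an amalgam in $\D$ of $C_1^+$ and $C_2^+$ over $B^+$, hence over $A^+$. A final subtlety to dispatch is propagating WAP from structures $A^+$ of the form $A^N$ to all of $\D$: one observes that strong extensions in $\D$ of such structures are forced to lie in the reasonable sub-class $\D_1 \subseteq \D$ corresponding to $\overline{G\cdot N}$ (otherwise a non-empty basic open would be disjoint from the comeagre set $G \cdot N$), and then deduces $\D_1 = \D$, since otherwise $G\cdot N \subseteq X(\D_1)$ would fail to be non-meagre in $X(\D)$.
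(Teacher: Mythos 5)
Your contrapositive, Kechris--Rosendal-style strategy can in principle be made to work, but as written it has two genuine gaps, and it is also much heavier than necessary. The first gap is the step you yourself flag as ``the hard part'': upgrading non-meagreness of the $G$-orbit to control over the $G_B$-orbit. You defer this to an unspecified ``Effros-type genericity argument'' and then proceed ``granted this refinement'', so the crux of the argument is missing. In fact no Effros machinery (nor Kuratowski--Ulam, nor the Baire property of analytic sets) is needed: $G_B$ is an \emph{open} subgroup of $G$, hence of countable index, so $G\cdot N$ is a countable union of homeomorphic translates of $G_B\cdot N$; if every $G_B$-orbit were nowhere dense, every $G$-orbit would be meagre. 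This countable-index observation is exactly the pivot of the paper's proof, which runs in the direct direction: fix $A$ witnessing the failure of WAP, set $H=G_A$, and show each $H$-orbit is nowhere dense --- for if $H\cdot t$ were dense in a basic open set determined by some $(B,s_B)$ with $A\leq B$, then for every $(C,s_C)\geq (B,s_B)$ in $\D$ one gets (via reasonableness plus an element of $H$ moving $t$ into the relevant open set) a $\leq$-embedding of $(C,s_C)$ into $(M,t)$ fixing $A$ pointwise; the amalgam $D$ is then simply a finite strong substructure of $(M,t)$ containing the two images, contradicting the choice of $A$. Working over $A$ with the stabiliser $H=G_A$ from the outset is what dissolves the ``aligning the two copies of $B$'' problem you identify: both extensions are embedded into the single structure $(M,t)$ over the same literal $A$, so no matching of two generic points is required.

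The second gap is your final globalization step. Since WAP quantifies over every $A\in\D$, you must deal with isomorphism types in $\D$ not realized in $(M,N)$, and your deduction that $\D_1=\D$ ``since otherwise $G\cdot N\subseteq X(\D_1)$ would fail to be non-meagre in $X(\D)$'' is a non sequitur: a proper closed invariant subset of the form $X(\D_1)$ can have non-empty interior in $X(\D)$ and can therefore contain a non-meagre (even relatively comeagre) orbit without being all of $X(\D)$. Non-meagreness of an orbit gives comeagreness in \emph{some} non-empty open set, not density in the whole space. The paper's direct formulation sidesteps this issue structurally, by fixing the witness $A$ to the failure of WAP first and analysing the orbits of expansions in which that witness is realized.
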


\begin{proof} Suppose that $A \in \D$ witnesses that $(\D; \leq)$ does not have the weak amalgamation property. Let $t \in X(\D)$ (so we think of this as the `extra structure' on $M$ for a particular expansion in $X(\D)$) and let $H$ denote the pointwise stabiliser in $G$ of $A$. We claim that the $H$-orbit $H\cdot t$ containing $t$ is nowhere-dense in $X(\D)$. As $H$ is of countable index in $G$, it then follows that the $G$-orbit $G\cdot t$ is a meagre subset of $X(\D)$. 

So suppose for a contradiction that $H\cdot t$ is dense in the open set $O$. We may assume that there is a finite $B \leq M$ with $A \leq B$ and $(B, s_B) \in \D$ such that $O = \{s \in X(\D) : s\vert B = s_B\}$. (Again, by the notation $(B, s_B)$ we mean the structure $B$ together with the additional structure it has as a member of $\D$.) Clearly we have that $t\vert A$, the induced structure on $A$ in $t$, is equal to  $s_B\vert A$. 

Note that, by reasonableness of $\D$, if $(B, s_B) \leq (C, s_C) \in \D$, then there is $s \in O$ and a $\leq$-embedding $f : (C, s_C) \to (M, s)$ which is the identity on $B$. As $H \cdot t$ is dense in $O$, there is $h \in H$ such that $h\cdot t \supseteq s \vert C$. Then $h^{-1}\circ f : (C, s_C) \to (M, t)$ is a $\leq$-embedding which is the identity on $A$. It follows that $(\D; \leq)$ has the weak amalgamation property over $A$: a contradiction.
\end{proof}

\begin{remarks} \rm Arguments in \cite{KR, Ivanov2} show that, with the above notation, if  $(\D; \leq)$ has the joint embedding property and the weak amalgamation property, then there is a comeagre $\Aut(M)$-orbit on $X(\D)$.
\end{remarks}

\subsection{EPPA and amenability} \label{sec:EPPA} The following is a modification of a well-known definition.

\begin{definition} \label{def:EPPA} Suppose $(\K; \leq)$ is a strong class of finite structures (as in Section~\ref{FCsec}). A \emph{strong partial automorphism} of $A \in \K$ is an isomorphism $f : D \to E$ for some $D, E \leq A$. We say that $(\K; \leq)$  has the \emph{extension property for strong partial automorphisms} (sometimes called the Hrushovski extension property or EPPA) if whenever $A \in \K$ there is $B \in \K$ with $A \leq B$ and such that every strong partial automorphism of $A$ extends to an automorphism of $B$. 
\end{definition}

 Recall that a topological group $G$ is \emph{amenable} if, whenever $Y$ is a $G$-flow, then there is a Borel probability measure $\mu$ on $Y$ which is invariant under the action of $G$. Thus, if $C \subseteq Y$ is a Borel set and $g \in G$, then $\mu(C) = \mu(gC)$. Of course, if $G$ is extremely amenable, then it is amenable (if $y \in Y$ is fixed by $G$, then take for $\mu$ the probability measure which concentrates on $y$). 

The following is  due to Kechris and Rosendal (\cite{KR}, Proposition 6.4). The terminology is as in Section~\ref{FCsec} here.

\begin{theorem}\label{thm:KR} Suppose $(\K; \leq)$ is an  amalgamation class of finite structures with Fra\"{\i}ss\'e limit $M$. Let $G = \Aut(M)$. Suppose $(\K; \leq)$ has the extension property for strong partial automorphisms (EPPA). Then: 
\begin{enumerate}
\item[(i)] There exist compact subgroups $G_1 \subseteq G_2 \subseteq \cdots$ of $G$ such that $\bigcup_i G_i$ is dense in $G$.
\item[(ii)] $G$ is amenable.

\end{enumerate}
\end{theorem}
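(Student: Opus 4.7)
The plan is to use EPPA together with strong amalgamation to build an exhausting chain $B_1 \leq B_2 \leq \cdots$ of finite strong substructures of $M$ (so $M = \bigcup_i B_i$) with the property that every strong partial automorphism of $B_i$ extends to an automorphism of $B_{i+1}$. I will then define
\[ G_n = \{g \in G : g(B_i) = B_i \text{ for all } i \geq n\}, \]
show these are nested compact subgroups of $G$ with dense union, and deduce amenability by averaging Haar measures of the $G_n$.

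To build the chain, start with any exhausting sequence $A_1 \leq A_2 \leq \cdots$ of finite strong substructures of $M$ coming from a rich sequence. Inductively, given $B_i \leq M$, first amalgamate $B_i$ with $A_{i+1}$ in $(\K;\leq)$ to obtain some $C \in \K$ containing both as strong substructures, then apply EPPA to $C$ to produce $B_{i+1} \in \K$ with $C \leq B_{i+1}$ such that every strong partial automorphism of $B_i$ (which is also a strong partial automorphism of $C$) extends to an automorphism of $B_{i+1}$. By the $\leq$-homogeneity of $M$, realize $B_{i+1} \leq M$ compatibly with the inclusion $B_i \leq M$. Each $G_n$ is then closed in $G$ (intersection of setwise stabilizers of the finite sets $B_i$, $i\geq n$), and every $G_n$-orbit on $M$ is finite, since any $x\in M$ lies in some $B_i$ with $i \geq n$ and $G_n\cdot x \subseteq B_i$; hence $G_n$ is a compact subgroup of $G$, and the containment $G_n \leq G_{n+1}$ is immediate.

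For density of $\bigcup_n G_n$ in $G$, given $g \in G$ and a finite $F \subseteq M$, first enlarge $F$ so that $F \leq M$, then pick $n$ with $F \cup g(F) \subseteq B_n$; condition (*) of Definition~\ref{strongstrong} then yields $F, g(F) \leq B_n$, so $g|_F : F \to g(F)$ is a strong partial automorphism of $B_n$. By construction this extends to some $h_{n+1} \in \Aut(B_{n+1})$; iterating, each $h_j \in \Aut(B_j)$ is itself a strong partial automorphism of $B_{j+1}$, hence extends to some $h_{j+1} \in \Aut(B_{j+1})$, necessarily satisfying $h_{j+1}(B_j) = B_j$. The union $h = \bigcup_{j \geq n+1} h_j$ is an automorphism of $M$ setwise preserving every $B_j$ for $j \geq n+1$, so $h \in G_{n+1}$ and $h|_F = g|_F$.

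For part (ii), let $X$ be any $G$-flow. Fix $x_0 \in X$ and push the normalized Haar measure on $G_n$ forward along $g \mapsto g\cdot x_0$ to obtain a $G_n$-invariant Borel probability measure $\nu_n$ on $X$. By weak-$*$ compactness of $\mathrm{Prob}(X)$, the sequence $(\nu_n)$ has an accumulation point $\nu$. For any $g \in \bigcup_m G_m$, $g \in G_n$ for all large $n$, so eventually $g_*\nu_n = \nu_n$ and hence $g_*\nu = \nu$; by density of $\bigcup_m G_m$ in $G$ together with continuity of the map $g\mapsto g_*\nu$ from $G$ to $\mathrm{Prob}(X)$, $\nu$ is $G$-invariant. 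The main technical subtlety lies in the chain construction in the second paragraph: one has to interleave strong amalgamation (to exhaust $M$) with EPPA (to link consecutive terms), verify that $F \leq B_n$ whenever $F \leq M$ and $F\subseteq B_n$ via condition (*), and ensure that each $B_{i+1}$ is realized as a strong substructure of $M$ extending $B_i$.
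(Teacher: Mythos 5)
Your argument is correct and is the standard proof of this result. The paper itself gives no proof: it quotes the statement from Kechris and Rosendal (\cite{KR}, Proposition 6.4) and merely remarks that (ii) follows from (i) by standard results on amenability --- which is exactly the Haar-measure averaging you carry out, so your write-up supplies the details the paper omits. One small imprecision in your chain construction: if you form $C$ by \emph{abstractly} amalgamating $B_i$ with $A_{i+1}$ and then embed the resulting $B_{i+1}$ into $M$ over $B_i$ using $\leq$-homogeneity, the image of $A_{i+1}$ inside $M$ need not be the original substructure $A_{i+1}\subseteq M$, so the exhaustion $M=\bigcup_i B_i$ --- which you use both to see that every $G_n$-orbit on $M$ is finite (hence $G_n$ compact) and in the density argument --- is not actually guaranteed. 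The fix is immediate: since $B_i\leq M$ and $A_{i+1}\leq M$, take $C=A_j$ for $j$ large enough that $B_i\cup A_{i+1}\subseteq A_j$; condition (*) of Definition~\ref{strongstrong} gives $B_i, A_{i+1}\leq C$, and you then apply EPPA to this concrete $C\leq M$ and embed the resulting $B_{i+1}$ into $M$ over $C$. With that adjustment, the rest --- closedness and compactness of the setwise stabilizers $G_n$, the density argument extending $g|_F$ step by step through the chain, and the weak-$*$ accumulation point of the pushforward Haar measures --- is sound.
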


Note that (ii) follows from (i) by standard results on amenability.

\section{$k$-Sparse graphs and their orientations}\label{sparsesec}

\subsection{The space of orientations}

In this section, the structures we work with are graphs and directed graphs (digraphs), so we use notation which is closer to standard graph-theoretic notation. Note that our directed graphs will be asymmetric: we do not allow loops nor vertices $a, b$ where both $a \to b$ and $b \to a$ are directed edges.

An undirected graph will be denoted as
$\Gamma = (A; R)$, $R \subseteq [A]^2$; if $B\subseteq A$ then $R^B = [B]^2\cap R$; so $(B; R^B)$ is the
 induced subgraph of $\Gamma$ on $B$. 

A digraph will be denoted as $\Delta = (A; S)$, where $S \subseteq A^2$.

\begin{definition}\label{vs}  Let $k \in \N$. We say that a graph $\Gamma = (A; R)$ is \emph{$k$-sparse} if for all finite $B \subseteq A$ we have $\lvert R^B \rvert \leq k \lvert B \rvert$. An infinite graph is \emph{sparse} if it is $k$-sparse for some $k \in \N$. 
\end{definition}
Note that this differs from the use of ``sparse''  in, for example, \cite{NP}.

\begin{remark} \label{arityn} We could consider the more general notion of a  sparse relational structure. For example, if $n \geq 2$ and $R_1 \subseteq A^n$ is an $n$-ary relation on $A$, then we say that $(A; R_1)$ is $k$-sparse if for all $B \subseteq A$ we have $\lvert R_1 \cap B^n\rvert \leq k\lvert B\rvert $. However, we can then consider the graph $(A; R)$ which has edges $\{a,b\}$ where $a,b \in \{s_1,\ldots, s_n\}$ for some $(s_1,\ldots, s_n) \in R_1$. This graph is $\binom{kn}{2}$-sparse. Thus, the results below apply more generally  to sparse relations.
Of course, dealing with graphs simplifies the reasoning.
\end{remark}

\begin{definition}  Let $k \in \N$. A graph $\Gamma = (A; R)$ is \emph{$k$-orientable} if there is a digraph $\Delta = (A; S)$ in which the out-valency of each vertex of $\Delta$ is at most $k$ and such that for all $a_1, a_2 \in A$, 
\[ \{a_1,a_2\} \in R \Leftrightarrow (a_1,a_2) \in S  \mbox{ or } (a_2, a_1) \in S \mbox{ (but not both)}.\]
In this case, we refer to $\Delta$ (or $S$) as a \emph{$k$-orientation} of $\Gamma$.
\end{definition}

So, informally, a $k$-orientation of $\Gamma$ is obtained by choosing a direction on each edge of $\Gamma$ in such a way that no vertex has more than $k$ directed edges coming out of it. Note that if a graph $\Gamma = (A; R)$ is $k$-orientable, then its edge-set can be decomposed into subsets $R_1,\ldots, R_k$ such that each graph $(A; R_i)$ is $1$-orientable. Moreover, a graph is $1$-orientable if and only if each of its connected components is a `near-tree': a tree with at most one extra edge. Thus, $k$-orientability is closely related to \emph{$k$-arboricity}. The following is well-known to graph-theorists~\cite{NW}, but we include a proof.

\begin{theorem} \label{23}A graph $\Gamma = (A; R)$ is $k$-orientable if and only if it is $k$-sparse.
\end{theorem}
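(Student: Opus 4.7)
The forward direction is immediate: given a $k$-orientation $\Delta = (A;S)$ of $\Gamma$ and a finite $B \subseteq A$, each edge of $R^B$ arises from a unique pair in $S$ with both coordinates in $B$; charging such a pair to its source yields
\[|R^B| = \sum_{v \in B}\lvert\{u \in B : (v,u) \in S\}\rvert \leq k|B|,\]
since every vertex of $\Delta$ has out-degree at most $k$ in all of $A$, hence also in $B$.

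For the converse, suppose $\Gamma$ is $k$-sparse. My plan is to first treat finite $\Gamma$ by an augmenting-path argument, and then bootstrap to arbitrary $\Gamma$ by a compactness argument. In the finite case, I would assume for contradiction that $\Gamma$ admits no $k$-orientation, and pick an orientation minimising the total excess $D = \sum_{v \in A}\max(d^+(v)-k, 0)$. Since $D>0$, some vertex $v_0$ satisfies $d^+(v_0) \geq k+1$. Let $B$ be the set of vertices reachable from $v_0$ by a directed path in the current orientation. Any edge between $B$ and $A \setminus B$ must be oriented into $B$ (else the outside endpoint would also belong to $B$), so every out-edge from a vertex of $B$ stays within $B$; consequently $\sum_{v \in B}d^+(v) = |R^B| \leq k|B|$. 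As $v_0 \in B$ contributes at least $k+1$ to this sum, there must be some $w \in B$ with $d^+(w) \leq k-1$. Reversing all edges along a simple directed path from $v_0$ to $w$ decreases $d^+(v_0)$ by one, increases $d^+(w)$ by one (still at most $k$), and leaves the out-degrees of intermediate vertices unchanged, strictly decreasing $D$ and contradicting minimality.

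For the infinite case, equip the product space $X = \prod_{e \in R}\{e^+, e^-\}$, whose coordinates encode the two possible orientations of each edge $e$, with the product topology; then $X$ is compact by Tychonoff's theorem. For each $v \in A$, the set $C_v \subseteq X$ of orientations in which $d^+(v) \leq k$ is closed, because its complement is the union, over all $(k+1)$-element subsets $F$ of edges incident to $v$, of the basic open event ``every edge of $F$ is oriented away from $v$''. The set of $k$-orientations of $\Gamma$ is precisely $\bigcap_{v \in A}C_v$, so by compactness it suffices to verify the finite intersection property. Given a finite $V_0 \subseteq A$, I would apply the finite case to the induced subgraph on $V_0$ (which is itself $k$-sparse) to obtain a $k$-orientation of $R^{V_0}$, and then extend this to an orientation of all of $R$ by orienting every edge with exactly one endpoint in $V_0$ inward (towards $V_0$) and orienting the remaining edges arbitrarily; every $v \in V_0$ then has out-degree equal to its out-degree in $R^{V_0}$, namely at most $k$. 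The main technical step is the finite case, whereas the compactness wrap-up is routine; I would note, however, that a $k$-sparse graph may have vertices of infinite degree, so it is essential that the closedness of $C_v$ holds for every $v$, not merely for those of finite degree.
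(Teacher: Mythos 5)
Your proof is correct, but the converse direction takes a genuinely different route from the paper's. The paper reduces to the finite case (also by compactness) and then applies Hall's Marriage Theorem to the bipartite graph with parts $R$ and $A\times[k]$, where $e$ is joined to $(a,l)$ iff $a\in e$; the $k$-sparseness condition $k\lvert C\rvert\geq\lvert R^C\rvert\geq\lvert I\rvert$ is exactly Hall's condition, and a matching of $R$ into $A\times[k]$ assigns each edge its initial vertex. Your argument instead minimises the total excess $\sum_v\max(d^+(v)-k,0)$ over all orientations and improves a non-optimal orientation by reversing a directed path from an overloaded vertex $v_0$ to a vertex $w$ of out-degree at most $k-1$; the existence of $w$ inside the reachable set $B$ is where sparseness enters, via $\sum_{v\in B}d^+(v)=\lvert R^B\rvert\leq k\lvert B\rvert$. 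Both are standard proofs of this orientation theorem: the Hall route is shorter once Hall's theorem is granted, while your augmenting-path route is self-contained and effectively algorithmic. You also spell out the compactness reduction (Tychonoff on $\prod_{e\in R}\{e^+,e^-\}$ with the closed conditions $C_v$ and the finite intersection property) that the paper only gestures at, and your observation that closedness of $C_v$ must be checked even for vertices of infinite degree is a worthwhile point the paper leaves implicit.
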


\begin{proof} If $\Gamma = (A; R)$ is $k$-orientable and $B$ is a finite subset of $A$, then then the number of edges in $R^B$ is (at most) the number of directed edges in the induced sub-digraph on $B$ in a $k$-orientation of $\Gamma$. Clearly this is at most $k\lvert B\rvert $.

Note that by a compactness (or K\"onig's Lemma) argument, it suffices to prove the converse in the case where $A$ is finite, so we now assume this. For a $k$-orientation of $\Gamma$ we need to choose, for each edge $e = \{a,b\} \in R$ one of the vertices $a,b$ to be the initial vertex of the directed edge. We need to do this so that the resulting digraph has out-valency at most $k$. 

Consider $k$ copies $A \times [k]$ of the vertex set $A$ (where $[k] = \{1,\ldots, k\}$) and form a bipartite graph $B$ with parts $R$ and $A \times [k]$. We have an edge $(e, (a,l))$ (where $e \in R$ and $a \in A$, $l \leq k$) in this bipartite graph if and only if $a \in e$. We show that the condition of Hall's Marriage Theorem holds and hence there is a matching of $R$ into $A \times [k]$. Indeed, if $I \subseteq R$, let $C \subseteq A$ be the union of the edges in $I$. Then the number of vertices in $A\times[k]$ adjacent to $I$ is $k\lvert C\rvert $ and 
$ k\lvert C\rvert \geq \lvert R^C\rvert  \geq \lvert I\rvert ,$
as required. 

Fix a matching in $B$. We orient an edge $e = \{a,b\}$ of $\Gamma$ by taking the directed edge $(a,b)$ precisely when $e$ is matched with some $(a,l)$ under the matching. This is a $k$-orientation.
\end{proof}

We use the following special case of the construction in Theorem~\ref{XDthm}.

\begin{definition} \label{or}  Suppose that $\Gamma = (A; R)$ is a $k$-sparse graph. We let 
\[ X_{\Gamma} = \{ S \subseteq A^2 : (A; S) \mbox{ is a $k$-orientation of } \Gamma\}\]
be the set of $k$-orientations of $\Gamma$. Identifying $S \in X_{\Gamma}$ with its characteristic function, we can view $X_{\Gamma}$ as a subset of $\{0,1\}^{A^2}$. We give the latter the product topology (where $\{0,1\}$ has the discrete topology). We give $X_{\Gamma}$ the subspace topology and refer to it as the \emph{space of $k$-orientations} of $\Gamma$. Note that the automorphism group $\Aut(\Gamma)$ acts continuously on $\{0,1\}^{A^2}$ via its diagonal action on $A^2$ and $X_{\Gamma}$ is invariant under this action. 
\end{definition}

Of course, this depends on the particular $k$, but we omit this dependence in the notation.

\begin{lemma} Suppose $\Gamma = (A; R)$ is a $k$-sparse graph. Then $X_{\Gamma}$ is an $Aut(\Gamma)$-flow.
\end{lemma}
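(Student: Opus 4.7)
The plan is to verify the three things hidden in the phrase ``$\Aut(\Gamma)$-flow'', as defined in the introduction: non-emptiness, compactness (plus Hausdorffness), and continuity of the action.

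First I would observe that $X_\Gamma \neq \emptyset$: since $\Gamma$ is $k$-sparse, Theorem~\ref{23} supplies a $k$-orientation. Hausdorffness is free because $X_\Gamma$ sits inside the Hausdorff product $\{0,1\}^{A^2}$, which is also compact by Tychonoff's theorem. So the core of the proof is to check that $X_\Gamma$ is a closed subset of $\{0,1\}^{A^2}$.

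Being a $k$-orientation of $\Gamma$ amounts to three conjuncts on a subset $S\subseteq A^2$, viewed through its characteristic function on each coordinate:
\begin{itemize}
\item[(i)] for each edge $\{a,b\}\in R$, exactly one of $(a,b)$ or $(b,a)$ lies in $S$;
\item[(ii)] for each pair $(a,b)\in A^2$ with $a=b$ or $\{a,b\}\notin R$, $(a,b)\notin S$;
\item[(iii)] for each vertex $a\in A$ and each $(k+1)$-set $\{b_0,\dots,b_k\}\subseteq A$, it is \emph{not} the case that $(a,b_i)\in S$ for all $i\leq k$.
\end{itemize}
Each condition in (i), (ii), (iii) only constrains finitely many coordinates of $\{0,1\}^{A^2}$, hence defines a clopen set; taking the intersection over all the $\{a,b\}$, all the $a$ and all the $(k+1)$-sets gives a closed set. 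Its intersection is exactly $X_\Gamma$, which is therefore closed and hence compact. The main (minor) subtlety here is clause (iii), where one must recognise that the out-valency bound, though a quantification over all vertices and all $(k+1)$-subsets, is still a conjunction of finite-coordinate conditions.

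Finally, for continuity of $\Aut(\Gamma)\times X_\Gamma \to X_\Gamma$ at a point $(g_0,S_0)$, fix a basic open neighbourhood $U$ of $g_0\cdot S_0$ determined by prescribing the values on finitely many coordinates $(c_1,d_1),\dots,(c_n,d_n)$. Let $V\subseteq \Aut(\Gamma)$ be the (open) set of $g$ with $g^{-1}(c_i)=g_0^{-1}(c_i)$ and $g^{-1}(d_i)=g_0^{-1}(d_i)$ for all $i\leq n$ — such a basic open set exists in the pointwise-convergence topology because specifying $g$ on the finite set $\{g_0^{-1}(c_i),g_0^{-1}(d_i):i\leq n\}$ determines $g^{-1}$ on $\{c_i,d_i:i\leq n\}$. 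Let $W$ be the basic open set in $X_\Gamma$ where $S$ agrees with $S_0$ on $\{(g_0^{-1}(c_i),g_0^{-1}(d_i)):i\leq n\}$. Then $V\cdot W\subseteq U$, giving continuity. Combined with the previous paragraph, $X_\Gamma$ is a non-empty compact Hausdorff $\Aut(\Gamma)$-space with a continuous action, i.e.\ an $\Aut(\Gamma)$-flow.
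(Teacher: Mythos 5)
Your proof is correct and follows essentially the same route as the paper: non-emptiness via Theorem~\ref{23}, and closedness in the compact product $\{0,1\}^{A^2}$ because membership in $X_\Gamma$ is determined by finite-coordinate conditions (the paper phrases this as failures being witnessed on a finite subset, i.e.\ the complement is open). The only difference is that you spell out the continuity of the action explicitly, which the paper treats as already established for the ambient flow $\{0,1\}^{A^2}$ in Definition~\ref{or}.
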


\begin{proof} By Theorem~\ref{23}, $X_{\Gamma}$ is non-empty. We know that $X_{\Gamma}$ is an invariant subspace of the $\Aut(\Gamma)$-flow $Y = \{0,1\}^{A^2}$, so it suffices to observe that it is a closed subspace. But if $S \in Y \setminus X_{\Gamma}$ is not a $k$-orientation of $\Gamma$, then this is witnessed on some finite subset $C$ of $A$. So if $S' \in Y$ agrees with $S$ on $C^2$, then $S' \not\in X_{\Gamma}$. Thus $Y \setminus X_{\Gamma}$ is open and $X_{\Gamma}$ is closed.
\end{proof}

\subsection{Extremely amenable subgroups}

 Suppose $G$ is a topological group acting by automorphisms on a discrete structure $M$. The action $G \times M \to M$ is continuous if and only if stabilizers in $G$  of points of $M$ are open in $G$. Equivalently, the induced homomorphism $G \to \Aut(M)$ is continuous. In this case, we say that $G$ is acting \emph{continuously (by automorphisms)} on $M$, often omitting the phrase `by automorphisms'. Note that in this case, the induced action of $G$ on a space such as $\{0,1\}^{M^2}$ is also continuous.

The following is the first main, new result of the paper and leads quickly to Theorem~\ref{cex}.

\begin{theorem} \label{sparsethm}Suppose $k \in \N$ and $\Gamma = (M; R)$ is a $k$-sparse graph in which all vertices have infinite valency. Suppose $G$ is a topological group which acts continuously (by automorphisms) on $\Gamma$. If $H \leq G$ is extremely amenable, then $H$ has infinitely many orbits on $M^2$.
\end{theorem}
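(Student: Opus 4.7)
I would argue by contradiction: assume $H$ is extremely amenable and has only finitely many orbits on $M^2$, and derive a contradiction. The first step is to obtain an $H$-invariant $k$-orientation of $\Gamma$. Since $\Gamma$ is $k$-sparse, Theorem~\ref{23} gives $X_\Gamma \neq \emptyset$, and by the discussion after Definition~\ref{or} the space $X_\Gamma$ is a compact $\Aut(\Gamma)$-flow. The continuous $G$-action on $\Gamma$ makes $X_\Gamma$ into a continuous $G$-flow, hence into a continuous $H$-flow, and extreme amenability of $H$ produces an $H$-fixed point $S \in X_\Gamma$; this $S$ is then an $H$-invariant $k$-orientation, so $|N^+(a)| := |\{b : (a,b)\in S\}| \leq k$ for every $a \in M$.

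Next, I would extract an $H$-orbit on $M^2$ with highly asymmetric fibres. Fix any $a \in M$. Because $|N(a)|$ is infinite but $|N^+(a)| \leq k$, the $S$-in-neighbourhood $N^-(a) = \{b : (b,a) \in S\}$ is infinite. The assumption of finitely many $H$-orbits on $M^2$ forces $H_a$ to have only finitely many orbits on $M$, so the $H_a$-invariant set $N^-(a)$ decomposes into finitely many $H_a$-orbits; at least one of them, call it $U$, must be infinite. Picking $b \in U$ and setting $\theta := H \cdot (b,a)$, the $H$-invariance of $S$ gives $\theta \subseteq S$, with
\[
\{b' \in M : (b',a) \in \theta\} = H_a \cdot b = U \qquad (\text{infinite}),
\]
\[
\{a' \in M : (b,a') \in \theta\} = H_b \cdot a \subseteq N^+(b) \qquad (\text{of size } \leq k).
\]

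The heart of the argument, and where I expect the main difficulty, is to convert this asymmetric orbit into a contradiction with the $k$-sparsity of $\Gamma$, using crucially that \emph{every} vertex of $\Gamma$ (not merely those in $\pi_2(\theta)$) has infinite valency. My plan is to iterate: each $y \in \pi_1(\theta)$ has infinite valency in $\Gamma$ but at most $k$ out-edges in $S$, so the same argument applied at $y$ produces a further $S$-orbit of the same asymmetric type, whose bounded-fibre side may lie on a different $H$-orbit of $M$. Because there are only finitely many $H$-orbits on $M$, iterating forces a cycle, yielding a closed sequence of $S$-orbits, each with bounded first-coordinate fibre and infinite second-coordinate fibre. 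The aim is then to exhibit, from a finite truncation of this cyclic data, a finite subgraph of $\Gamma$ whose edge-count exceeds $k$ times its vertex-count, contradicting $k$-sparsity. The delicate step is this counting: a single asymmetric orbit is insufficient---bipartite incidence structures with infinite-degree ``points'' and bounded-degree ``lines'' are themselves $k$-sparse and realise exactly this asymmetry---so the cyclic structure, combined with the infinite-valency assumption on every $H$-orbit, must be used together to produce a sharp enough edge-count.
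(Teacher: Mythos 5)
Your first step is exactly right and matches the paper: extreme amenability of $H$ acting on the compact flow $X_\Gamma$ of $k$-orientations yields an $H$-invariant $k$-orientation $S$, and the observation that the in-neighbourhood $N^-(a)$ is infinite for every $a$ (infinite valency versus out-valency $\leq k$) is the correct starting point; the asymmetric orbit $\theta$ is also correctly described. The gap is in what you call the heart of the argument: the proposed contradiction---exhibiting a finite subgraph of $\Gamma$ with more than $k$ times as many edges as vertices---is unattainable in principle. The graph $\Gamma$ is $k$-sparse by \emph{hypothesis}, and indeed the invariant orientation $S$ you have just produced restricts to a $k$-orientation of every finite subgraph, which already certifies the bound $\lvert E\rvert \leq k\lvert V\rvert$ for all of them. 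So no amount of cyclic orbit data can ``produce a sharp enough edge-count''; the contradiction must be located in the orbit structure of the digraph $(M;S)$, not in edge-counts of $\Gamma$.

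The paper's route, which your set-up is one step away from, is as follows. Assume $K=\Aut(M;S)$ (which contains the image of $H$) has finitely many orbits on $M^2$. Then for each $a$ the stabiliser $K_a$ has finitely many orbits on $M$, uniformly in $a$. Any vertex $b$ reachable from $a$ by a directed $S$-path of length $r$ lies in a $K_a$-orbit of size at most $k^r$, since the set of vertices at directed distance at most $r$ from $a$ is $K_a$-invariant and finite. Hence the set of all vertices reachable from $a$ is a union of finitely many finite $K_a$-orbits, so is finite, and its size is bounded by some $l$ independent of $a$. Take $a$ whose reachable set $A$ attains the maximal size $l$. Infinite valency gives a neighbour $c\notin A$; the edge $\{a,c\}$ must be oriented $c\to a$ (otherwise $c\in A$), so the set reachable from $c$ contains $\{c\}\cup A$ and has size at least $l+1$, contradicting the maximality of $l$. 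Note that this contradicts the finiteness-of-orbits assumption (via the uniform bound $l$), not sparsity. Your backward iteration through in-neighbours and the resulting cycle of orbits does not obviously deliver any contradiction---as you yourself half-suspect in flagging the bipartite example---whereas the forward-reachable set and its maximality are the missing ingredients.
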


\begin{proof}  Consider $G$ acting on the space $X_{\Gamma}$ of $k$-orientations of $\Gamma$. This is a $G$-flow and so, as $H$ is extremely amenable, there is some $S \in X_{\Gamma}$ which is fixed by $H$.  So $H$ is acting as a group of automorphisms of the digraph $(M; S)$.

To finish the proof, it will suffice to show that $K = \Aut(M; S)$ has infinitely many orbits on $M^2$. Suppose not. Then $K$ has finitely many orbits on $M$ and for every $a \in M$ the pointwise stabiliser $K_a$ of $a$ in $K$ has finitely many orbits on $M$. Furthermore, if $b \in M$ and  there is a directed path of length $r$ from $a$ to $b$ in $(M; S)$, then $b$ lies in an orbit of size at most $k^r$ under $K_a$. It follows that there is a  bound $l$, independent of $a$,  on the size of the set of vertices reachable by a directed path starting from $a$. Take the smallest such $l$ and suppose $a$ realises this: so the set $A$ of vertices reachable by a directed path from $a$ (including $a$) is of size $l$.  As $A$ is finite and $a$ has infinite valency in $(M; R)$, there is a vertex $c \not\in A$ which is adjacent to $a$. In $(M; S)$ this edge must be directed from $c$ to $a$. So the set of vertices which are reachable by a directed path starting at $c$ has size at least $l+1$: contradiction. 
\end{proof}

\begin{proof}[Proof of Theorem~\ref{cex}] We note that the variation on the Hrushovski construction in Section~\ref{Homegacat} produces a countable, sparse, $\omega$-categorical graph. It is easy to see that all vertices in this graph have infinite valency, so Theorem~\ref{sparsethm} gives the required result.
\end{proof}

\subsection{Non-amenability}\label{nonambsec} 
In the following, if $G$ is a group acting on a set $M$ and $a \in M$, then $G_a = \{g \in G : ga=a\}$, the stabilizer of $a$ in $G$.

\begin{theorem} \label{38} Suppose $k \in \N$ and $\Gamma = (M; R)$ is a $k$-sparse graph and $G$ is a topological group which acts continuously (by automorphisms) on $\Gamma$.  Suppose there are adjacent vertices $a, b$ in $\Gamma$ such that the $G_a$-orbit containing $b$ and the $G_b$-orbit containing $a$ are both infinite. Then $G$ is not amenable.
\end{theorem}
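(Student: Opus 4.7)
The plan is to use the $\Aut(\Gamma)$-flow $X_\Gamma$ of $k$-orientations introduced in Definition~\ref{or}. Since $G$ acts continuously on $\Gamma$, it acts continuously on $X_\Gamma$, making $X_\Gamma$ a $G$-flow. Suppose for contradiction that $G$ is amenable, so there is a $G$-invariant Borel probability measure $\mu$ on $X_\Gamma$.

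The edge $\{a,b\}$ gives a clopen partition $X_\Gamma = U \sqcup V$, where $U = \{S \in X_\Gamma : (a,b) \in S\}$ and $V = \{S \in X_\Gamma : (b,a) \in S\}$. Since $\mu(U) + \mu(V) = 1$, at least one of these sets has positive measure. The argument is symmetric in the roles of $a$ and $b$, so I may assume $\mu(U) > 0$ and work with the hypothesis that the $G_a$-orbit of $b$ is infinite (the other case uses $V$ and the $G_b$-orbit of $a$).

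Now I exploit the action. For the natural action of $G$ on subsets of $M^2$, one checks $gU = \{S \in X_\Gamma : (ga, gb) \in S\}$. Pick distinct $b_1,\ldots,b_n$ in the $G_a$-orbit of $b$ and choose $g_i \in G_a$ with $g_i b = b_i$. Then
\[ g_i U = \{S \in X_\Gamma : (a, b_i) \in S\},\]
the set of orientations in which the edge $\{a,b_i\}$ is directed outward from $a$. By $G$-invariance of $\mu$, each $g_i U$ has measure $\mu(U)$.

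The key combinatorial input is the $k$-sparse / $k$-orientability constraint: in any orientation $S \in X_\Gamma$, the out-valency of $a$ is at most $k$, so
\[ \sum_{i=1}^n \mathbf{1}_{g_i U}(S) \leq k \quad \text{for every } S \in X_\Gamma. \]
Integrating against $\mu$ gives $n\mu(U) = \sum_{i=1}^n \mu(g_i U) \leq k$. Since the $G_a$-orbit of $b$ is infinite, $n$ may be taken arbitrarily large, forcing $\mu(U) = 0$, contradicting $\mu(U) > 0$. No step looks technically hard; the only place to be careful is checking the formula for $gU$ and the symmetric case, where $V$ and the infinite $G_b$-orbit of $a$ together with the bound on out-valency at $b$ yield the same contradiction.
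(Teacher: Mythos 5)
Your proof is correct and is essentially the paper's own argument: both use the flow $X_\Gamma$ of $k$-orientations, the sets $S_{ab_i}$ of orientations directing the edge out of $a$ towards points $b_i$ in the infinite $G_a$-orbit of $b$, and the out-valency bound $k$ to integrate the indicator functions and derive $n\mu(U)\leq k$, a contradiction. The only cosmetic difference is that you spell out the symmetric case (using $V$ and the $G_b$-orbit of $a$) where the paper simply says ``we may assume $\mu(S_{ab})\neq 0$''.
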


\begin{proof} The following is based on an argument of Todor Tsankov and replaces our original, less general argument.

We show that there is no $G$-invariant Borel probability measure on the $G$-flow  $X_{\Gamma}$. Suppose, for a contradiction, that $\mu$ is such a measure. 

Let $a,b$ be as in the statement and consider the open set $S_{ab} = \{ S \in X_{\Gamma} : (a,b) \in S\}$, the orientations in which this edge is directed from $a$ to $b$. As $S_{ab}\cup S_{ba} = X_{\Gamma}$ we may assume that $\mu(S_{ab}) = p \neq 0$. For $r \in \N$, let $b_1,\ldots, b_r$ be distinct elements of the $G_a$-orbit containing $b$. So $\mu(S_{ab_i}) = p$ for each $i \leq r$. 

Let $s_i$ be the characteristic function of $S_{ab_i}$. Then for every $k$-orientation $S \in X_{\Gamma}$ we have 
\[\sum_{i \leq r} s_i(S) \leq k.\]
Thus 
\[\int_{S \in X_{\Gamma}} \sum_{i \leq r} s_i(S)\, d\mu(S) \leq k.\]
On the other hand we have $$\int_{S\in X_{\Gamma}} s_i(S) d\mu(S) = p,$$ therefore $rp \leq k$. As $p \neq 0$ and $r$ is unbounded, this is a contradiction.
\end{proof}

\begin{corollary} \label{noEPPA} Suppose $k \in \N$ and $(\K; \leq)$ is an amalgamation class of $k$-sparse graphs (possibly carrying extra structure). Let $M$ denote the Fra\"{\i}ss\'e limit of $(\K; \leq)$ and $G = \Aut(M)$. Suppose there are adjacent vertices $a, b$ in $M$ such that the $G_a$-orbit of $b$ and the $G_b$-orbit of $a$ are both infinite. Then $(\K; \leq)$ does not have EPPA.
\end{corollary}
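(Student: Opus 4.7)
The plan is to argue by contradiction, combining Theorem~\ref{thm:KR} with Theorem~\ref{38}. So suppose for contradiction that $(\K; \leq)$ has EPPA. Let $\Gamma = (M; R)$ denote the underlying $k$-sparse graph of $M$ (obtained by reading off the edge relation from the $L$-structure on $M$); since edges are part of the language for $M$, the group $G = \Aut(M)$ acts by graph automorphisms on $\Gamma$, and this action is continuous (stabilisers of vertices are open in $G$).

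Next I would invoke Theorem~\ref{thm:KR}(ii): because $(\K; \leq)$ is an amalgamation class with Fra\"{\i}ss\'e limit $M$ and satisfies EPPA, $G$ is amenable as a topological group. On the other hand, the adjacent vertices $a,b \in M$ are assumed to have the property that the $G_a$-orbit of $b$ and the $G_b$-orbit of $a$ are both infinite. These are precisely the hypotheses of Theorem~\ref{38} applied to the continuous $G$-action on $\Gamma$, so that theorem yields that $G$ is \emph{not} amenable, a contradiction.

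There is essentially no subtle step: the only thing to check is the passage from the possibly richer $L$-structure $M$ to its underlying $k$-sparse graph $\Gamma$, and verifying that the orbit hypotheses transfer. Since $\Aut(M) \leq \Aut(\Gamma)$, every $G$-orbit on $M$ (or on $M^2$) is contained in an $\Aut(\Gamma)$-orbit, but what matters for Theorem~\ref{38} is only the existence of \emph{some} continuous action of a topological group $G$ on $\Gamma$ with the stated orbit property, and here we take precisely $G = \Aut(M)$ acting via the inclusion $\Aut(M) \hookrightarrow \Aut(\Gamma)$ with its own $G_a$- and $G_b$-orbits. Thus both hypotheses of Theorem~\ref{38} are met verbatim, and the contradiction with amenability derived from EPPA completes the proof.
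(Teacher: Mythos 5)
Your proof is correct and uses exactly the same two ingredients as the paper — Theorem~\ref{38} to show $G = \Aut(M)$ is not amenable, and Theorem~\ref{thm:KR} to link amenability to EPPA — differing only in presenting the argument as a contradiction rather than a contrapositive. The extra care you take in checking that $\Aut(M)$ acts continuously on the underlying sparse graph is a reasonable (if routine) elaboration of what the paper leaves implicit.
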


\begin{proof} Note that the graph on $M$ is sparse, so by Theorem~\ref{38}, $G$ is not amenable. It then follows from  Theorem~\ref{thm:KR} that $(\K; \leq)$ does not have EPPA.
\end{proof}

\begin{remark} We could rephrase the assumption on $G$ in the above as a condition on $(\K; \leq)$ and in general, it is a straightforward matter to check this. We will illustrate this below where $(\K; \leq)$ will be a Hrushovski amalgamation class of sparse graphs.
\end{remark}

We note that the arguments in Theorem~\ref{sparsethm} and \ref{38} can be combined to show the following strengthening of Theorem~\ref{cex}.

\begin{corollary} \label{noamenable} Suppose $k \in \N$ and $\Gamma = (M; R)$ is a $k$-sparse graph in which all vertices have infinite valency. Suppose $G$ is a topological group which acts continuously (by automorphisms) on $\Gamma$. If $H \leq G$ is  amenable, then $H$ has infinitely many orbits on $M^2$. In particular, there is a countable  $\omega$-categorical structure $M$ such that $\Aut(M)$ has no co-precompact amenable subgroup. \hfill $\Box$
\end{corollary}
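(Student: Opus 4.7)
My plan is to combine the measure-theoretic argument in the proof of Theorem~\ref{38} with the reachability argument in the proof of Theorem~\ref{sparsethm}. Suppose for contradiction that $H\le G$ is amenable and has only finitely many orbits on $M^2$, and pick an $H$-invariant Borel probability measure $\mu$ on the $H$-flow $X_\Gamma$. The first step is a one-sided version of the Theorem~\ref{38} computation: whenever $\{a,b\}$ is an edge and $H_a\cdot b$ is infinite, picking $r$ distinct elements $b_1,\ldots,b_r$ in that orbit gives $\mu(S_{ab_i})=\mu(S_{ab})$ by $H$-invariance, while the $k$-orientation constraint $\sum_i\mathbf{1}_{S_{ab_i}}\le k$ forces $r\mu(S_{ab})\le k$. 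Letting $r\to\infty$ yields $\mu(S_{ab})=0$ and therefore $\mu(S_{ba})=1$.

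Next I will use the finite-orbit hypothesis to set up a uniform bound. For each $a\in M$ let $F(a)$ be the union of the finite $H_a$-orbits on $M$ and $I(a)=M\setminus F(a)$; since $|F(a)|$ depends only on the $H$-orbit of $a$ and $H$ has only finitely many orbits on $M$, the quantity $C:=\sup_{a\in M}|F(a)|$ is finite. Infinite valency of every vertex makes $N(a)\cap I(a)$ infinite for all $a$, so I can inductively build pairwise distinct vertices $a_0,a_1,\ldots,a_C$ with $a_{i+1}\in N(a_i)\cap I(a_i)$. The first step gives $\mu(S_{a_{i+1}a_i})=1$ for each $i<C$, and since this is a finite family of $\mu$-full events, $\mu$-almost every orientation $S$ contains the directed back-path $a_C\to a_{C-1}\to\cdots\to a_0$.

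To derive the contradiction, I adapt the reachability argument of Theorem~\ref{sparsethm} to the measured setting. Writing $T^r(a_C)$ for the set of vertices reachable from $a_C$ by at most $r$ directed edges in $S$, the deterministic out-degree bound gives $|T^r(a_C)|\le 1+k+\cdots+k^r$, so $\sum_{v\in M}\mu(v\in T^r(a_C))<\infty$. By $H$-invariance of $\mu$, this function of $v$ is constant on each $H_{a_C}$-orbit, hence must vanish on every infinite $H_{a_C}$-orbit. A countable union over $v\in I(a_C)$ and $r\in\N$ then gives $\mathrm{Reach}(a_C)\subseteq F(a_C)$ $\mu$-almost surely; combined with the a.s.\ back-path, the deterministic inclusion $\{a_0,\ldots,a_C\}\subseteq F(a_C)$ holds, contradicting $|F(a_C)|\le C$. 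The ``in particular'' clause follows by taking $M$ to be the $\omega$-categorical sparse graph from the proof of Theorem~\ref{cex}: any co-precompact subgroup of $\Aut(M)$ has finitely many orbits on $M^2$ by $\omega$-categoricity, so by the first part it cannot be amenable.

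The main obstacle I expect is the tension with Theorem~\ref{38}, which is genuinely symmetric and uses the infiniteness of both $G_a\cdot b$ and $G_b\cdot a$ in its WLOG reduction. Here only one side of that hypothesis is available for free, so instead of a one-shot symmetric argument, the plan is to exploit the one-sided vanishing $\mu(S_{ab})=0$ iteratively to build an arbitrarily long $\mu$-a.s.\ back-path, and then use the probabilistic reachability estimate to force its length to exceed the uniform orbit bound $C$. The remaining steps (distinctness of the $a_i$, the countable null-set union, and the transfer from a.s.\ inclusion of deterministic sets to deterministic inclusion) are routine.
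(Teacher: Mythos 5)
Your proof is correct, and it is exactly the combination of the measure‑theoretic argument of Theorem~\ref{38} (the out‑degree constraint forcing $\mu(S_{ab})=0$ whenever $H_a\cdot b$ is infinite) with the reachability bound from Theorem~\ref{sparsethm} that the paper's remark preceding the corollary indicates but does not write out. The uniform bound $C$ on the union of finite $H_a$-orbits, the almost-sure back-path of length $C$, and the almost-sure inclusion $\mathrm{Reach}(a_C)\subseteq F(a_C)$ together supply precisely the missing details, so this is a faithful implementation of the intended argument.
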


\subsection{Key Examples} \label{sec:C0} To illustrate further the method used in the above results, we describe the amalgamation class $(\C_0; \leq_s)$ of $k$-sparse graphs and the associated class $(\D_0; \sleq)$ of $k$-orientations ($k$ is fixed and understood from the context). In the next section, we will see (following \cite{DEtrivial}) that $(\C_0; \leq_s)$ can also be understood as a special case of Hrushovski's predimension construction. The Fra\"{\i}ss\'e limit $M_0$ of $(\C_0; \leq_s)$ is not $\omega$-categorical. However, by using a version of the predimension construction, we can define a strong class $(\C_F; \leq_d)$ of $k$-sparse graphs where the Fra\"{\i}ss\'e limit is $\omega$-categorical.

Fix an integer $k \geq 2$. Formally, we can consider structures in $\C_0$ as $L$-structures where $L$ is a language with a binary relation symbol $R$ for the edges. We consider structures in $\D_0$ as structures in the expanded language $L^+$ which also has a binary relation symbol $S$ for the directed edges (and $R$ still records the undirected edges). 

\begin{definition}\label{def:D0} Let $\D_0$ consist of the finite $k$-oriented digraphs, that is, directed graphs where the out-degree of every vertex is at most $k$.  If $A \in \D_0$ and $B \subseteq A$, we write $B \sleq A$ to mean that if $b \in B$ and $b \to a$ is a directed edge in $A$, then $a \in B$ (so $B$ is closed under successors in $A$). If $C \subseteq A \in \D_0$, 
 we write $\scl_A(C)$ for the successor-closure of $C$ in $A$. So $\scl_A(C)$ is the smallest subset $B$ of $A$ containing $C$ with $B \sleq A$. 
 \end{definition}
 
We let $\C_0$ be the class of $k$-sparse graphs. By Theorem~\ref{23}, this is the class of undirected reducts of $\D_0$. 

\begin{definition} For $A \subseteq B \in \C_0$ we write $A \leq_s B$ if there is a $k$-orientation $B^+$ of $B$ in which $A \sleq B^+$. 
\end{definition}

It is easy to see that $(\C_0; \leq_s)$ is a strong class: essentially we need to verify that if $A \leq_s B \leq_s C \in \C_0$ then $A \leq_s C$. To see this, note that there is a $k$-orientation $C^+$ of $C$ in which $B$ is a successor-closed subset. If we replace the induced orientation on $B$ by any other $k$-orientation of $B$, the result is still a $k$-orientation of $C$. So we choose an orientation of $B$ in which $A$ is successor-closed and obtain an orientation of $C$ in which both $A$ and $B$ are successor-closed.

In the terminology of Theorem~\ref{sexpthm}, the same argument shows:

\begin{theorem} The class $(\D_0; \sleq)$ is a strong expansion of the class $(\C_0; \leq_s)$. Both of these are free amalgamation classes.
\end{theorem}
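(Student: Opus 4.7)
The plan is to verify each clause in turn; everything reduces to out-degree counting once one has arranged for the orientations on the common part $A$ to match. First, Theorem~\ref{23} tells us the $L$-reducts of $\D_0$ are exactly the $k$-sparse graphs, i.e.\ $\C_0$. For the two strong-expansion conditions of Definition~\ref{stex:def}, condition (i) is immediate from the definition of $\leq_s$: if $f(A^+)\sleq B^+$ in $\D_0$, then $B^+$ itself witnesses $f(A)\leq_s B$. For condition (ii), I would simply repeat the argument given in the paragraph preceding the theorem: given $g:A\to B$ strong in $(\C_0;\leq_s)$ and an expansion $A^+$ of $A$, fix any $k$-orientation $C^+$ of $B$ in which $g(A)$ is successor-closed, then replace the induced orientation on $g(A)$ by the transported copy of $A^+$. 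Successor-closedness means no edge between $g(A)$ and $B\setminus g(A)$ is altered, and out-degrees at vertices of $g(A)$ are computed entirely inside the transported $A^+$; so the resulting orientation $B^+$ lies in $\D_0$ and makes $g:A^+\to B^+$ strong.

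Next I would establish free amalgamation for $(\D_0;\sleq)$. Given $A^+\sleq B_1^+$ and $A^+\sleq B_2^+$ in $\D_0$, take $F^+$ to be the disjoint union of $B_1$ and $B_2$ over $A$ equipped with the union of the two directed-edge sets. The only nontrivial check is that out-degrees in $F^+$ remain bounded by $k$. For $v\in B_i\setminus A$, its $F^+$-successors are its $B_i^+$-successors, of which there are at most $k$. For $v\in A$, successor-closedness $A^+\sleq B_i^+$ forces all $B_i^+$-successors of $v$ to lie in $A$, so in each $B_i^+$ they coincide with the $A^+$-successors of $v$; hence the $F^+$-successors of $v$ are precisely its $A^+$-successors, still at most $k$ in number. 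The inclusions $B_i^+\sleq F^+$ follow by the same case split.

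Finally, free amalgamation for $(\C_0;\leq_s)$ reduces to the $\D_0$ case. Given $A\leq_s B_1,B_2$ in $\C_0$, pick any $k$-orientation $A^+\in\D_0$ of $A$; condition (ii) above produces $k$-orientations $B_1^+,B_2^+\in\D_0$ of $B_1,B_2$ with $A^+\sleq B_i^+$. Free-amalgamating these in $\D_0$ gives $F^+$, whose undirected reduct is the free graph amalgam $F$ of $B_1$ and $B_2$ over $A$. Then $F\in\C_0$ and $B_i\leq_s F$ is witnessed by $F^+$. The main (small) obstacle here is precisely that $A\leq_s B_1$ and $A\leq_s B_2$ are witnessed a priori by possibly different orientations of $A$, so one cannot just paste two existing orientations together; passing through a common choice of $A^+$ and lifting via strong expansion is exactly what resolves this, and is the reason the amalgamation of graphs is handled by first amalgamating their orientations.
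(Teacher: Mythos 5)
Your proof is correct and follows essentially the same route as the paper's: the paper invokes "the same argument" (replacing the induced orientation on a successor-closed subset by any other $k$-orientation) for the strong-expansion conditions, declares free amalgamation of $(\D_0;\sleq)$ clear, and deduces the statement for $(\C_0;\leq_s)$ via Theorem~\ref{sexpthm}. You have simply written out in full the out-degree checks and the reduction that the paper leaves implicit, including the correct observation that the free amalgam of graphs is obtained as the reduct of the free amalgam of compatibly chosen orientations.
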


\begin{proof} It is clear that $(\D_0; \sleq)$ is a free amalgamation class. It then follows from Theorem~\ref{sexpthm} that $(\C_0; \leq_s)$ is a free amalgamation class. 
\end{proof}

\begin{definition} Let $M_0$ and $N_0$ denote respectively  the Fra\"{\i}ss\'e limits of $(\C_0; \leq_s)$ and $(\D_0; \sleq)$. 
\end{definition}

Note that by Theorem~\ref{sexpthm}, $M_0$ is the undirected reduct of the $k$-oriented digraph $N_0$, so $\Aut(N_0)$ is a subgroup of $\Aut(M_0)$. However, it is not a co-precompact subgroup. Each $A \in \C_0$ has only finitely many expansions in $\D_0$, but there is no bound on the size of the successor-closure of $f(A)$ in $N_0$ for $\leq_s$-strong embeddings $f : A \to N_0$. We also note that $\D_0$ is a reasonable class of expansions of $(\C_0; \leq_s)$.
 
 \medskip

% We will  make use of the following variation of this (see \cite{DEample}). 
%\begin{definition}
%\label{def:D1}
%Let $\D_1 \subseteq \D_0$ consist of the $k$-oriented digraphs with no directed cycles.
%Let $\C_1$ be the class of undirected reducts of digraphs in $\D_1$.
%\end{definition}
% It is easy to see that $(\D_1; \sleq)$ is a strong amalgamation class, closed under free amalgamation. 
% If $A \subseteq B \in \C_1$, write $A \leq_1 B$ iff there is a $k$-orientation $B^+ \in \D_1$ of $B$  in which $A$ is successor-closed. The same argument as before shows that $(\D_1; \leq_1)$ is a strong class and:
%
%\begin{theorem} \label{16}The class $(\D_1; \sleq)$ is a strong expansion of the class $(\C_1; \leq_1)$. Both of these are free amalgamation classes.
%\end{theorem}
%
%\begin{definition} Let $M_1$ and $N_1$ denote respectively  the Fra\"{\i}ss\'e limits of $(\C_1; \leq_1)$ and $(\D_1; \sleq)$. 
%\end{definition}
%
%Again, by Theorem~\ref{sexpthm}, $M_1$ is the undirected reduct of the $k$-oriented digraph $N_1$. 
%
%
%\begin{remarks} The class $\C_1$ can be described as the finite graphs having the property that every subgraph has a vertex of valency at most $k$. Moreover, for $A \subseteq B \in \C_1$, it can be shown that  $A\leq_1 B $ if and only if for every $C$ with $A \subset C \subseteq B$, the induced subgraph on $C$ has a vertex of valency at most $k$ in $C\setminus A$  (see \cite{DEample}). 
%\end{remarks}
%

\begin{theorem}\label{CorC0} \leavevmode \begin{enumerate} \item The group $\Aut(M_0)$  has no co-precompact extremely amenable subgroup. Equivalently, there is no precompact Ramsey expansion of the strong class $\ordpair{\C_0; \leq_s}$. 
\item The group  $\Aut(M_0)$ is not amenable and the strong class $\ordpair{\C_0; \leq_s}$ does not have EPPA.
\end{enumerate}
\end{theorem}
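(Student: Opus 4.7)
I first establish the key structural fact that every vertex of $M_0$ has infinite valency. For any $a \in M_0$ and any $n\in\N$, the star $S_n$ on the vertex set $\{a, b_1,\ldots, b_n\}$ with edges $\{a, b_i\}$ is $k$-sparse (indeed, $1$-sparse), and the $k$-orientation directing each $b_i \to a$ exhibits $\{a\}$ as successor-closed in $S_n$; hence $\{a\} \leq_s S_n$. By the amalgamation property of $(\C_0;\leq_s)$, the star $S_n$ $\leq_s$-embeds into $M_0$ extending a fixed $\leq_s$-embedding of $\{a\}$, giving $n$ distinct neighbours of $a$.

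For part (2), I apply Theorem~\ref{38}. Fix a strong edge $\{a,b\}\leq_s M_0$; such exists by Fra\"{\i}ss\'e, since the single-edge structure lies in $\C_0$ and is trivially $\leq_s$-closed in itself. By $\leq_s$-homogeneity, all $\leq_s$-strong neighbours of $a$ in $M_0$ lie in the single $G_a$-orbit of $b$, and via a star-with-edge amalgamation analogous to the above there are infinitely many such, so $G_a\cdot b$ is infinite. Symmetrically $G_b\cdot a$ is infinite. Theorem~\ref{38} then yields that $\Aut(M_0)$ is not amenable, and Corollary~\ref{noEPPA} yields that $(\C_0;\leq_s)$ does not have EPPA.

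For part (1), the equivalence of the two formulations follows from Theorem~\ref{thm:KPT}: after adjoining a linear ordering to obtain rigid structures, a precompact Ramsey expansion of $(\C_0;\leq_s)$ yields, via its Fra\"{\i}ss\'e limit $N^+$, a co-precompact extremely amenable subgroup $\Aut(N^+)$ of $\Aut(M_0)$, and conversely. Suppose for contradiction that such a subgroup $H\leq G=\Aut(M_0)$ exists. Then $H$ is amenable, and by the standard co-amenability transfer---an $H$-invariant Borel probability measure on the compact $G$-space $\widehat{G/H}$ (which exists by amenability of $H$) averages via the finite-to-one covering of $G$-orbits by $H$-orbits to a $G$-invariant probability measure on $\widehat{G/H}$, and hence on the universal minimal flow $M(G)$---the ambient group $G$ would itself be amenable. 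This contradicts part (2).

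The main obstacle is the co-amenability transfer step in part (1); while standard, it requires care in the Polish setting. An alternative direct strategy combines Theorem~\ref{sparsethm} with an adaptation of the argument used in Theorem~\ref{38}, applied to $H$-invariant (rather than $G$-invariant) probability measures on $X_{M_0}$: using co-precompactness of $H$ in $G$ together with transitivity of $G$ on $\leq_s$-strong ordered edges, one locates a strong edge $\{a,b\}$ for which both $H_a\cdot b$ and $H_b\cdot a$ are infinite, and then the sparse-orientation integral bound $\sum_{b'\in H_a\cdot b}\mu(S_{ab'})\leq k$ forces $\mu(S_{ab})=\mu(S_{ba})=0$, contradicting $\mu(X_{M_0})=1$.
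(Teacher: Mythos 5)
Your preliminary observations (infinite valency of every vertex of $M_0$) and your proof of part (2) are correct and essentially identical to the paper's argument: exhibit a strong edge whose two stabiliser-orbits are infinite via star amalgamations, then invoke Theorem~\ref{38} and Corollary~\ref{noEPPA}.

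Part (1), however, has a genuine gap: both of your proposed strategies fail. The ``co-amenability transfer'' in your main argument is false. If it were valid, it would show that \emph{every} Polish group with a co-precompact extremely amenable subgroup is amenable; but the automorphism group of the generic triangle-free graph (or of the generic tournament) is non-amenable yet has a co-precompact extremely amenable subgroup coming from the N\v{e}set\v{r}il--R\"odl ordered Ramsey expansion. An $H$-invariant measure on $\widehat{G/H}$ always exists (the Dirac mass at the basepoint $eH$ is $H$-fixed), and there is no averaging procedure that upgrades it to a $G$-invariant one: $G/H$ is a single infinite dense $G$-orbit, not a finite-to-one cover. Your alternative strategy fails at the step ``one locates a strong edge $\{a,b\}$ for which both $H_a\cdot b$ and $H_b\cdot a$ are infinite'': this is impossible in exactly the case you need. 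An extremely amenable $H$ fixes a point of $X_{M_0}$, i.e.\ preserves a $k$-orientation $N$ of $M_0$; for any edge, oriented say $a\to b$ in $N$, the orbit $H_a\cdot b$ is contained in the set of successors of $a$ in $N$ and hence has size at most $k$. So for every edge one of the two orbits is finite and the Theorem~\ref{38}-style integral bound never gets off the ground. (Indeed, whether $\Aut(M_0)$ has a co-precompact \emph{amenable} subgroup is explicitly left open in the paper, which your argument, if correct, would settle.)

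The actual proof must therefore work with a different $G$-orbit and without measures. The paper takes $P$ to be the $G$-orbit of $\leq_s$-strong pairs of \emph{non-adjacent} vertices, notes that an extremely amenable $H$ lies in $\Aut(N)$ for some orientation $N$, and shows directly that $\Aut(N)$ has infinitely many orbits on the single orbit $P$: long directed paths $a=a_0\to a_1\to\cdots\to a_m$ exist with each $(a,a_i)\in P$, and since the set of vertices reachable from $a$ by directed paths of length at most $i$ has size at most $k^{i+1}-1$, the $a_i$ must fall into arbitrarily many orbits under the stabiliser of $a$ in $\Aut(N)$. This contradicts co-precompactness of $H$ in $G$ because it violates finiteness of the number of $H$-orbits on the one $G$-orbit $P$ --- a point your write-up never engages with, since $\Aut(M_0)$ itself has infinitely many orbits on $M_0^2$ and so Theorem~\ref{sparsethm} alone cannot be the contradiction here.
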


\begin{proof}  To reduce notation, let $M$ denote $M_0$ and $G = \Aut(M)$. Let $\ordpair{\C; \leq}$ denote $(\C_0; \leq_s)$.

(1) Let $A \in \C$ consist of two non-adjacent vertices and $P$ be the set of $\leq$-copies of $A$ in $M$. So this is an $\Aut(M)$-orbit on $M^2$. Consider the $G$-flow of $k$-orientations of $M$. As every extremely amenable subgroup of $\Aut(M)$ must fix some element of this, it is enough to show that if $N$ is a $k$-orientation of $M$, then $\Aut(N)$ has infinitely many orbits on $P$. Suppose, for a contradiction, that there are only a finite number $t$ of $\Aut(N)$-orbits on $P$.

Suppose $m \in \N$ is arbitrary. By considering a tree of height $m$ and non-leaf vertices having valency $k+1$ as a $\leq$-substructure of $M$, there is a directed path $Q_m \leq N$ of length $m+1$. Label the vertices as $a=a_0, a_1,\ldots, a_m$ (with $a_{i} \to a_{i+1}$ a directed edge in $N$). If $i \geq 1$ then  $(a,a_i)\leq Q_m$,  so $(a,a_i) \in P$.

We show that if $m$ is large enough in relation to $t$, then the $a_i$ lie in more than $t$ different orbits under the stabiliser $K$ of $a$ in $\Aut(N)$. This is a contradiction.

Let $B(a; i)$ denote the set of vertices $b$ in $N$ for which there is a directed path of length at most $i$ from $a$ to $b$. Thus $a_0,\ldots, a_i \in B(a; i)$ and $\vert B(a; i)\vert \leq k^{i+1}-1$. Let $s(1) = 1$ and $s(n+1) = k^{s(n)+1}$, for $n \in \N$. We show that if $m \geq s(t)$ then the points $a_1,\ldots, a_m$ lie in at least different $K$-orbits. We prove this by induction on $t$, the case $t = 1$ being trivial. If the result holds for $t$ but not for $t+1$ then each of $a_{s(t)+1},\ldots, a_{s(t+1)}$ is in the same orbit as one of the points $a_1,\ldots, a_{s(t)}$. Thus $\{a_1,\ldots, a_{s(t+1)} \} \subseteq B(a; s(t))$. As $s(t+1) > \vert B(a; s(t))\vert$, this is a contradiction.

(2) Let $\{a,b\} \leq M$ with $a$ adjacent to $b$. The $G_a$-orbit containing $b$ and the $G_b$-orbit containing $a$ are both infinite. To see this, note that the graph $B_n$ with vertices $a, b_1,\ldots, b_n$ and edges $ab_i$ is in the class $\C$ and $\{a\} \leq \{a, b_i\} \leq B_n$ (where $n$ is arbitrary). So we can regard $B_n$ as a subgraph of $M$. By the $\leq$-homogeneity of $M$, the vertices $b_1,\ldots, b_n$ are in the same $G_a$-orbit as $b$. As $n$ is arbitrary here, the $G_a$-orbit of $b$ is infinite. Similarly, the $G_b$-orbit of $a$ is infinite. The result now follows from Theorem~\ref{38} and Corollary~\ref{noEPPA}.\end{proof}

\begin{remarks} (1) By the above result and Theorem 1.2 of \cite{Z}, we know that some structure in $(\C_0; \leq_s)$ has infinite Ramsey degree (as defined in Section~\ref{rcsec}). In fact, the argument in the proof of part (1) above shows that, in the notation used there,  $A$ has infinite Ramsey degree in $\ordpair{\C_0; \leq_s}$. A longer argument can be used to show that the structure consisting of a single vertex also has infinite Ramsey degree in $\ordpair{\C_0; \leq_s}$.

(2) It would be interesting to know whether the argument can be extended to show that $\Aut(M_0)$ has no amenable co-precompact subgroups, as in Corollary~\ref{noamenable}.
\end{remarks}

\section{Hrushovski's predimension construction}
\label{hcsec}

In this section we give a short account of Hrushovski's predimension construction of an $\omega$-categorical sparse graph from \cite{Hrpp, Hr}. We will make use of and extend the connection with orientations as in \cite{DEtrivial}. More traditional approaches to the construction can be found in  \cite{DEBonn}, or Wagner's article \cite{Wa}. 

\subsection{Predimension and roots}

Let $k \geq 2$ be a fixed integer and let $(\C_0; \leq_s)$ be the class of all finite $k$-sparse graphs as in Section~\ref{sec:C0}. If $A \in \C_0$, the \emph{predimension} of $A$ is 
\[\delta(A) = k \lvert A \rvert - \lvert R^A\rvert.\]

Note that for a graph in $\C_0$, this predimension is always non-negative. In fact, a graph is in $\C_0$ precisely when $\delta(A) \geq 0$ for all  finite subgraphs $A$. The following extension of Theorem~\ref{23}  is from \cite{DEtrivial}.

\begin{lemma}\label{Hall2} Suppose $A \subseteq B \in \C_0$. Then $A \leq_s B$ if and only if $\delta(A) \leq \delta(C)$ whenever $A \subseteq C \subseteq B$. \hfill$\Box$
\end{lemma}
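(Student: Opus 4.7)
The plan is to prove both directions by working with explicit $k$-orientations, in the same spirit as Theorem~\ref{23}. For the forward direction, I will fix a $k$-orientation $B^+$ of $B$ in which $A$ is successor-closed and count directed edges. Given $A\subseteq C\subseteq B$, every edge of $C$ not already in $R^A$ has at least one endpoint in $C\setminus A$, and successor-closedness of $A$ in $B^+$ forces its \emph{tail} to lie in $C\setminus A$ (an edge from $A$ to $C\setminus A$ cannot have its tail in $A$). Since each vertex has out-degree at most $k$ in $B^+$, this yields $|R^C|-|R^A|\le k\,|C\setminus A|$, which rearranges to $\delta(A)\le\delta(C)$.

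For the converse, suppose $\delta(A)\le\delta(C)$ for every $A\subseteq C\subseteq B$. My plan is to construct a $k$-orientation of $B$ in which $A$ is successor-closed by splitting the edges of $B$ into three groups and orienting each group. First, every edge of $B$ between $A$ and $B\setminus A$ is oriented from the endpoint in $B\setminus A$ to the endpoint in $A$; this is forced by successor-closedness. Next, since $A$ is itself a subgraph of $B\in\C_0$ and hence $k$-sparse, Theorem~\ref{23} orients $R^A$ so that every vertex of $A$ has out-degree at most $k$ within $A$ (and hence in the whole of $B$, by the previous step).

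The remaining task, and the main technical step, is to orient the edges inside $B\setminus A$ so that each $v\in B\setminus A$ has out-degree at most $k-d_A(v)$ within $B\setminus A$, where $d_A(v)$ is the number of $B$-edges from $v$ to $A$. This is a capacity-weighted version of Theorem~\ref{23}, and I will prove it by a Hall's marriage argument: build a bipartite graph between the edges of the induced subgraph $(B\setminus A,R^{B\setminus A})$ and the set $\bigsqcup_{v\in B\setminus A}\{v\}\times[\,k-d_A(v)\,]$, with $(e,(v,i))$ adjacent iff $v\in e$, then show Hall's condition holds and read off the orientation from a matching. Taking $C=A\cup\{v\}$ in the hypothesis gives $d_A(v)\le k$, so the capacities are nonnegative; more generally, for any $S\subseteq B\setminus A$, setting $C=A\cup S$ rewrites the assumed inequality $\delta(A)\le\delta(C)$ as
\[
 |R^S|\ \le\ \sum_{v\in S}\bigl(k-d_A(v)\bigr),
\]
which is exactly the required Hall condition, since a family of $\ell$ edges inside $S$ has neighbourhood of size $\sum_{v\in S}(k-d_A(v))$ in the bipartite graph and $\ell\le|R^S|$.

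The main obstacle is the Hall computation in the last paragraph, but it reduces cleanly to algebraic manipulation of the hypothesis $\delta(A)\le\delta(A\cup S)$; the only mild subtlety is confirming that the three partial orientations combine into a genuine $k$-orientation of $B$, which is immediate from the fact that out-edges from a vertex of $A$ lie in $A$ by construction and out-edges from a vertex $v\in B\setminus A$ split between the $d_A(v)$ forced edges into $A$ and at most $k-d_A(v)$ edges inside $B\setminus A$.
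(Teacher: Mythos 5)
Your proof is correct, and it is exactly the approach the paper intends: the paper states this lemma without proof (deferring to \cite{DEtrivial}) but explicitly frames it as an ``extension of Theorem~\ref{23}'', and your capacity-weighted Hall's marriage argument, with capacities $k-d_A(v)$ on the vertices of $B\setminus A$ and the Hall condition extracted from $\delta(A)\le\delta(A\cup S)$, is precisely that extension. Both directions check out, including the counting argument for the forward implication and the verification that the three partial orientations assemble into a $k$-orientation with $A$ successor-closed.
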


The following gives another way of thinking about predimension.

\begin{definition} 
If  $A \in \D_0$ is a $k$-oriented digraph, we say that a vertex $v\in A$ is a \emph{root} in $A$ if its out-degree $r$ in $A$ is less than $k$ (of course, $k$ is understood from the context here). The
\emph{multiplicity} of $v$ is then $k-r$.
\end{definition} 

Note that if $B \in \D_0$ and $A \sleq B$, then a vertex in $A$ is a root in $A$ if and only if it is a root in $B$. 

 A simple counting argument gives:

\begin{lemma}\label{lem:roots}Given a $k$-oriented graph $A$, its predimension $\delta(A)$ is equal to the sum of 
the multiplicities of its roots in $A$.\hfill$\Box$
\end{lemma}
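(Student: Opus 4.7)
My plan is to prove Lemma~\ref{lem:roots} by a direct double-counting of the directed edges in $A$, using the fact that in a $k$-orientation, the underlying undirected edge set and the directed edge set have the same cardinality.

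First, I would recall the basic setup: since $A \in \D_0$ is a $k$-oriented digraph, every vertex $v \in A$ has out-degree $d^+(v) \leq k$. By the definition of root and multiplicity, a vertex $v$ is a root exactly when $d^+(v) < k$, and its multiplicity is $k - d^+(v)$; for non-root vertices $d^+(v) = k$, so $k - d^+(v) = 0$. Thus the sum of multiplicities of roots is equal to the unrestricted sum
\[ \sum_{v \in A} (k - d^+(v)) = k|A| - \sum_{v \in A} d^+(v). \]

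Next, I would observe that $\sum_{v \in A} d^+(v) = |S^A|$, the total number of directed edges in $A$. Because $A$ is a $k$-orientation of the underlying graph $(A; R^A)$, each undirected edge in $R^A$ corresponds to exactly one directed edge in $S^A$ (this is precisely the ``but not both'' clause in the definition of $k$-orientation), so $|S^A| = |R^A|$. Combining these gives
\[ \sum_{v \text{ a root}} (k - d^+(v)) = k|A| - |R^A| = \delta(A), \]
as required.

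There is no real obstacle here; the lemma is essentially a bookkeeping identity, and the only thing worth being slightly careful about is the bijection between directed and undirected edges, which relies on the asymmetry convention for digraphs stated at the start of Section~\ref{sparsesec} (no loops, and no pair carrying both orientations).
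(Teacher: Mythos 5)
Your proof is correct and is exactly the ``simple counting argument'' the paper alludes to before the lemma (the paper itself omits the details, marking the statement with $\Box$). The double count $\sum_{v}(k-d^+(v)) = k\lvert A\rvert - \lvert S^A\rvert = k\lvert A\rvert - \lvert R^A\rvert = \delta(A)$, together with the observation that non-roots contribute zero, is the intended argument, and you are right to flag the directed/undirected edge bijection as the only point needing care.
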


For Hrushovski's construction of a sparse $\omega$-categorical structure we will need the following definition. 

\begin{definition}  Suppose that $A \subseteq B \in \C_0$. We write $A \leq_d B$ (and say that $A$ is \emph{$d$-closed} in $B$) if $\delta(A) < \delta(C)$ whenever $A \subset C \subseteq B$.
\end{definition}

\begin{definition} \label{def:sdcl} Given a  $k$-orientation $A$ and  a subdigraph $B$ we denote by $\Roots_A(B)$ the set of elements of  the
successor-closure $\scl_A(B)$ of $B$ in $A$ which are roots in $A$. The \emph{successor-$d$-closure} of $B$ in $A$, denoted by $\sdcl_A(B)$,
is the set of all vertices $v\in A$ such that $\Roots_A(v)\subseteq \Roots_A(B)$. If $B$ is successor-$d$-closed
in $A$ we write $B\dleq A$.
\end{definition} 

The terminology is justified by the following lemma.

\begin{lemma}
\label{lem:sdcl}
Suppose $A\in \D_0$ and $B\subseteq A$. Then  $\sdcl_A(B)$ is the smallest
substructure of $A$ containing $B$ that is both successor-closed and $d$-closed.
\end{lemma}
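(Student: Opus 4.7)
The plan is to unpack the claim into four items and verify each in turn. Write $C := \sdcl_A(B)$. I need to check: (i) $B \subseteq C$; (ii) $C$ is successor-closed in $A$; (iii) $C$ is $d$-closed in $A$; and (iv) any successor-closed, $d$-closed subset of $A$ containing $B$ already contains $C$. Items (i) and (ii) are immediate unwindings of the definition: for $v \in B$, $\scl_A(v) \subseteq \scl_A(B)$ gives $\Roots_A(v) \subseteq \Roots_A(B)$, and for $v \in C$ with $v \to w$ in $A$, $\scl_A(w) \subseteq \scl_A(v)$ gives $\Roots_A(w) \subseteq \Roots_A(v) \subseteq \Roots_A(B)$.

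The computational engine is the following refinement of Lemma~\ref{lem:roots}: for any $B' \subseteq A$, splitting the contribution of each $v \in B'$ to $\delta(B')$ (namely $k$ minus the number of $A$-successors of $v$ lying in $B'$) as (multiplicity of $v$ in $A$, which is $0$ on non-roots) plus (number of $A$-successors of $v$ lying outside $B'$), and summing, yields
\[\delta(B') \;=\; \sum_{r \in B',\ r \text{ a root of } A} \m_A(r) \ +\ \#\bigl\{(v,w) : v \in B',\ w \notin B',\ v \to w \text{ in } A\bigr\}.\]
In particular, when $B'$ is successor-closed the boundary term vanishes. Applied to $C$ this gives $\delta(C) = \sum_{r \in \Roots_A(B)} \m_A(r)$, after noting that $\Roots_A(C) = \Roots_A(B)$ (the nontrivial inclusion follows because any root $r$ of $A$ with $r \in C = \sdcl_A(B)$ satisfies $r \in \Roots_A(r) \subseteq \Roots_A(B)$).

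For (iii), take $C \subsetneq C' \subseteq A$ and pick $v \in C' \setminus C$. Because $v \notin \sdcl_A(B)$ there is a root $r$ of $A$ with $r \in \scl_A(v)$ but $r \notin \Roots_A(B)$. Comparing the formulas for $\delta(C')$ and $\delta(C)$, the root-sum difference ranges over roots of $A$ in $C' \setminus \Roots_A(B)$: if $r \in C'$ then $r$ itself contributes $\m_A(r) \geq 1$; if $r \notin C'$, walking along a directed path $v = v_0 \to \cdots \to v_n = r$ and choosing the last index $j$ with $v_j \in C'$ produces an edge $(v_j, v_{j+1})$ contributing $1$ to the boundary term. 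Either way $\delta(C') > \delta(C)$, so $C \dleq A$.

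For (iv), let $C''$ be any successor-closed, $d$-closed subset of $A$ containing $B$, and suppose for contradiction that some $v \in C \setminus C''$. Then $C''' := C'' \cup \scl_A(v)$ strictly contains $C''$ and is still successor-closed. By the formula, $\delta(C''') - \delta(C'')$ equals the sum of $\m_A(r)$ over roots $r$ of $A$ in $\scl_A(v) \setminus C''$. But $\Roots_A(v) \subseteq \Roots_A(B) \subseteq C''$ — the second inclusion because $C''$ is successor-closed and contains $B$ — so the sum is empty and $\delta(C''') = \delta(C'')$, contradicting $C'' \dleq A$. The only real obstacle is the bookkeeping: keeping straight what counts as a "root" in which set, and verifying the identity $\Roots_A(C) = \Roots_A(B)$. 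Once the refined $\delta$-formula is in hand, everything reduces to an edge- and root-counting exercise.
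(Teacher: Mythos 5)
Your proof is correct and follows essentially the same route as the paper's: both parts rest on the root-counting identity for $\delta$ (Lemma~\ref{lem:roots}), with your boundary-term refinement of that identity replacing the paper's step that equality $\delta(E)=\delta(C)$ forces $C$ to be successor-closed in $A$. The minimality argument (showing $\delta(F\cup\scl_A(x))=\delta(F)$ because all relevant roots already lie in $F$) is the same in both.
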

\begin{proof}
It is easy to see that $E = \sdcl_A(B)$ is successor-closed in $A$; we show that it is $d$-closed in $A$.  Suppose  $E \subset C \subseteq A$.  Then $E \sleq C$ and so by Lemma~\ref{lem:roots}, we have $\delta(E) \leq \delta(C)$. If $\delta(E) = \delta(C)$, then no vertex  $x \in C\setminus E$ is a root in $C$, and therefore all such vertices have out-valency $k$ in $C$. It follows that $C \sleq A$. But then $\Roots_A(x) = \Roots_C(x) \subseteq E$ for all $x \in C$ and therefore $C = E$, a contradiction. Thus $E \leq_d A$, as required. 

Now suppose $B \subseteq F \sleq A$ and $F \leq_d A$. If $x \in \sdcl_A(B)$ then $\Roots_A(y) \subseteq F$ for all $y \in \scl_A(x)$. Thus, by Lemma~\ref{lem:roots}, $\delta(F \cup \scl_A(x)) = \delta(F)$ and so $x \in F$. 
\end{proof}

Note that for a digraph in $\D_0$, the property of a subset being $d$-closed is determined by the undirected reduct. 

We also have:

\begin{lemma} \label{48} Suppose $A \subseteq B \in \C_0$. Then $A \leq_d B$ if and only if there is a $k$-orientation $B^+ \in \D_0$ of $B$ where $A \dleq B^+$. 
\end{lemma}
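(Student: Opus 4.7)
The plan is to deduce both implications by combining Lemma \ref{Hall2} (the predimension characterisation of $\leq_s$) with Lemma \ref{lem:sdcl} (which characterises $\dleq$ as being simultaneously successor-closed and $d$-closed). The key observation enabling the transfer between directed and undirected settings is that any $k$-orientation $B^+$ of $B$ has the same vertex set and same underlying edge set as $B$, so $\lvert R^C \rvert$ and hence $\delta(C)$ are the same for every $C \subseteq B$, whether computed in $B$ or in $B^+$. In particular, the property ``$A$ is $d$-closed'' is entirely a statement about the undirected reduct and so is equivalent in $B$ and in $B^+$.

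For the forward direction, I would first note that $A \leq_d B$ implies $A \leq_s B$: the defining strict inequality $\delta(A) < \delta(C)$ for $A \subsetneq C \subseteq B$ gives $\delta(A) \leq \delta(C)$, and the case $A = C$ is trivial. Hence Lemma \ref{Hall2} produces a $k$-orientation $B^+$ of $B$ with $A \sleq B^+$. By the observation above, $A$ is also $d$-closed in $B^+$, so Lemma \ref{lem:sdcl} gives that $A$ coincides with $\sdcl_{B^+}(A)$, that is, $A \dleq B^+$.

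For the converse, suppose $B^+$ is a $k$-orientation of $B$ with $A \dleq B^+$. By Lemma \ref{lem:sdcl}, $A$ is in particular $d$-closed in $B^+$. Transferring this statement about predimensions back to the undirected reduct yields $A \leq_d B$ directly.

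There is really no obstacle here; the content of the lemma is the translation between the predimension-based notion $\leq_d$ and the combinatorial notion $\dleq$, and both pieces of that translation are already encapsulated in Lemmas \ref{Hall2} and \ref{lem:sdcl}. The only thing to be careful about is to record explicitly that $d$-closedness is invariant under choice of orientation, which is what makes the back-and-forth between $B$ and $B^+$ immediate.
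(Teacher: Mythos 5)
Your proof is correct and follows essentially the same route as the paper: both directions rest on Lemma~\ref{Hall2} to produce a $k$-orientation in which $A$ is successor-closed, combined with the observation that $d$-closedness is a property of the undirected reduct. The only (harmless) difference is that where the paper verifies successor-$d$-closedness by a direct roots/predimension computation via Lemma~\ref{lem:roots}, you invoke the characterisation of $\sdcl$ in Lemma~\ref{lem:sdcl}, which packages that same computation.
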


\begin{proof} Suppose $A \dleq B$.  It is enough to show that $\delta(A) > \delta(B)$, assuming $A \neq B$.  Let $b \in B \setminus A$. Then  $\Roots_B(b) \not\subseteq A$, so $\delta(B) > \delta(A)$, by Lemma~\ref{lem:roots}.

Conversely, suppose $A \leq_d B$. By Lemma~\ref{Hall2}, there is a $k$-orientation $B^+$ of $B$ in which $A \sleq B^+$. We claim $A \dleq B^+$. Indeed, if $b \in B$ and $\Roots_B(b) \subseteq A$, then $\delta(\scl_B(b) \cup A) = \delta(A)$, so $b \in A$. 
\end{proof}

\begin{lemma} \label{310}Let $B \in {\C_0}$ and let $\leq$ denote either $\leq_s$ or $\leq_d$. 
\begin{enumerate}
\item If $A \leq B$  and $X \subseteq B$, then $A \cap X \leq X$.
\item If  $A \leq C \leq B$, then $A \leq B$.
\item If  $A_1, A_2 \leq B$, then $A_1 \cap A_2 \leq B$.
\end{enumerate}
\end{lemma}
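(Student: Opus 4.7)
The plan is to reduce both the $\leq_s$ and the $\leq_d$ versions to the \emph{submodular inequality} for $\delta$: for any $Y, Z \subseteq B \in \C_0$,
$$\delta(Y) + \delta(Z) \;\geq\; \delta(Y \cup Z) + \delta(Y \cap Z),$$
which is immediate from $\delta(A) = k|A| - |R^A|$ by an edge count (with equality iff no edge of $B$ has one endpoint in $Y \setminus Z$ and the other in $Z \setminus Y$). Together with Lemma~\ref{Hall2} (characterising $A \leq_s B$ by $\delta(A) \leq \delta(Y)$ for all $A \subseteq Y \subseteq B$) and the parallel strict characterisation of $\leq_d$ coming from its definition, all three parts should fall out of essentially the same submodularity manipulation, the $\leq_d$ case differing only by requiring strict inequalities throughout.

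For (1), I would fix $A \leq B$ and $X \subseteq B$, then take an arbitrary $A \cap X \subseteq C \subseteq X$ and observe that $A \cap C = A \cap X$. Since $A \subseteq A \cup C \subseteq B$, the hypothesis gives $\delta(A) \leq \delta(A \cup C)$, and submodularity yields
$$\delta(C) \;\geq\; \delta(A \cup C) + \delta(A \cap X) - \delta(A) \;\geq\; \delta(A \cap X),$$
which is what is needed. In the $\leq_d$ case, if the containment $A \cap X \subsetneq C$ is proper then $C$ contains an element outside $A$ (because $C \subseteq X$), hence $A \subsetneq A \cup C$ and the first inequality becomes strict by $A \leq_d B$.

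For (2), given $A \leq C \leq B$ and an arbitrary $A \subseteq Y \subseteq B$, I would apply the hypothesis $C \leq B$ to the inclusion $C \subseteq C \cup Y \subseteq B$ to obtain $\delta(C) \leq \delta(C \cup Y)$; submodularity on $C$ and $Y$ then gives $\delta(C \cap Y) \leq \delta(Y)$, and since $A \subseteq C \cap Y \subseteq C$ the assumption $A \leq C$ completes the chain $\delta(A) \leq \delta(C \cap Y) \leq \delta(Y)$. For $\leq_d$ a brief case split is needed: if $Y \subseteq C$ then $A \leq_d C$ supplies strictness directly, while if $Y \not\subseteq C$ then $C \subsetneq C \cup Y$ and the strict inequality $\delta(C) < \delta(C \cup Y)$ from $C \leq_d B$ propagates through submodularity to $\delta(C \cap Y) < \delta(Y)$.

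Part (3) will then follow formally from (1) and (2): applying (1) to $A_1 \leq B$ with $X = A_2$ gives $A_1 \cap A_2 \leq A_2$, and composing with $A_2 \leq B$ via transitivity from (2) yields $A_1 \cap A_2 \leq B$. I expect the only real care to be needed in tracking the strict inequalities for $\leq_d$—in particular the observation in (1) that $A \cap X \subsetneq C \subseteq X$ forces $A \subsetneq A \cup C$, and the case split in (2)—but once these are noted the arguments are routine.
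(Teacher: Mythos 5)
Your proof is correct, but it follows a different route from the paper's. You work entirely with the predimension: the submodular inequality $\delta(Y)+\delta(Z)\geq\delta(Y\cup Z)+\delta(Y\cap Z)$, together with Lemma~\ref{Hall2} for $\leq_s$ and the defining inequality for $\leq_d$, and your manipulations (including the strictness bookkeeping in (1), where $A\cap X\subsetneq C\subseteq X$ forces $A\subsetneq A\cup C$, and the case split in (2)) all check out; part (3) then follows formally as you say. The paper instead argues combinatorially with $k$-orientations: for (1) it takes, via Lemma~\ref{48}, an orientation of $B$ in which $A$ is successor-$d$-closed and tracks roots and successor-closures inside $X$; for (2) it splices an orientation of the smaller structure into an orientation of the larger one. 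Your approach is the one the paper itself flags in the remark following Lemma~\ref{312} as the ``original'' method, and it has the advantage of working verbatim for the general real-valued predimension $\delta(A)=\alpha|A|-|R^A|$, where no orientation calculus is available; the paper's orientation-based proof is tied to integer $k$ but keeps everything inside the orientation framework that is the central tool of the rest of the paper. Both are complete proofs.
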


\begin{proof} We verify these in the case of $\leq_d$. For (1), by Lemma~\ref{48}, there is an orientation $B^+$ of $B$ with $A \dleq B^+$. If $b \in X$ and $\Roots_X(b) \subseteq X \cap A$, then every vertex $c \in \scl_X(b) \setminus (X \cap A)$ has out-valency $k$ in $X$. Thus its successors in $B$ are in $X$. It follows that $\Roots_B(b) \subseteq A$, so $b \in A\cap X$, as required. Similarly, for (2), there is an orientation $B^+$ of $B$ with $A \dleq B^+$ and an orientation $C^+$ of $C$ in which $B \dleq C^+$. Replacing the structure on $B$ in $C^+$ by $B^+$, we obtain a $k$-orientation $C^{++}$ of $C$ in which $B^+ \dleq C^{++}$. But then $A \dleq C^{++}$. So $A \leq_d C$. (3) follows from (1) and (2).
\end{proof}

As in Section~\ref{sec:C0}, we then have:

\begin{lemma} The classes $(\D_0; \dleq)$ and $(\C_0; \leq_d)$ are strong classes. Moreover, $(\D_0; \dleq)$ is a strong expansion of $(\C_0; \leq_d)$. \hfill$\Box$
\end{lemma}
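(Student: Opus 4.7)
The plan is to begin with the key characterization: for $A \subseteq B$ in $\D_0$, we have $A \dleq B$ if and only if $A \sleq B$ and the underlying reducts satisfy $A \leq_d B$. This follows from Lemma~\ref{lem:sdcl}, which identifies $\sdcl_A(B)$ as the smallest substructure containing $B$ that is both successor-closed and $d$-closed (and $d$-closedness depends only on the undirected reduct, since $\delta$ does). With this bridge in place, the strong-class axioms for $\dleq$ reduce to the corresponding statements for $\sleq$ (trivial) and $\leq_d$ (Lemma~\ref{310}).

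Next I would check the two defining properties of a strong class (Definition~\ref{strongstrong}) in each case. For $(\C_0; \leq_d)$, closure under composition is the statement $A \leq_d B \leq_d C \Rightarrow A \leq_d C$, which is Lemma~\ref{310}(2); property~(*) follows from Lemma~\ref{310}(1) by setting $X = B$, since $A \cap B = A$. For $(\D_0; \dleq)$, using the characterization, transitivity $A \dleq B \dleq C \Rightarrow A \dleq C$ splits into successor-closure (a chase: if $a \in A$ and $a \to c$ in $C$, then $c \in B$ by $B \sleq C$, hence $c \in A$ by $A \sleq B$) and the $\leq_d$ part already handled. Property~(*) splits analogously, with Lemma~\ref{310}(1) covering the $\leq_d$ half.

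For the expansion, part~(i) of Definition~\ref{stex:def} is immediate from the characterization. For part~(ii), given a $\leq_d$-strong $g : A \to B$ in $\C_0$ and an expansion $A^+ \in \D_0$ of $A$, I would apply Lemma~\ref{48} to produce some $k$-orientation $B^{++}$ of $B$ in which $g(A) \dleq B^{++}$. The crucial observation is that, since $g(A) \sleq B^{++}$, every edge of $B$ with exactly one endpoint in $g(A)$ must be directed \emph{into} $g(A)$; hence I may freely overwrite the orientation on edges internal to $g(A)$ by the one transferred from $A^+$ along $g$, obtaining a new $k$-orientation $B^+$ of $B$ in which $g(A)$ remains successor-closed and $g : A^+ \to B^+$ is an embedding of $k$-oriented digraphs. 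Since $g(A) \leq_d B$ on underlying graphs, the characterization then yields $g(A^+) \dleq B^+$, completing the verification.

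The only step where anything subtle happens is the surgery in part~(ii): we need both that \emph{some} orientation of $B$ realizes the required closure condition for $g(A)$ (supplied by Lemma~\ref{48}) and that the orientation on edges internal to $g(A)$ may be replaced without disturbing that closure (supplied by the inward-orientation observation above). Everything else is routine bookkeeping on top of Lemma~\ref{310} and the characterization.
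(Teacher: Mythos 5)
Your proposal is correct and follows essentially the same route the paper intends: the strong-class axioms are exactly Lemma~\ref{310}(1),(2) (with $X=B$ giving property (*)), and the expansion property is obtained by the same ``replace the induced orientation on the image'' surgery that the paper uses in the proof of Lemma~\ref{310}(2) and in Section~\ref{sec:C0}, justified by the observation that a successor-closed subset receives all its boundary edges inward. The characterization $A \dleq B \Leftrightarrow (A \sleq B \text{ and } A \leq_d B)$ that you extract from Lemma~\ref{lem:sdcl} is a clean way to organize the bookkeeping, but it is the same argument.
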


The fact that both $\leq_s$ and $\leq_d$ give rise to closures follows from  Lemma~\ref{310}. We shall be particularly concerned with $d$-closure.

\begin{definition} With this notation, if $B \in \C_0$ then Lemma~\ref{310} (3) shows that if $A \subseteq B$ and $S = \{A_1 : A \subseteq A_1 \leq_d B\}$, then $\bigcap S \leq_d B$. So there is a smallest $\leq_d$-subset of $B$ which contains $A$: denote it by $\cl_B^d(A)$. It is easy to see that $\cl^d_B$ is a closure operation on $B$.
\end{definition}

\begin{lemma}\label{312} For $A \subseteq B \in {\C}$ we have $\delta(A) \geq \delta(\cl^d_B(A))$.
\end{lemma}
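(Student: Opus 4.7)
The plan is to exploit the definition of $\cl^d_B(A)$ together with a minimization argument on predimension. Let $C = \cl^d_B(A)$. First I would pick a subset $D \subseteq B$ with $A \subseteq D$ such that $\delta(D)$ is as small as possible, and subject to this, with $|D|$ as large as possible. (Such a $D$ exists because $B$ is finite.)

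The first key step is to verify that this $D$ is $d$-closed in $B$. Indeed, if $D \subsetneq D' \subseteq B$, then minimality of $\delta(D)$ forces $\delta(D') \geq \delta(D)$; equality would contradict maximality of $|D|$, so $\delta(D') > \delta(D)$. Thus $D \leq_d B$. Since $A \subseteq D$, and $C$ is by definition contained in every $\leq_d$-subset of $B$ containing $A$, we obtain $C \subseteq D$.

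The second key step is to apply Lemma~\ref{310}(1) to the inclusion $D \subseteq B$ and the $\leq_d$-subset $C$: this gives $C = C \cap D \leq_d D$. Unpacking the definition of $\leq_d$, this means that either $C = D$, or $\delta(C) < \delta(D)$; in either case $\delta(C) \leq \delta(D)$. Combining with $\delta(D) \leq \delta(A)$ (which holds by the choice of $D$, since $A$ is itself a candidate), we conclude $\delta(\cl^d_B(A)) = \delta(C) \leq \delta(D) \leq \delta(A)$, as required.

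I do not expect a serious obstacle here: the only subtle point is arranging the selection of $D$ so that it really is $d$-closed in $B$, which is handled by the lexicographic minimization (minimize $\delta$, then maximize size). Everything else reduces to direct application of Lemma~\ref{310} and the definition of $\cl^d_B$.
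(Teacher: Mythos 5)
Your proof is correct. Note that the paper actually states Lemma~\ref{312} without proof; the remark immediately following it indicates that the intended route is via the submodularity inequality $\delta(B\cup C) \leq \delta(B)+\delta(C)-\delta(B\cap C)$, which is the classical way these closure facts are established for general (real-valued) predimensions. Your argument is a clean, self-contained alternative: the lexicographic choice of $D$ (minimize $\delta$, then maximize $\lvert D\rvert$) correctly forces $D \leq_d B$, hence $\cl^d_B(A) \subseteq D$, and the chain $\delta(\cl^d_B(A)) \leq \delta(D) \leq \delta(A)$ follows. One small simplification: the appeal to Lemma~\ref{310}(1) is unnecessary, since $C = \cl^d_B(A) \leq_d B$ already gives $\delta(C) < \delta(D)$ directly from the definition of $\leq_d$ applied to $C \subsetneq D \subseteq B$ (or equality if $C = D$). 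What the submodularity route buys is uniformity across the more general predimensions $\delta = \alpha\lvert A\rvert - \lvert R^A\rvert$ mentioned in the paper's remark; your minimization argument also works verbatim there, so nothing is lost.
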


\begin{remarks}  As we already mentioned, the original approach to this works with a more general predimension. Let $\alpha$ be a positive real number and for a graph $A = (A; R^A)$ let $\delta(A) = \alpha \lvert A \rvert - \lvert R^A\rvert$. Note that for $B, C \subseteq A$ we have $\delta(B\cup C) \leq \delta(B)+\delta(C) - \delta(B\cap C)$. Using this, one can prove all of the above properties of $\leq_s$ and $\leq_d$. 
\end{remarks}

\subsection{The $\omega$-categorical case}

\label{Homegacat}

Following \cite{Hrpp}, we will consider  subclasses of $(\C_0; \leq_d)$ in which $d$-closure is uniformly bounded.
 More precisely we use the following definition.

\begin{definition}\label{CF} Let $F : \R^{\geq 0} \to \R^{\geq 0}$ be a continuous, increasing function with $F(x) \to \infty$ as $x \to \infty$, and $F(0) = 0$. Let 
\[\C_F = \{ B \in \C_0 : \delta(A) \geq F(\lvert A \rvert) \mbox{ for all } A \subseteq B\}.\]
\end{definition}

\begin{theorem}\leavevmode \begin{enumerate}
\item If $B \in \C_F$ and $A \subseteq B$ then 
\[\lvert \cl_B^d(A)\rvert \leq F^{-1}(k\lvert A \rvert).\]
\item If $(\C_F; \leq_d)$ is an amalgamation class, then its \Fraisse{} limit $M_F$ is $\omega$-categorical.
\end{enumerate}
\end{theorem}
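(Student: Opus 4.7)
The plan is to prove (1) by a direct predimension computation combined with Lemma~\ref{312}, and then to deduce (2) as an immediate application of the criterion in Remark~\ref{34}.

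For (1), let $A' = \cl_B^d(A)$, so $A \subseteq A' \leq_d B$. By Lemma~\ref{312} applied to $A \subseteq B$, we have $\delta(A') \leq \delta(A)$. Since $A'$ is a substructure of $B \in \C_F$, the defining inequality of $\C_F$ gives $\delta(A') \geq F(|A'|)$. On the other hand, the trivial bound $\delta(A) = k|A| - |R^A| \leq k|A|$ holds since $|R^A| \geq 0$. Chaining these inequalities yields
\[ F(|A'|) \leq \delta(A') \leq \delta(A) \leq k|A|. \]
Because $F$ is continuous, strictly increasing with $F(0) = 0$ and $F(x) \to \infty$, its inverse $F^{-1}$ is well-defined and increasing on $\R^{\geq 0}$, so applying $F^{-1}$ gives $|A'| \leq F^{-1}(k|A|)$, as required.

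For (2), we invoke Remark~\ref{34}. The language is finite relational (just the edge relation $R$), so there are only finitely many isomorphism types of structures of each finite size in $\C_F$. Define $F' : \N \to \N$ by $F'(n) = \lfloor F^{-1}(kn)\rfloor$ (or any integer upper bound). Given $B \in \C_F$ and $A \subseteq B$ with $|A| \leq n$, take $C = \cl_B^d(A)$. Then $A \subseteq C$, $C \leq_d B$, and by part (1), $|C| \leq F^{-1}(k|A|) \leq F^{-1}(kn)$, so $|C| \leq F'(n)$. Thus the hypotheses of Remark~\ref{34} are satisfied for the amalgamation class $(\C_F; \leq_d)$, and its Fra\"{\i}ss\'e limit $M_F$ is $\omega$-categorical.

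There is no significant obstacle: the content lives entirely in (1), and even there the argument is a two-line chain of inequalities once one notices that $\delta(A) \leq k|A|$ combines with Lemma~\ref{312} and the $F$-defining inequality. The only minor care needed is that $F^{-1}$ is well-defined, which is guaranteed by the standing continuity, monotonicity and unboundedness hypotheses on $F$.
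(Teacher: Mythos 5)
Your proposal is correct and follows essentially the same route as the paper: part (1) is the chain $F(\lvert \cl_B^d(A)\rvert) \leq \delta(\cl_B^d(A)) \leq \delta(A) \leq k\lvert A\rvert$ via Lemma~\ref{312} and the defining inequality of $\C_F$, and part (2) is a direct appeal to Remarks~\ref{34}. The paper's proof is just a more compressed version of exactly this argument.
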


\begin{proof} (1) By Lemma~\ref{312} we have $\delta(\cl_B^d(A)) \leq \delta(A) \leq k\lvert A \rvert$. Thus (by definition of $\C_F$) we have $\lvert \cl_B^d(A)\rvert \leq F^{-1}(k\lvert A \rvert)$.

(2) This follows from Remarks~\ref{34}.
\end{proof}

We now provide some examples (taken from \cite{Hrpp}) where $(\C_F; \leq_d)$ is a free amalgamation class (in the sense of Remarks~\ref{freeamcl}).

\begin{example}\label{320} Let $F$ as in Definition~\ref{CF} be such that:
\begin{itemize}
\item $F$ is piecewise smooth;
\item the right derivative $F'$ is non-increasing;
\item $F'(x) \leq 1/x$ for all $x > 0$. 
\end{itemize}
Then we claim that $(\C_F; \leq_d)$ is a free amalgamation class.

Indeed, suppose $A \leq_d B_1, B_2 \in \C_F$ and let $E$ be the free amalgam of $B_1$ and $B_2$ over $A$. We need to show that $E \in \C_F$. Clearly we may assume $A \neq B_i$. 

Suppose $X \subseteq E$. We need to show that $\delta(X) \geq F(\lvert X \rvert)$. Now, $X$ is the free amalgam over $A \cap X$ of $B_1\cap X$ and $B_2 \cap X$ and $A \cap X \leq_d B_i\cap X$ (by Lemma~\ref{310}(1)). So we can assume $X = E$ and check that $\delta(E) \geq F(\lvert E \rvert)$.

Note that $\delta(E) = \delta(B_1)+\delta(B_2) - \delta(A)$ and $\lvert E\rvert = \lvert B_1\rvert + \lvert B_2\rvert - \lvert A \rvert$. 

The effect of the conditions on $F$ is that for $x, y \geq 0$
\[ F(x+y) \leq F(x) + yF'(x) \leq F(x)+ y/x.\]

We can assume that 
\[\frac{\delta(B_2) - \delta(A)}{\lvert B_2\rvert - \lvert A\rvert} \geq \frac{\delta(B_1) - \delta(A)}{\lvert B_1\rvert - \lvert A\rvert}\]
and note that the latter is at least $1/\lvert B_1 \rvert $ (as $\delta$ is integer-valued and $A\leq_d B_1$).

Then 

\begin{eqnarray*}
\delta(E) &=& \delta(B_1) + (\lvert B_2\rvert - \lvert A\rvert)\frac{\delta(B_2) - \delta(A)}{\lvert B_2\rvert - \lvert A\rvert} \\
&\geq& F(\lvert B_1\rvert) +(\lvert B_2\rvert - \lvert A\rvert)/\lvert B_1\rvert \\
&\geq& F(\lvert E \rvert)
\end{eqnarray*}
(taking $x = \lvert B_1\rvert$ and $y = \lvert B_2 \rvert - \lvert A \rvert$). 

\begin{figure}
\centering
\includegraphics{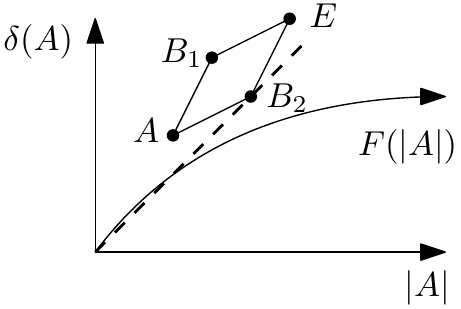}
\caption{Predimension of the free amalgamation of $B_1$ and $B_2$ over $A$.}
\label{fig:predimension}
\end{figure}
This concludes the proof of the claim (see Figure~\ref{fig:predimension}).
\end{example}

\begin{example} \label{321} In order to illustrate the flexibility of this, we use the construction to produce an example of a connected $\omega$-categorical graph whose automorphism group is transitive on vertices and edges, and whose smallest cycle is a $5$-gon.

Let $k = 2$. So we are working with $2$-sparse graphs and the predimension:
\[ \delta(A) = 2 \lvert A \rvert - \lvert R^A \rvert.\]

Take
$$F(1) = 2; F(2) = 3; F(5) = 5; F(k) = \log(k) + 5 - \log(5) \mbox{ for $k \geq 5$}.$$
Then one can check that:
\begin{itemize}
\item The smallest cycle in $\C_F$ is a 5-gon.
\item If $a \in A \in \C_F$ then $\{a\} \leq_d A$.
\item If $\{a,b\} \subseteq B \in \C_F$ is an edge then $\{a,b\} \leq_d B$.
\item $(\C_F; \leq_d)$ is an amalgamation class (the proof of the amalgamation property  in the previous example applies if at least one of $B_1, B_2$ has size $\geq 5$; the other cases can be checked individually).
\item The Fra\"{\i}ss\'e limit $M_F$ is connected. Given non-adjacent $a, b \in M_F$ consider $A = \cl^d(\{a,b\})$. As $\delta(A) \leq \delta(\{a,b\}) = 4$ we have $\lvert A \rvert \leq 3$. So either $A$ is a path of length 2 (with endpoints $a, b$) or $A = \{a,b\}$, so $\{a,b\} \leq_d M_F$. In the latter case, consider a path $B$ of length $3$ with end points $a,b$. Then $\{a,b\}\leq_d B$ so there is a $\leq_d$ copy of $B$ in $M_F$ over $\{a,b\}$. In particular, $a,b$ are at distance $3$ in $M_F$. 
\end{itemize}
\end{example}
Note that the Ramsey properties of classes of graphs with large girth are a difficult combinatorial problem (cf. \cite{N}).

\begin{remark}

Consider an arbitrary graph $G$ and a $2$-orientation $\widehat G$ created by subdividing every
edge $\{a,b\}$ of $G$ by a new vertex $v_{a,b}$ and orienting the subdivision as $v_{\{a,b\}}\to a$, $v_{\{a,b\}}\to b$.
It is easy to see that for $A\subseteq \widehat G$, $\delta(A)>\sqrt{\lvert A\rvert }$ and thus there are choices of $F$ such that for every finite graph $G$ it holds that $\widehat G\in \C_F$.
The class of $\ordpair{\D;\dleq}$ of such representations of graphs is a free amalgamation class and is bi-definable with
the class of all graphs.  (This is not the case with $\C_0$.) 

\end{remark}

\subsection{Results in the $\omega$-categorical case} 

We can now conclude the proof of: 

\medskip

\noindent\textbf{Theorem~\ref{cex}.}  \textit{\quad There exists a countable, $\omega$-categorical structure $M$ with the property that if $H \leq \Aut(M)$ is extremely amenable, then $H$ has infinitely many orbits on $M^2$. In particular, there is no $\omega$-categorical expansion of $M$ whose automorphism group is extremely amenable.}

\medskip

\begin{proof} Let $M_F$ be the $\omega$-categorical $2$-sparse graph constructed in Example~\ref{321}. All vertices of $M_F$ are of infinite valency (as in the proof of Theorem~\ref{CorC0}), so the result follows from Theorem~\ref{sparsethm}. 
\end{proof}

We also note that, as in the proof of Theorem~\ref{CorC0}: 

\begin{corollary} The group $\Aut(M_F)$ is not amenable and the class $(\C_F; \leq_d)$ does not have EPPA.
\end{corollary}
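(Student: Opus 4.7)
The plan is to mimic the proof of Theorem~\ref{CorC0}(2): verify that $M_F$ contains an edge $\{a,b\}$ such that both the $G_a$-orbit of $b$ and the $G_b$-orbit of $a$ are infinite, and then invoke Theorem~\ref{38} for non-amenability and Corollary~\ref{noEPPA} for the failure of EPPA.

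To produce the requisite edge with infinite orbits, I would construct, for each $n$, the star $B_n\in\C_0$ with center $a$ and leaves $b_1,\dots,b_n$. A direct calculation gives $\delta(B_n)=n+2$; for any $C\subseteq B_n$ of size $m$, the predimension $\delta(C)$ equals either $2m$ (if $a\notin C$) or $m+1$ (if $a\in C$). Both are comfortably bounded below by $F(m)$ using the explicit description of $F$ in Example~\ref{321}: on $[2,5]$ the function $F$ is the linear interpolation between $F(2)=3$ and $F(5)=5$, and for $m\geq 5$ it grows only logarithmically, while the two relevant predimensions grow at least linearly in $m$. So $B_n\in\C_F$. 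Moreover, $\{a\}\leq_d B_n$ because $\delta(\{a\})=2$ while every $C$ with $\{a\}\subsetneq C\subseteq B_n$ satisfies $\delta(C)\geq 3$. Since $(\C_F;\leq_d)$ is an amalgamation class with Fra\"{\i}ss\'e limit $M_F$, we may embed $B_n$ as a $\leq_d$-substructure of $M_F$ extending a fixed $\leq_d$-copy of $\{a\}$.

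By the third bullet in Example~\ref{321}, every edge $\{a,b\}$ inside any $C\in\C_F$ satisfies $\{a,b\}\leq_d C$, so in particular each of the edges $\{a,b_i\}$ above is a $\leq_d$-substructure of $M_F$. By the $\leq_d$-homogeneity of $M_F$, the isomorphism $\{a,b_1\}\to\{a,b_i\}$ fixing $a$ extends to an automorphism of $M_F$, so all $b_i$ lie in the $G_a$-orbit of $b_1$; as $n$ is arbitrary, this orbit is infinite. Symmetrically the $G_b$-orbit of $a$ is infinite. Applying Theorem~\ref{38} and Corollary~\ref{noEPPA} completes the proof.

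The only mildly delicate point is checking $B_n\in\C_F$ for the small subsets of sizes $3$ and $4$, where one has to use the explicit piecewise-linear shape of $F$ on $[1,5]$ rather than the asymptotic formula; but $F(3)=11/3<4$ and $F(4)=13/3<5$, which are exactly the predimensions that arise, so nothing goes wrong.
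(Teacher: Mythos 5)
Your proof is correct and follows essentially the same route as the paper: the paper's own justification is the one-line remark after the corollary that $\C_F$ contains an edge $\{a,b\}$ with $\{a\},\{b\}\leq_d\{a,b\}$, so the star argument from Theorem~\ref{CorC0}(2) applies together with Theorem~\ref{38} and Corollary~\ref{noEPPA}. The only difference is that you verify $B_n\in\C_F$ by direct computation with the explicit $F$ of Example~\ref{321} (which forces you to guess the interpolation of $F$ on $[2,5]$, since only $F(1)$, $F(2)$, $F(5)$ and the values for $k\geq 5$ are specified), whereas the paper obtains $B_n\in\C_F$ together with $\{a\}\leq_d B_n$ and $\{a,b_i\}\leq_d B_n$ for free, by iterated free amalgamation of the edges $\{a,b_i\}$ over $\{a\}$ --- a cleaner step that works for any free amalgamation class $\C_F$ containing an edge.
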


 All that we require of our free amalgamation class $\C_F$ is that it contains an `edge' $\{a,b\}$ (with $R(a,b)$ holding). In this case, $\{a\} , \{b\} \leq_d \{a,b\}$ and the argument of Theorem~\ref{CorC0} applies. 
 
 \begin{remarks}   in \cite{Gh}, Zaniar Ghadernezhad gives a direct argument to show that the class $(\C_0; \leq_s)$ fails to have EPPA. It would be interesting to have a similar direct proof for the failure of EPPA in $(\C_F; \leq_d)$.
\end{remarks}

We note that all of this can be carried out with more general versions (as in \cite{Wa}) of the predimension construction. Non-existence of precompact expansions; non-amenability and failure of EPPA all follow in the same way, assuming minor non-triviality conditions. In particular, Hrushovski's strictly stable $\omega$-categorical structures from \cite{Hrpp} show that the structure $M$ in Theorem~\ref{cex} may also be taken to be stable. By contrast, note that if $M$ is a stable structure which is homogeneous for a finite relational language, then $M$ is $\omega$-stable and, by the strong structure theory of $\omega$-categorical, $\omega$-stable theories, it follows that $\Aut(M)$ is amenable and has a co-precompact, extremely amenable subgroup (cf. Corollary 3.9 in \cite{B}).

\section{Meagre orbits} \label{sec:meagre}

\begin{notation}  In this section, $(\C; \leq)$ will denote one of the amalgamation classes $(\C_0; \leq_s)$ or $(\C_F; \leq_d)$ of $k$-sparse graphs from the previous sections. In the latter case, we assume that $(\C_F; \leq_d)$ is a free amalgamation class and that all vertices are $d$-closed in all structures in $\C_F$.  In both cases, we denote by $(\D; \sqleq)$ the corresponding class of orientations, together with the appropriate notion of closure, $\cl^\sqleq$. So, respectively, $(\D; \sqleq)$ is $(\D_0; \sleq)$ or $(\D_F; \dleq)$, where $\D_F$ is the class of all $k$-orientations of graphs in $\C_F$. We will take $k=2$, to simplify the notation.

We let $M$ denote the Fra\"{\i}ss\'e limit (that is, $M_0$ or $M_F$ respectively) and $G = \Aut(M)$. The space of orientations of $M$ is denoted by $X(\D)$, as in Section~\ref{KPTsec}. 
\end{notation}

The $G$-flow $X(\D)$ is not minimal. Nevertheless, we prove:

\begin{theorem} \label{meagreorbits} With the above notation, if $Y$ is a minimal $G$-subflow of $X(\D)$, then all $G$-orbits on $Y$ are meagre in $Y$. 
\end{theorem}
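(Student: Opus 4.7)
The plan is to establish the theorem via the strategy of Lemma~\ref{waplemma}: show that $(\D_1;\sqleq)$ does not have the weak amalgamation property, where $\D_1 \subseteq \D$ is the reasonable subclass with $Y = X(\D_1)$ provided by Lemma~\ref{reasonable}. The proof of Lemma~\ref{waplemma} applies with $\D_1, Y$ replacing $\D, X(\D)$ throughout, using the reasonableness of $\D_1$: if $(\D_1;\sqleq)$ fails WAP then every $G$-orbit on $Y$ is meagre.

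I argue that WAP fails for $(\D_1;\sqleq)$ by contradiction. The joint embedding property is immediate: given $A_1, A_2 \in \D_1$, the $\leq$-homogeneity of $M$ together with the reasonableness of $\D_1$ allows us to realise both as strong substructures of a common $S \in Y$ on disjoint supports, whose union inside $(M,S)$ is a common $\sqleq$-extension in $\D_1$. If WAP also held, the Kechris--Rosendal theorem recorded in the remark following Lemma~\ref{waplemma} would produce a comeagre $G$-orbit $G \cdot S$ in $Y$; write $H = \Aut(M,S) = G_S$.

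On the one hand, the argument in the second paragraph of the proof of Theorem~\ref{sparsethm}, applied to the $k$-orientation $S$ of the $k$-sparse graph $M$ (all of whose vertices have infinite valency in both cases of the notation), shows that $H$ has infinitely many orbits on the vertex set $M$. On the other hand, $G$ acts transitively on $M$ since $\{v\} \leq M$ for every vertex $v$. A contradiction therefore follows once $H$ is shown to be co-precompact in $G$, as the single $G$-orbit on $M$ could then split into only finitely many $H$-orbits. Co-precompactness is to be deduced from the Kechris--Rosendal characterisation of the comeagre-orbit point $S$ by a weak-homogeneity property: any isomorphism between finite $\sqleq$-closed substructures of $(M,S)$ preserving the induced $\D_1$-expansion extends to an element of $H$. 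The $H$-orbit of a tuple $\a \in M^n$ is then determined by the isomorphism type of the induced $\D_1$-expansion on the $\sqleq$-closure of $\a$ in $M$; reasonableness of $\D_1$ ensures only finitely many such types within any given $G$-orbit on $M^n$.

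The principal technical obstacle is to transfer the Kechris--Rosendal weak-homogeneity statement to our strong-class setting $(\D_1;\sqleq)$, and to verify that sufficient back-and-forth can be carried out over $\sqleq$-closures to pin down $H$-orbits by finite-closure data; without this, the chain ``comeagre orbit $\Rightarrow H$ co-precompact'' breaks because the right-uniformity quotient on $G/H$ need not agree with the subspace uniformity $G/H$ inherits through its embedding into $Y$.
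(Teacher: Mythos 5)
Your strategy diverges from the paper's at the crucial point: the paper does \emph{not} argue by contradiction from a hypothetical comeagre orbit, but directly refutes the weak amalgamation property for $(\D';\sqleq)$ over a single vertex (Proposition~\ref{wapprop}), by using the Expansion Property to build two $\sqleq$-extensions of a fixed $A$ whose successor-closures of $a$ contain, respectively, a deep binary tree with no cycles and a modified tree containing a long cycle; these cannot be amalgamated over $a$ alone. Your soft route (WAP $\Rightarrow$ comeagre orbit $\Rightarrow$ co-precompact stabiliser $\Rightarrow$ contradiction with Section~3) is attractive, but as written it has a genuine gap exactly where you flag it: the implication ``comeagre orbit in the minimal metrizable flow $Y$ implies $G_S$ is co-precompact'' is never established. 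This implication is in fact a nontrivial theorem of Melleray--Nguyen Van Th\'e--Tsankov (and is false without minimality: $X(\D_0)$ itself has a comeagre orbit, since $(\D_0;\sleq)$ is an amalgamation class, yet the paper explicitly notes that $\Aut(N_0)$ is \emph{not} co-precompact in $\Aut(M_0)$). Worse, the route you propose for filling it --- pinning down $H$-orbits of tuples by the isomorphism type of the induced expansion on the $\sqleq$-closure, with ``reasonableness'' giving finitely many types --- fails for precisely the reason given after Definition of $N_0$: there is no bound on the size of the $\sqleq$-closure of a finite set in an orientation, so finite-closure data does not yield finitely many $H$-orbits inside a $G$-orbit. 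The correct mechanism is the uniform-space argument of MTT, which genuinely uses minimality; citing that result would repair the step, but your proposal neither cites it nor supplies a substitute.

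There is a secondary inaccuracy: the second paragraph of the proof of Theorem~\ref{sparsethm} shows that $\Aut(M;S)$ has infinitely many orbits on $M^2$, not on the vertex set $M$, so your intended contradiction with transitivity of $G$ on vertices is not available. For $M_F$ this is harmless ($G$ is oligomorphic, so infinitely many $H$-orbits on $M^2$ already contradict co-precompactness), but for $M_0$ the group $G$ itself has infinitely many orbits on $M^2$, and you would instead need the sharper statement proved in Theorem~\ref{CorC0}(1), namely that a single $G$-orbit of strong pairs splits into infinitely many orbits under the automorphism group of any orientation. With the MTT citation and this substitution your argument would go through and would constitute a genuinely different (and shorter) proof than the paper's, at the cost of importing a substantial external theorem in place of the explicit combinatorial witness to the failure of WAP that the paper constructs.
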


This is in sharp contrast to what happens when $G$ has a co-precompact extremely amenable subgroup, where every minimal $G$-flow has a comeagre orbit. 

We begin the proof by noting:

\begin{lemma} Suppose $Y$ is a minimal $G$-subflow of $X(\D)$. Then there is $\D' \subseteq \D$ which is a reasonable class of expansions of $(\C; \leq)$ and is such that $Y = X(\D')$. The class $\D'$ has the Expansion Property with respect to $(\C; \leq)$.
\end{lemma}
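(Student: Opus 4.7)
The plan is to deduce this lemma from two results already established in the excerpt, namely Lemma~\ref{reasonable} and Theorem~\ref{EPthm}. First I would observe that in the setup we have, $\D$ is indeed a reasonable class of expansions of $(\C;\leq)$: this is stated explicitly for $(\C_0;\leq_s)$ in Section~\ref{sec:C0} (noting that finite graphs have only finitely many $k$-orientations, that successor-closedness and $d$-closedness are preserved in induced subdigraphs, and that a strong embedding in the base class lifts to an expansion thanks to Lemma~\ref{Hall2}/Lemma~\ref{48}); the same verification for $(\C_F;\leq_d)$ is essentially the same.

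Given this, Lemma~\ref{reasonable} applies directly to the subflow $Y \subseteq X(\D)$: it produces a subclass $\D' \subseteq \D$ which is itself a reasonable class of expansions of $(\C;\leq)$ and for which $Y = X(\D')$. This yields the first assertion of the lemma.

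For the Expansion Property, the key observation is that the minimality of $Y$ as a subflow of $X(\D)$ transfers to minimality of $X(\D')$ as a $G$-flow in its own right. Indeed, by Theorem~\ref{XDthm}, $X(\D')$ is a compact $G$-flow, and any $G$-subflow $Z \subseteq X(\D')$ is automatically a $G$-subflow of $X(\D)$; by the minimality of $Y = X(\D')$ inside $X(\D)$, we must have $Z = X(\D')$. Hence $X(\D')$ is minimal, and Theorem~\ref{EPthm} then gives that $\D'$ has the Expansion Property with respect to $(\C;\leq)$.

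No real obstacle is expected here: the statement is a straightforward packaging of Lemma~\ref{reasonable} and Theorem~\ref{EPthm}. The only point to be careful about is the bookkeeping in the first step, namely confirming that the conditions (i)--(iv) of Definition~\ref{def:reasonable} are satisfied for $\D$ in each of the two cases $\D_0$ and $\D_F$, which is routine and has already been done (explicitly for $\D_0$, and by the same argument for $\D_F$ using that $d$-closure is determined by the underlying graph).
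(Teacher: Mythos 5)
Your proof is correct and follows exactly the paper's route: the paper also obtains $\D'$ from Lemma~\ref{reasonable} and then deduces the Expansion Property from the minimality of $Y$ together with Theorem~\ref{EPthm}. Your additional remarks checking that $\D$ itself is reasonable and that minimality of $Y$ inside $X(\D)$ makes $X(\D')$ a minimal $G$-flow simply make explicit what the paper leaves implicit.
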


\begin{proof} The first point  is by Lemma~\ref{reasonable} and the second follows from minimality of $Y$ and Theorem~\ref{EPthm}.
\end{proof}

Now let $\D'$ be as in the above. By Lemma~\ref{waplemma}, the theorem will follow if we show that $(\D'; \leq)$ does not have the weak amalgamation property. We will prove:

\begin{proposition} \label{wapprop} Let $A_0 = \{a\} \in \D'$. Then there does not exist $A_0 \leq A \in \D'$ such that whenever $f_i : A \to C_i \in \D'$ are $\leq$-embeddings (for $i = 1,2$) with $f_1(a) = f_2(a)$, there is $D \in \D'$ and $\leq$-embeddings $g_i : C_i \to D$ such that $g_1(f_1(a)) = g_2(f_2(a))$.
\end{proposition}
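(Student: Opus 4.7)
The plan is to argue by contradiction: suppose that $A\in\D'$ with $A_0\leq A$ witnesses the weak amalgamation property over $A_0=\{a\}$, and construct $\leq$-extensions $C_1,C_2\in\D'$ of $A$ for which no amalgam in $\D'$ identifies only $a$.

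First I would show that the hypothesis $\{a\}\leq A$ forces $a$ to have out-degree $0$ in the orientation $A$. Indeed, any successor $v$ of $a$ in $A$ would lie in $\scl_A(\{a\})$, and in both the $\sleq$ case and the $\dleq$ case (using $\scl_A\subseteq\sdcl_A$) this contradicts $\{a\}$ being $\leq$-closed in $A$ unless $v=a$. Since $\leq$ is transitive on both strong classes, the same property is inherited by every $\leq$-extension $A\leq C\in\D'$: the vertex $a$ remains a sink in $C$. Hence amalgamation tension cannot come from the out-neighbourhood of $a$ and must be engineered through its in-neighbours.

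Next I would construct $C_1,C_2$ by attaching to $a$ two distinct in-neighbour structures. To each $i\in\{1,2\}$ I associate a new in-neighbour $b_i$ of $a$ (so $b_i\to a$), together with a finite attached sparse structure $U_i$ into which $b_i$ sends its remaining $k-1$ out-edges, bringing $b_i$'s out-degree in $C_i$ to $k$ and thereby consuming its root budget. The $U_i$ are chosen so that the $\leq$-types of $b_i$ over $\{a\}$ in $\D'$ are distinct and incompatible in the following sense: identifying $b_1$ with $b_2$ in an amalgam would merge their $(k-1)$-sized out-fans, either overrunning the $k$-orientation budget or forcing an identification of $U_1$ with $U_2$ that violates the sparseness/predimension budget (via Lemmas~\ref{lem:roots} and \ref{lem:sdcl}); while keeping $b_1,b_2$ distinct in $D$ yields, at $a$, a configuration of in-neighbour types that is not realised by any member of $\D'$. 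That $C_i$ themselves lie in $\D'$ follows from the Expansion Property and reasonableness of $\D'$: some $t\in Y$ induces a $\leq$-copy of $C_i$ on a strong substructure of $M$, and by property~(iii) of Definition~\ref{def:reasonable} this places $C_i\in\D'$.

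In any proposed amalgam $D\in\D'$ with $g_1(a)=g_2(a)$, the in-neighbours $g_1(b_1)$ and $g_2(b_2)$ of $a$ in $D$ are either identified or distinct: the first case violates the $k$-orientation or produces a forbidden $\dleq$-identification of the $U_i$'s, and the second case violates the $\D'$-type of $a$'s in-neighbourhood in $D$, contradicting $D\in\D'$. Either way we reach a contradiction and conclude that no such WAP-witness $A$ exists.

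The main obstacle is in the middle step: concretely exhibiting the required ``incompatible'' pair $U_1, U_2$ inside $\D'$ itself. The difficulty is that $\D'$ is specified only abstractly as a reasonable class with the Expansion Property associated to a minimal subflow $Y$, so we have no explicit description of its members. Overcoming this relies on the interplay of the root-machinery for $k$-orientations with the sparseness of $\C_0$ (or the predimension bound in $\C_F$), combined with the Expansion Property and minimality of $Y$, to pin down the allowed in-neighbourhood types at a sink vertex in $\D'$ and then produce two such types whose $\D'$-realisations over $\{a\}$ cannot be simultaneously accommodated in a single member of $\D'$.
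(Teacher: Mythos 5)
There is a genuine gap here, and it originates in your very first step. You read ``$\{a\}\leq A$'' as saying that $\{a\}$ is successor-closed (resp.\ successor-$d$-closed) in the given orientation $A$, and conclude that $a$ is a sink in $A$ and in every $\leq$-extension. But for members of the reasonable class $\D'$ the relation $\leq$ is the one inherited from $(\C;\leq)$ via the undirected reducts (see the parenthetical in condition (iii) of Definition~\ref{def:reasonable} and the way $\leq$ is used throughout Lemma~\ref{waplemma}): $f:A\to C$ is a $\leq$-embedding when it is an embedding of oriented graphs whose underlying reduct map is $\leq_s$- (resp.\ $\leq_d$-) strong. Thus $\{a\}\leq A$ only asserts that \emph{some} orientation of the reduct of $A$ makes $a$ a sink, not the given one, and $a$ may perfectly well acquire out-degree $k$ in a $\leq$-extension of $A$ inside $\D'$. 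The paper's proof depends on exactly this possibility: it constructs $\leq$-extensions $C$ of $A$ in $\D'$ in which the successor-closure of $a$ contains large outward-oriented trees (Lemma~\ref{Textn}).

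This misreading then pushes you toward the in-neighbourhood of $a$, which is the wrong place to look for an obstruction. The $k$-orientation condition bounds out-degrees only, so the in-degree of the amalgamated point $e=g_1(f_1(a))=g_2(f_2(a))$ is unconstrained: an amalgam $D$ may simply realise $g_1(C_1)$ and $g_2(C_2)$ meeting only in $e$, with $g_1(b_1)\neq g_2(b_2)$ two distinct in-neighbours carrying different decorations, and there is no ``$\D'$-type of the in-neighbourhood'' for this to violate, so the second horn of your dichotomy has no force. The genuine rigidity lies on the out-side: if every non-leaf vertex of $\scl_{C_i}(a)$ already has out-valency $k$ within $C_i$, then in any amalgam the entire out-ball of $e$ up to the relevant depth is forced to lie inside $g_i(\scl_{C_i}(a))$ \emph{for each $i$ separately}. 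The paper exploits this by producing --- via the Expansion Property together with a free-amalgamation ``many copies'' trick that forces attached trees to be oriented away from their roots in some member of $\D'$ --- two extensions whose successor-closures of $a$ are, respectively, acyclic beyond depth $\vert B\vert$ and contain a $2m$-cycle beyond that depth. This is precisely the incompatible pair whose construction you correctly flag as the unresolved heart of your argument; without it, and with the orientation of the argument reversed from out-neighbours to in-neighbours, the proposal does not yield a proof.
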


We first outline the idea behind the proof of this. Suppose to the contrary that we do have such an $A \in \D'$. Then any finite number of extensions $A \leq C_i \in \D'$ (for $i = 1,\ldots, n$) can be amalgamated \emph{over $A_0$} into some $D \in \D'$.  In particular, the successor-closures of $a$ in the $C_i$ all embed (over $a$) into the successor-closure of $a$ in $D$. We then observe that there are too many possibilities for the successor-closure of $a$ in $C_i$ for this to happen. If $\D = \D'$, then this is very easy to see as we know explicitly the digraphs in $\D'$. In general, we need a result which guarantees that we can `extend' the successor-closure of a point $a \in A$ in particular ways by taking $A \leq C \in \D'$. We do this in a  series of lemmas. The notation is cumulative.

\medskip

Suppose that we have $\{a\} \leq A \in \D'$ contradicting Proposition~\ref{wapprop}. Let $A^- \in \C$ be the undirected reduct of $A$. As $\D'$ satisfies the Expansion Property, there is an extension $A^- \leq B \in \C$ such that every orientation $B^+$ of $B$ in $\D'$ contains a copy of $A$ as a $\leq$-substructure. We fix such a structure $B$.

For sufficiently large $n,m \in \N$, we now describe graphs $T_0(n)$, $T_1(3m)$ (depicted in Figure~\ref{fig:T0}) which we will attach to vertices in $B$ to `extend' the closure of a point in incompatible ways. As $T_0(n)$, we take `one half' of a binary tree of height $n$  together with its root vertex $c$. (For example, we can consider sequences in $\{0,1\}^{< n}$ which are either the empty sequence or start with $0$ and have edge relation given by the initial segment relation.) For $T_1(3m)$, we choose some $m$ such that $\C$ contains an $2m$-cycle (this will be possible for all sufficiently large $m$) and let $T_1(3m)$ be a modification of $T_0(3m)$ obtained by identifying just two vertices at height $2m$ whose shortest paths to the root vertex $c$ meet at height $m$. So in particular, $T_1(3m)$ contains a $2m$-cycle.  Let $T$ denote one of $T_0(n), T_1(n)$ (with $n$ a multiple of 3 in the latter case). Let $T'$ denote the orientation of this in which all edges are directed away from $c$. Note that the only vertices of out-valency less than 2 in $T'$ are the root $c$ and the leaves.
\begin{figure}
\centering
\includegraphics{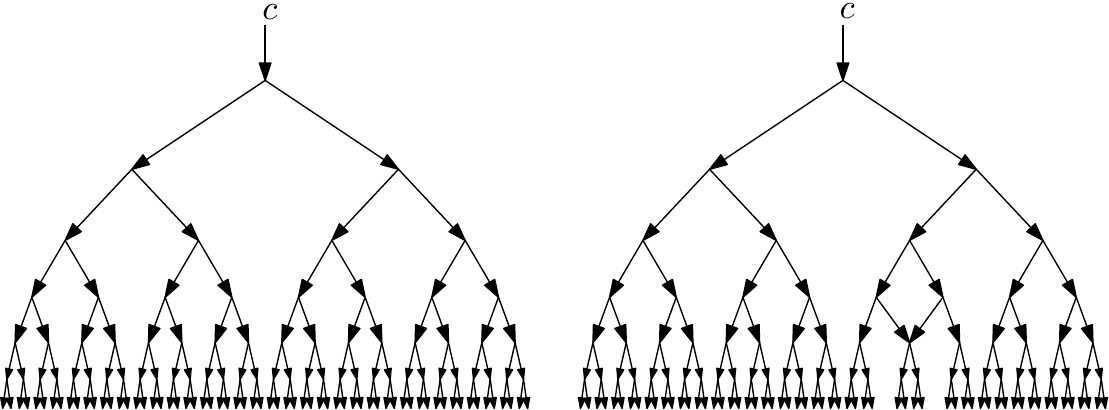}
\caption{$T_0$ and $T_1$ with orientation away from vertex $c$.}
\label{fig:T0}
\end{figure}

Let $S_0$ denote the `left' leaves of $T$ and $S_1$ the `right' leaves. So this is a partition of the leaves of $T$ such that every vertex at height $n-1$ is adjacent to one vertex in each of $S_0$ and $S_1$. 

\begin{lemma} We have $\{c\} \leq T \in \C$ and $S_0, S_1 \leq  \C$.
\end{lemma}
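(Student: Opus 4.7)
The plan is to use the predimension characterisations rather than to exhibit explicit $2$-orientations, since the identified vertex $w$ of $T_1(3m)$ has graph-degree $6$, which makes the natural ``downward'' orientation fail the $2$-orientation bound and the repair is fiddly. By Lemma~\ref{Hall2} we have $A \leq_s B$ iff $\delta(A) \leq \delta(C)$ for all $A \subseteq C \subseteq B$, and by definition $A \leq_d B$ is the strict version (for $A \subsetneq C$). So it suffices to verify these numerical inequalities for every proper extension of $\{c\}$ (respectively $S_0$) inside $T$.

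The key structural input is that both $T_0(n)$ and $T_1(3m)$ are ``nearly trees'': $T_0(n)$ is a tree, and $T_1(3m)$ is obtained by identifying a single pair of vertices, creating exactly one cycle (of length $2m$). Hence any subset $X$ of vertices satisfies $|R^X| \leq |X|$, with equality in $T_1(3m)$ only if $X$ contains that cycle, and $|R^X| \leq |X| - 1$ whenever $X$ contains no cycle. In particular $\delta(X) \geq |X|$, which gives $T \in \C_0$; membership $T \in \C_F$ follows for $n$ (respectively $m$) large enough, using the growth hypotheses on $F$ from Example~\ref{320} or \ref{321}, with the finitely many small subgraphs checked by hand.

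For $\{c\} \leq T$: given $\{c\} \subsetneq C \subseteq T$ with $\delta(\{c\}) = 2$, the bound above yields $\delta(C) \geq |C|$. If $|C| \geq 3$ this is already $>2$; if $|C| = 2$ then $|R^C| \leq 1$ and $\delta(C) \in \{3,4\}$. For $S_0 \leq T$ (the case of $S_1$ being symmetric), write $C = S_0 \sqcup X$ with nonempty $X \subseteq T \setminus S_0$. No edges occur inside $S_0$ since leaves are pairwise non-adjacent; each leaf in $S_0$ has a unique neighbour, namely its parent, and each leaf-parent has a unique $S_0$-child, so the number of $S_0$--$X$ edges equals $|X \cap P|$, where $P$ denotes the set of parents of leaves; the edges inside $X$ number at most $|X|-1$ when $X$ has no cycle and at most $|X|$ otherwise. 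Therefore $|R^C| \leq |X \cap P| + |X|$.

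The main obstacle is the case where $X$ contains the $2m$-cycle of $T_1(3m)$, for then the within-$X$ edge count already saturates the $|X|$ bound. Here the geometry saves us: the cycle sits at heights $m$ through $2m$, whereas $P$ sits at height $3m-1$, so (once $m \geq 2$) the cycle and $P$ are disjoint and $|X \cap P| \leq |X| - 2m$, yielding $|R^C| \leq 2|X| - 2m < 2|X|$. In all remaining cases $|R^C| \leq (|X|-1) + |X| = 2|X|-1$. Either way, $\delta(C) = 2|S_0| + 2|X| - |R^C| > 2|S_0| = \delta(S_0)$, which gives $S_0 \leq_d T$ (and \emph{a fortiori} $\leq_s$).
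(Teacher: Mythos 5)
Your proof is correct, but it takes a genuinely different route from the paper's. The paper argues via explicit $2$-orientations: it takes the orientation $T'$ directed away from $c$ (for the $S_i$) and implicitly the reverse orientation (for $\{c\}$), and then invokes the successor-closure and roots machinery (Lemmas~\ref{lem:roots}, \ref{lem:sdcl} and \ref{48}) to conclude $S_i \dleq T'$ from the fact that every non-leaf vertex has a descendant in each of $S_0$ and $S_1$. You instead verify the predimension inequalities of Lemma~\ref{Hall2} and of the definition of $\leq_d$ directly, by counting edges: the observation that $T$ has cycle rank at most one gives $\lvert R^X\rvert \leq \lvert X\rvert$ for every subset, which handles $\{c\}\leq T$ and $T\in\C$ at once, and the decomposition $\lvert R^{S_0\cup X}\rvert \leq \lvert X\cap P\rvert + \lvert R^X\rvert$ together with the height separation between the $2m$-cycle and the leaf-parents handles $S_0, S_1 \leq T$. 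Your motivating worry about the identified vertex is a fair reading of the text ``identifying just two vertices'', although the paper's assertion that in $T'$ every vertex other than $c$ and the leaves has out-valency exactly $2$ shows the intended construction keeps only two children below the identified vertex (so $T'$ really is a $2$-orientation, as the later Lemma~\ref{Textn} requires); your argument has the merit of being insensitive to this ambiguity, and of actually addressing $T\in\C_F$, which the paper's ``this is straightforward'' leaves to the reader. The trade-off is that the paper's route also produces the orientation $T'$ itself, which is reused in the remainder of Section~\ref{sec:meagre}, whereas your purely numerical verification does not.
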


\begin{proof} This is straightforward. Note that in the orientation $T'$ of $T$ the leaves are the only vertices of out-valency less than 2. With this orientation, every non-leaf vertex has a descendant in $S_0$ and in $S_1$, so by Lemma~\ref{lem:sdcl} we obtain $S_i \leq T$ (in the case where $\C = \C_F$). 
\end{proof}

Take some orientation $B^+ \in \D'$ of $B$.  Let $a_1,\ldots, a_r \in B^+$ be all the vertices of out-degree less than $2$ in $B^+$. Denote the outdegrees of $a_i$ by $k_i$. At each vertex $a_i$ we attach $2-k_i$ copies of $T$ to $B$, using free amalgamation identifying $a_i$ and $c$. Call the resulting graph $E$. 

\begin{lemma} We have that $B \leq E \in \C$.
\end{lemma}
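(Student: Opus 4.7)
The plan is to treat the two cases $\C = \C_0$ and $\C = \C_F$ separately, since the notions $\leq_s$ and $\leq_d$ require different verifications, although in both cases the geometry is the same: every new edge of $E$ lies inside some attached copy of $T$, and each attachment identifies only the single vertex $a_i = c$.

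For the $\C_0$ case I will exhibit a single $2$-orientation $E^+$ of $E$ that proves both $E \in \C_0$ (by being a $2$-orientation) and $B \leq_s E$ (by making $B$ successor-closed in $E^+$). On $B$ put the orientation $B^+$, and on each copy of $T$ put the orientation $T''$ obtained by directing every edge toward $c$. A direct count shows $T''$ is a valid $2$-orientation with $c$ as a sink: in the tree $T_0(n)$ every non-root vertex has out-degree $1$ (its single upward edge), and the single identification used to produce $T_1(3m)$ only raises the out-degree of the merged vertex to $2$. Gluing $B^+$ with these $T''$'s at $a_i = c$ gives $a_i$ out-degree $k_i + (2-k_i)\cdot 0 = k_i \leq 2$, leaves the remaining vertices of $B$ with out-degree $\leq 2$ coming from $B^+$, and leaves the vertices inside each $T$-copy with out-degree $\leq 2$ coming from $T''$. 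Hence $E^+$ is a $2$-orientation and $E \in \C_0$. Moreover every out-edge at $a_i$ in $E^+$ already lies in $B^+$ (the $T''$-copies contribute none), so $B \sleq E^+$ and therefore $B \leq_s E$.

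For the $\C_F$ case the plan is iterated free amalgamation inside $(\C_F; \leq_d)$. By the standing assumption of this section every singleton is $d$-closed in every structure of $\C_F$, so $\{a_i\} \leq_d B$ and $\{c\} \leq_d T$; the membership $T \in \C_F$ is arranged by the choice of $F$ together with the parameters $n, 3m$ in the construction of $T_0, T_1$. Starting from $B$, attach the copies of $T$ one at a time, each step being a free amalgamation of the current partial amalgam with a copy of $T$ over the identified singleton. The free amalgamation property yields that each partial amalgam stays in $\C_F$ with the previous partial amalgam as a $\leq_d$-substructure, and transitivity of $\leq_d$ ensures that the next singleton $\{a_j\}$ is still $d$-closed in the partial amalgam, so the next step is legal. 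After finitely many steps we obtain $E \in \C_F$ with $B \leq_d E$.

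The one place where genuine care is needed is the choice of $T''$ in the $\C_0$ case: the obvious orientation $T'$ defined earlier leaves $c$ with out-degree $1$, which would put successors of $a_i$ outside $B$ and break successor-closure; reversing the tree to point toward $c$ is exactly what is needed so that $a_i = c$ acquires no new successors on attachment. The $\C_F$ part is essentially formal once $d$-closure of singletons and $T \in \C_F$ are recorded.
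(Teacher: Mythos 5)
Your proof is correct and follows essentially the same route as the paper: for $\C_0$ you extend $B^+$ by orienting each attached copy of $T$ towards $c=a_i$ so that $B$ is successor-closed, and for $\C_F$ you invoke free amalgamation over the $d$-closed singletons $\{a_i\}$. Your explicit out-degree check for the reversed orientation (including the merged vertex of $T_1(3m)$) just fills in detail the paper leaves implicit.
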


\begin{proof}Suppose first that  $\C$ is $\C_0$. Note that the orientation  $B^+$ of $B$ can be extended to an orientation of $E$ in $\D$ so that $B$ is successor-closed (by directing all of the edges in the adjoined copies of $T$ towards the $a_i$). This gives the result. In the case where $\C$ is $\C_F$,  we use the fact that we have chosen $F$ so that vertices are $d$-closed in all graphs in $\C_F$. As $E$ is constructed from $B$ by free amalgamation over vertices, the result follows.
\end{proof}

Now let $S^0$ be the union of the vertices $S_0$ in the copies of $T$ in $E$ which we added to $B$; similarly let $S^1$ be the union of the copies of $S_1$.

\begin{lemma} We have $S^0, S^1 \leq E$.
\end{lemma}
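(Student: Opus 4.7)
The plan is to construct a single $2$-orientation $E^+$ of $E$ witnessing both $S^0\leq E$ and $S^1\leq E$: orient $B$ as $B^+$ and each attached copy of $T$ as $T'$. For this to be a valid $2$-orientation, the only nontrivial check is at an attachment vertex $a_j$, where the $B^+$-outvalency $k_j$ combines with $(2-k_j)$ contributions of $1$ at $c$ from the attached copies of $T'$, giving total outvalency $2$; all other outvalencies are inherited unchanged from $B^+$ or $T'$.

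When $(\C,\leq)=(\C_0,\leq_s)$, this already gives $S^0,S^1\sleq E^+$: every vertex of $S^0\cup S^1$ is a leaf, hence has outvalency $0$ in $T'$, so $S^0$ and $S^1$ are vacuously successor-closed. For $(\C,\leq)=(\C_F,\leq_d)$ I would verify $\sdcl_{E^+}(S^0)=S^0$ (the case of $S^1$ being symmetric). Since every non-leaf vertex of $E^+$ has outvalency $2$, the roots of $E^+$ are precisely the tree leaves $S^0\cup S^1$, and $\Roots_{E^+}(S^0)=S^0$. For $v\in E\setminus S^0$ I would produce a root of $\scl_{E^+}(v)$ lying in $S^1$; this is immediate when $v\in S^1$, and when $v$ is an internal tree vertex it follows from $T'$ being oriented away from $c$, so that $\scl_{E^+}(v)$ contains all tree-descendants of $v$ including $S_1$-leaves.

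The substantive case is $v\in B$, where I would show $\scl_{E^+}(v)$ meets some $a_j$; from $a_j$ the above argument yields an $S^1$-leaf in $\scl_{E^+}(v)$. If $v=a_j$ this is immediate; otherwise $v$ has $B^+$-outvalency $2$ by the defining property of the $a_j$'s, so $\scl_{B^+}(v)\supsetneq\{v\}$. If $\scl_{E^+}(v)\subseteq B\setminus\{a_1,\dots,a_r\}$, then $\scl_{E^+}(v)=\scl_{B^+}(v)$ and every vertex in it has $B^+$-outvalency $2$, so by Lemma~\ref{lem:roots} $\delta(\scl_{B^+}(v))=0$; but the standing assumption that singletons are $d$-closed in $\C_F$ gives $\{v\}\leq_d B$, forcing $\delta(\scl_{B^+}(v))>\delta(\{v\})=2$, a contradiction. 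The main obstacle I anticipate is the cycle in $T_1(3m)$, which requires care in the description of $T'$ and in checking that every internal tree vertex reaches both $S_0$- and $S_1$-leaves; I would rely on the properties of $T'$ noted immediately before the lemma.
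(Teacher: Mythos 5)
Your proof is correct and follows essentially the same route as the paper: extend $B^+$ to an orientation $E^+\in\D$ with the attached copies of $T$ directed away from the $a_i$, note that $S^0,S^1$ are then successor-closed (settling the $\C_0$ case), and for $\C_F$ observe that every vertex outside $S^j$ has a root in the other leaf-set, hence lies outside $\sdcl_{E^+}(S^j)$. Your argument that the successor-closure of a vertex of $B$ must reach some $a_j$ (since otherwise $\delta=0$, contradicting the standing assumption that singletons are $d$-closed) is a correct filling-in of a step the paper leaves implicit.
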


\begin{proof} We can extend the orientation $B^+$ of $B$ to an orientation $E^+ \in \D$ of $E$ so that the added copies of $T$ are directed outwards from the vertices $a_i$. Then $S^j$ is successor-closed in  $E^+$ (for $j=1,2$). So in the case where $\C$ is  $\C_0$ we obtain $S^j \leq E$.

For the case where $\C = \C_F$,  we also note that if $x \in E^+\setminus (S^0\cup S^1)$, then $x$ has a root in each of $S^0$ and $S^1$. In particular, $x$ is not in the successor-$d$-closure of $S^0$ or $S^1$. 
\end{proof} 

We now show that some re-orientation of $E^+ \in \D$ in the above proof which preserves the orientation on the added copies of $T$ is actually in $\D'$. 

Let $E_1$ consist of a sufficiently large number of copies of $E$, freely amalgamated over $S^0$. Note that (by free amalgamation) each of the copies is strong in $E_1$. Let $S^2$ be the the union of the sets of vertices corresponding to $S^1$ in all of these copies of $E$. By a similar argument to that used in the lemma, we have that $S^2 \leq E_1$. 
Now let $P$ consist of the free amalgam of sufficiently many copies of $E_1$ over $S^2$. Again, note that each copy is strong in $P$. 

By construction, $P \in \C$ and therefore it has some orientation $P^+ \in \D'$. In $P^+$, one of the copies $E_1'$ of $E_1$ must be oriented so that the vertices in $S^2$ have no successors in $E_1'$ (as long as we took sufficiently many copies of $E_1$ in $P$). Similarly, there is a copy  $E'$ of $E$ in $E_1'$ in which the vertices in $S^1$ have no successors in $E'$. Thus, in $E'$, the copies of $T$ which were added to the copy $B'$ of $B$ are all directed away from the $a_i$. Moreover, by construction of $P$, we have $E' \leq E_1' \leq P^+$, so $E' \in \D'$. 

Let $B'$ denote the (oriented) copy of $B$ inside $E'$. As $B \leq E$, we have that $B' \in 
\D'$. Thus, by the Expansion Property, we can regard $A$ as a $\leq$-substructure of $B'$.
Note that the vertices $a_i$ in $B'$ have the same out-valency  in $B'$ as they have in $B^+$. By Lemma~\ref{lem:roots} it then follows that the $a_i$ are the only vertices in $B'$ which have out-valency less than 2. As $\delta(A) \geq \delta(a) > 0$, at least one of the vertices $a_i$ is in $\cl^\sqleq_{B'}(a)$.  Without loss of generality, we may assume that these are $a_1,\ldots, a_s$. Recall that $T'$ is the orientation of $T$ where edges are directed away from $c$. We then have:

\begin{lemma}\label{Textn} Let $C = \cl^\sqleq_{E'}(A)$. Then $A \leq C \in \D'$ and $\cl^\sqleq_C(a)$ is the free amalgam of $\cl^\sqleq_{B'}(a)$ and copies $T_1',\ldots, T_s'$ of  $T'$  over $a_1,\ldots , a_s$, for some $s > 0$.  The only vertices of out-valency less than 2 in $\cl^\sqleq_C(a)$ are the leaf-vertices of the $T_i'$. 
\end{lemma}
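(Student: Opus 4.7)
The plan is to verify three things in turn: (i) $A\leq C\in\D'$; (ii) $\cl^\sqleq_C(a)=\cl^\sqleq_{E'}(a)$; (iii) the explicit description of $\cl^\sqleq_{E'}(a)$ as the required free amalgam, from which the out-valency claim will follow immediately.

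For (i), the Expansion Property gives $A\leq B'$ and the preceding lemma gives $B\leq E$, so $A\leq E'$. By definition $C=\cl^\sqleq_{E'}(A)\sqleq E'$, and since $E'\in\D'$ and $\D'$ is closed under $\sqleq$-substructures (property (iii) of a reasonable class), we have $C\in\D'$. The $\D$-analogue of Lemma~\ref{310}(1), applied to $A\sqleq E'$ and the subset $C$, gives $A=A\cap C\sqleq C$, i.e. $A\leq C$. For (ii), $a\in A\subseteq C$ yields $\cl^\sqleq_{E'}(a)\subseteq C$; the same intersection argument then shows $\cl^\sqleq_{E'}(a)\sqleq C$, and minimality forces equality with $\cl^\sqleq_C(a)$.

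For (iii), the crucial structural observation is that in $E'$ every original $a_i$ of $B^+$ has out-valency exactly $2$: the $k_i$ original out-edges together with the $2-k_i$ fresh out-edges to the roots of the attached copies of $T$, which by construction are oriented outward from $a_i$. Hence the only vertices of $E'$ of out-valency less than $2$ are the leaves of the attached copies of $T$. Tracing the successor-closure in $E'$ starting from $a$ therefore first sweeps out $\scl_{B'}(a)$, and then, at each $a_j\in\scl_{B'}(a)$ with $k_j<2$, follows the outward-oriented edges through each attached copy to capture the entire copy as a $T'$. Because $E$ was built from $B$ by free amalgamation at the $a_i$, the result $\scl_{E'}(a)$ is exactly the free amalgam of $\scl_{B'}(a)$ with the attached copies of $T'$ at the relevant $a_j$'s. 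In the $\C_0$ case this is already $\cl^\sqleq_{E'}(a)$. In the $\C_F$ case one must further take successor-$d$-closure. Here the set of $a_j$'s in $\sdcl_{B'}(a)$ coincides with the set in $\scl_{B'}(a)$, since each $a_j$ is itself a root of $B^+$ and so $a_j\in\Roots_{B'}(a_j)$ forces $a_j\in\Roots_{B'}(a)\subseteq\scl_{B'}(a)$; call these $a_1,\dots,a_s$. A routine check then shows that $\sdcl_{E'}(a)$ consists of $\sdcl_{B'}(a)$ together with the attached trees at $a_1,\dots,a_s$: the forward inclusion is because the $B'$-root-containment defining $\sdcl_{B'}(a)$ translates verbatim to $E'$ via the leaves of the attached trees, and interior vertices of those trees have their $E'$-roots among the leaves below them, which all lie in $\Roots_{E'}(a)$; the reverse is because any $v\in B'\setminus\sdcl_{B'}(a)$ has a $B'$-root outside $\{a_1,\dots,a_s\}$ whose attached tree contributes leaves to $\Roots_{E'}(v)\setminus\Roots_{E'}(a)$, and similarly for vertices lying inside trees attached at $a_i\notin\{a_1,\dots,a_s\}$.

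The out-valency claim then falls out: since $\cl^\sqleq_C(a)$ is successor-closed in $E'$, out-valencies in $\cl^\sqleq_C(a)$ agree with those in $E'$, and as noted these are less than $2$ only at leaves of attached trees, i.e.\ the leaves of $T_1',\dots,T_s'$. Finally $s>0$ because $\delta(A)\geq\delta(\{a\})>0$ forces (by Lemma~\ref{lem:roots}) at least one root of $B^+$ to lie in $\cl^\sqleq_{B'}(a)$, so at least one copy of $T$ is attached inside the closure. The one piece of real bookkeeping is the $\sdcl$ calculation in the $\C_F$ case, and the underlying mechanism that makes it work is simple: attaching a copy of $T$ at an original root $a_i$ of $B^+$ replaces that single root by the leaves of the attached tree, and this substitution preserves root-containment starting from $a$ exactly when $a_i$ was already in $\Roots_{B^+}(a)$ and breaks it otherwise.
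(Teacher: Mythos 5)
Your parts (ii) and (iii) are essentially correct and supply the routine verification that the paper leaves implicit: the reduction $\cl^\sqleq_C(a)=\cl^\sqleq_{E'}(a)$, the observation that every $a_i$ has out-valency exactly $2$ in $E'$ so that the only roots of $E'$ are the leaves of the attached trees, and the $\sdcl$ computation in the $\C_F$ case (including the point that the roots of $B'$ lying in $\sdcl_{B'}(a)$ are exactly those in $\scl_{B'}(a)$) all match what the surrounding text of the paper intends.

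Part (i), however, rests on a false claim. You assert $A\sqleq E'$, deduced from $A\leq B'$ and $B\leq E$. But $B\leq E$ is a statement about the \emph{undirected} graphs: it says some orientation of $E$ makes $B$ closed, namely the one with the adjoined trees directed \emph{towards} the $a_i$. In the specific orientation $E'$ the trees are directed \emph{away} from the $a_i$, and since $s>0$ at least one $a_i$ lies in $\cl^\sqleq_{B'}(a)\subseteq A$ and has successors in $E'$ inside an attached tree; so $A$ is not successor-closed in $E'$ and $A\not\sqleq E'$. (Indeed, if $A\sqleq E'$ held then $C=\cl^\sqleq_{E'}(A)$ would equal $A$, making the rest of the lemma -- and your own part (iii) -- vacuous; your write-up is internally inconsistent at this point.) For the same reason your conclusion ``$A=A\cap C\sqleq C$'' is false: the $a_i\in A$ have successors in $C$ lying in the $T_i'$. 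The statement ``$A\leq C$'' in the lemma is meant in the sense of the strong class $(\D';\leq)$ used in Lemma~\ref{waplemma} and Proposition~\ref{wapprop}, i.e.\ the inclusion of undirected reducts is $\leq_s$- (resp.\ $\leq_d$-) strong; this, not $\sqleq$, is what is needed to treat the inclusions $A\to C_0$, $A\to C_1$ as $\leq$-embeddings in the weak amalgamation argument. The correct justification works at the level of reducts: $A\leq B\leq E$ gives $A\leq E$, and since $A\subseteq C\subseteq E$, Lemma~\ref{310}(1) yields $A\leq C$ (alternatively, $C$ is the free amalgam of $A$ with near-trees over single vertices, so one can orient the trees towards the $a_i$ to witness $\leq_s$, or compute predimensions for $\leq_d$). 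Your derivation of $C\in\D'$ does survive, since $C\sqleq E'$ implies $C\leq E$ on reducts and then property (iii) of reasonableness applies.
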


We can now finish the proof of Proposition~\ref{wapprop} and therefore conclude the proof of Theorem~\ref{meagreorbits}. 

Recall that $T$ was either $T_0(n)$, a binary tree of height $n$, or $T_1(3m)$, a modification of a binary tree of height $3m$  containing a $2m$-cycle with vertices at heights between $m$ and $2m$. We will choose appropriate $n, m$ in what follows. The graphs $A$ and $B$ are the same in both cases and do not depend on $m, n$, though the orientation $B'$ of $B$ in Lemma~\ref{Textn} may do. 

Apply Lemma~\ref{Textn} in the cases $T= T_0(n)$ and $T = T_1(3m)$  to obtain respectively $A\leq C_0 \in \D'$ and $A \leq C_1 \in \D'$ with properties as in the lemma. Suppose that there are $D \in \D'$ and $\leq$-embeddings $g_i : C_i \to D$ (for $i = 0,1$) with $g_0(a) = g_1(a)$. Let $e = g_0(a) = g_1(a)$ and consider the successor-closure $U$ of $e$ in $D$. Let $U_r$ denote the vertices in $U$ which are reachable from $e$ by an outward-directed path of length at most $r$. 

As all vertices in $C_0$ apart from the `leaf vertices' are of out-valency 2,  it follows that $g_0(\scl_{C_0}(a)) \supseteq U_n$: there can be no  vertices in $D$ reachable by a directed path from $e$ of length at most $n$, other than those already in the image of such a path from $a$ in $C_0$. Note that any vertex in $g_0(\scl_{C_0}(a) \cap B)$ is in $U_{q}$, where $q = \vert B \vert$, therefore $U_n \setminus U_{q}$ contains no (undirected) cycles. 

On the other hand, if we take $m \geq  q$, then $g_1(\scl_{C_1}(a)) \setminus U_q$ contains a $2m$-cycle and is contained in $U_{q+3m}$. Thus, if we take $m \geq \vert B \vert$ and $n \geq \vert B \vert + 3m$, then we obtain a contradiction: $U_n \setminus U_q$ contains no cycles, by the previous paragraph. This finishes the proof of Proposition~\ref{wapprop}. Theorem~\ref{meagreorbits} then  follows.

\section{Amenable and extremely amenable subgroups}
\label{sec:C0sec}\label{CFsec}
Recall the class $\ordpair{\C_0;\leq_s}$ of finite 2-sparse
graphs (from Section~\ref{sec:C0}) and the related class $\ordpair{\D_0;\sleq}$
of 2-oriented finite digraphs. As before, the  \Fraisse-limits of these classes are denoted by  $M_0$ and $N_0$ respectively. We also consider the class $\ordpair{\C_F;\leq_d}$ given
in Section~\ref{Homegacat} and the related class $\ordpair{\D_F;\dleq}$ of its
2-orientations. The \Fraisse-limits of these are denoted by $M_F$ and $N_F$. Recall that $\Aut(M_F)$ is oligomorphic.

In previous sections we showed that $\Aut(M_0)$ and $\Aut(M_F)$ have no co-precompact extremely amenable subgroups. Moreover, $\Aut(M_F)$ has no co-precompact amenable subgroup. In this section, using work in \cite{EHN}, we will complement these results by identifying certain closed subgroups which are maximal amongst the extremely amenable subgroups. Roughly speaking, these arise as automorphism groups of ordered versions of $N_0$ and $N_F$ respectively, but in each case, we need to work with a subclass of the class of orientations (the \textit{fine orientations} in Section~\ref{sec:fine}). The precise result is Theorem~\ref{69}.

We also give some partial results about amenable subgroups (Theorem~\ref{611}).

%
%
%We will consider the same questions as in Section~\ref{C1sec} for these classes.
%Because the underlying combinatorics of $\C_0$ and $\C_F$ is significantly more involved,
%we will make use of our companion paper \cite{EHN} to obtain amenable and extremely amenable
%subgroups of the automorphism groups of the \Fraisse-limits of these classes. To apply \cite{EHN} we need to modify the language to include partial functions describing closures in  $\D_0$ and $\D_F$. This will also turn out to be helpful when considering the maximality (amongst extremely amenable subgroups) of these subgroups. To construct maximal extremely amenable subgroups, we will also consider a subclass of all orientations, with `minimal' closures (defined in Section~\ref{sec:fine}).

\subsection{Fine orientations}
\label{sec:fine}

We introduce the following notion of fine orientations.
\begin{definition}
\label{defn:fine}
Suppose that $A, B \in \D_0$ are 2-orientations of the same underlying undirected graph. We say that $B$ is a \emph{refinement} of $A$ (in the class $\ordpair{\D_0; \sleq}$) if every $\sleq$--closed subset of $A$ is also $\sleq$-closed in $B$. The refinement is \emph{proper} if additionally $A$ is not a refinement of $B$. We say that $A$ is \emph{fine} if it has no proper refinement in $(\D_0; \sleq)$. 

Similarly, we can make the same definitions for the class $\ordpair{\D_F; \dleq}$, working with $\dleq$ instead of $\sleq$. In this case, we refer to \emph{$d$-fine} orientations. 
It is clear that every structure in $(\D_0; \sleq)$ (or in $(\D_F; \dleq)$) has a fine refinement (take a refinement with a maximal number of closed subsets).
\end{definition}

We thank the Referee for a simplification to the proof of the following:

\begin{lemma}
\label{lem:finesubstructures}
For every fine $A\in \D_0$ and $B\sleq A$ it holds that $B$ is also fine. Similarly for every $d$-fine $A\in \D_F$ and $B\dleq A$ it holds that $B$ is also fine.
\end{lemma}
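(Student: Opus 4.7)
The plan is to prove the contrapositive: given a proper refinement $B'$ of $B$ in the relevant class, I will build a proper refinement $A'$ of $A$, contradicting fineness. Define $A'$ to be the orientation of the underlying undirected graph of $A$ obtained by keeping every edge with at least one endpoint in $A\setminus B$ oriented as in $A$, and replacing the orientation on edges inside $B$ by the orientation of $B'$.

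The first step will be to check that $A'\in\D_0$ (respectively $A'\in\D_F$). The key observation is that $B\sleq A$ (which follows from $B\dleq A$ in the $d$-fine case, since $\sdcl$ is successor-closed by Lemma~\ref{lem:sdcl}) forces every edge of $A$ between $B$ and $A\setminus B$ to be directed from $A\setminus B$ into $B$. Hence vertices of $A\setminus B$ retain their $A$-out-degree in $A'$, while each $b\in B$ has out-degree in $A'$ equal to its out-degree in $B'$, which is at most $2$. In the $\D_F$ case the undirected reduct is unchanged, so it stays in $\C_F$.

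The second step verifies that $A'$ refines $A$. For $C\sleq A$, I will first show $C\cap B\sleq B$: for $\sleq$ this is immediate from restricting successors to the substructure $B$, and the $\leq_d$-component needed in the $\dleq$ case follows from Lemma~\ref{310}(1). Since $B'$ refines $B$, $C\cap B$ is still closed in $B'$. A case split on whether a vertex $c\in C$ lies in $A\setminus B$ (successors unchanged in $A'$, hence in $C$) or in $C\cap B$ (successors in $A'$ remain in $B$ by the first step, so in $C\cap B$ since $C\cap B$ is closed in $B'$) then yields $C\sleq A'$. In the $d$-fine case, $C\leq_d A'$ holds automatically since $\leq_d$ depends only on the undirected reduct, which is unchanged.

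Finally, properness comes from a witness $D$ closed in $B'$ but not in $B$. The same case split gives $D\sleq A'$; in the $d$-fine case, Lemma~\ref{310}(2) combined with $D\leq_d B$ (read off from $D\dleq B'$) and $B\leq_d A$ yields $D\leq_d A'$, hence $D\dleq A'$. Conversely, if $D$ were closed in $A$ then all $A$-successors of $D$-vertices would lie in $D$, and Lemma~\ref{310}(1) would give the $\leq_d$-component, producing $D$ closed in $B$ and contradicting the choice of $D$. The main bookkeeping issue is tracking the interaction between the orientation-dependent $\sleq$ and the orientation-independent $\leq_d$ in the $d$-fine case; the geometric content---that successors of $B$-vertices stay inside $B$---is identical in both settings.
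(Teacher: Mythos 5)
Your proposal is correct and follows essentially the same route as the paper: assume $B$ has a proper refinement $B'$, splice $B'$ into $A$ to obtain $A'$, and check that $A'$ is a proper refinement of $A$, contradicting fineness. You spell out several details the paper compresses (the out-degree check for $A'\in\D_0$, the use of Lemma~\ref{310} for the $d$-closed component, and the properness of the refinement $A'$), but the underlying argument is identical.
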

\begin{proof}
For the first statement, suppose $B \sleq A$ has a proper refinement $B'$.  Consider the 2-orientation $A'$ created from $A$ by replacing $B$ by $B'$. As the successor-closure operation is unary, it follows that $A'$ is a proper refinement of $A$, which is a contradiction.

\medskip

For the second statement, suppose that $B \dleq A $ has a proper refinement $B'$. We again consider the structure $A'$ obtained from replacing  $B$ by $B'$ in $A$. This is in the class $\D_F$ and it will suffice to show that it is a refinement of $A$. 

Suppose that  $D\dleq A$. Then $D$ is a $d$-closed subset of $A$ and as this is a property of the undirected reduct, we therefore have $D$ is $d$-closed in $A'$. As in the previous case, $D$ is successor-closed in $A'$ and the result follows. 
\end{proof}

%and $D\notdleq A'$. 
%Consider any vertex $v$ in $\sdcl_{A'}(D)\setminus D$. Because $v\notin D=\sdcl_A(D)$ we know that (by Definition~\ref{def:sdcl})  there is a root $r\in \Roots_A(v)\setminus D$. 
%As $A$ and $A'$ agree on vertices outside $B$ we know that $r\in B$.
%Also, as $B'$ refines $B$ we know that $v\notin B$.  Denote by $v'$ the
%first vertex in $B$ on some oriented path from $v$ to $r$ in $A$.  Now
%$v'\in \sdcl_{B'}(D)$ and thus $\Roots_{A'}(v') \subseteq \Roots_{A'}(D) \cap B'$, that is, $\Roots_{A'}(v') \subseteq \scl_{A'}(D) \cap B'$.   As $D \sleq A$, it follows that $\scl_{A'}(D) \cap B' = \scl_{A'}(D \cap B')$. Therefore  $v'\in \sdcl_{A'}(B'\cap D)$ and so $v'\in
%D\cap B' = D \cap B$. But there is oriented path from $v'$ to $r\notin D$ in $A$ and $D \cap B$ is successor-closed in $A$, so this is a contradiction.
%\end{proof}

We will denote by $\E_0$ the class of all fine orientations in $\D_0$ and by $\E_F$ the class of all $d$-fine orientations in $\D_F$.

\begin{lemma}
\label{lem:fineamalgamation} The class
$\ordpair{\E_0;\sleq}$ is a strong expansion of $\ordpair{\C_0;\leq_s}$,  closed under free
amalgamation. Similarly $\ordpair{\E_F;\dleq}$ is a strong expansion of $\ordpair{\C_F;\leq_d}$
closed under free amalgamation.
\end{lemma}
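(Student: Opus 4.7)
The statement has two parallel parts, one for $\ordpair{\E_0; \sleq}$ over $\ordpair{\C_0; \leq_s}$ and one for $\ordpair{\E_F; \dleq}$ over $\ordpair{\C_F; \leq_d}$; the plan is to work through the $\sleq$-version and note the (essentially formal) modifications needed for $\dleq$ at the end. Condition (i) of Definition~\ref{stex:def} will be immediate, since $\E_0 \subseteq \D_0$ and $\ordpair{\D_0; \sleq}$ is already a strong expansion of $\ordpair{\C_0; \leq_s}$; a $\sleq$-strong map between fine orientations therefore reduces to an $\leq_s$-strong map between undirected reducts.

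For condition (ii), given an $\leq_s$-strong $g : A \to B$ and a fine $A^+ \in \E_0$, I will first use the strong-expansion property of $\ordpair{\D_0; \sleq}$ to produce some $B^+_0 \in \D_0$ with $g : A^+ \to B^+_0$ $\sleq$-strong, and then refine $B^+_0$ without disturbing the orientation on $g(A)$. Let $\mathcal{R}$ be the (finite, nonempty) collection of $\sleq$-refinements of $B^+_0$ whose induced orientation on $g(A)$ equals $A^+$, and choose $B^+ \in \mathcal{R}$ maximal in the refinement preorder. The substantial claim is that $B^+$ is fine in $\D_0$.

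To prove this I will suppose, for contradiction, that $B^+$ has a proper refinement $B^{++} \in \D_0$, and manufacture a proper refinement of $B^+$ that still lies in $\mathcal{R}$. The orientation $\tilde{A}$ that $B^{++}$ induces on $g(A)$ is a refinement of $A^+$ (using $g(A) \sleq B^+ \sleq B^{++}$ together with property (*) of Definition~\ref{strongstrong}); since $A^+$ is fine, $\tilde{A}$ and $A^+$ carry the same $\sleq$-closed subsets. Let $B^{\ast}$ be obtained from $B^{++}$ by overwriting the orientation on $g(A)$ with $A^+$. The routine verifications to perform are that $B^{\ast}$ is a valid $2$-orientation (the successor-closedness of $g(A)$ in $B^{++}$ confines the edits to $g(A)$ and keeps out-degrees $\leq 2$), that $B^{\ast}$ refines $B^+$ (for $C \sleq B^+$, $C \cap g(A) \sleq A^+$, so the rewritten out-edges stay inside $C$), and that the refinement is proper (a witness $D \sleq B^{++}$ with $D \notsleq B^+$ remains $\sleq$-closed in $B^{\ast}$, via $D \cap g(A) \sleq \tilde{A}$ and the shared $\sleq$-closed subsets with $A^+$); clearly $B^\ast \in \mathcal{R}$. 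This contradicts maximality.

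For closure under free amalgamation, take $A^+ \sleq B_1^+, B_2^+$ all in $\E_0$ and form the free amalgam $E^+$ in $\ordpair{\D_0; \sleq}$, with $B_i^+ \sleq E^+$. I will show $E^+$ is fine by taking any $\sleq$-refinement $E^{++}$ of $E^+$ and verifying the reverse refinement: for $D \sleq E^{++}$, each $D \cap B_i$ is $\sleq$-closed in the restriction of $E^{++}$ to $B_i$, which is a refinement of $B_i^+$, so by fineness of $B_i^+$ we get $D \cap B_i \sleq B_i^+$; and since $B_i \sleq E^+$, the out-neighbours of any vertex of $B_i$ in $E^+$ coincide with those in $B_i^+$, forcing $D \sleq E^+$. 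The $\dleq$-version is formally identical because $d$-closure is determined by the undirected reduct, so refinement and restriction to substructures behave the same way, and $\ordpair{\D_F; \dleq}$ is itself a free amalgamation class. The main obstacle is the construction of $B^\ast$ in (ii): the subtlety is that $\tilde{A}$ need not equal $A^+$ as an oriented structure, and the fineness of $A^+$ must be invoked twice—to transfer $\sleq$-closed subsets back and forth between $\tilde{A}$ and $A^+$—in order to secure both $\mathcal{R}$-membership and properness of the new refinement.
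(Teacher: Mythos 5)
Your proof is correct, but it takes a noticeably different route from the paper's in both halves. For condition (ii) of Definition~\ref{stex:def}, the paper passes to an arbitrary fine refinement of an orientation of $B$ in which the image of $A$ is closed and then swaps in $A^+$, asserting (by analogy with Lemma~\ref{lem:finesubstructures}) that the result is still fine; your maximality argument over the family $\mathcal{R}$ performs the same swap but inside a contradiction, and in doing so it makes explicit a point the paper elides: two fine orientations of the same graph need not have the same closed subsets, so one must transfer closed sets both ways between $\tilde A$ and $A^+$, using that $\tilde A$ refines the fine $A^+$ --- exactly the double use of fineness you flag at the end. For closure under free amalgamation, the paper argues by contradiction with a witness $D$ and an analysis of $\Roots_C(D)$, whereas you show directly that any refinement of $E^+$ is improper by restricting to the $B_i$ and invoking their fineness; this is cleaner. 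The one place your write-up needs expanding is the claim that the $\dleq$-version is ``formally identical'': the literal transfer of your last step fails, because a union of $\dleq$-closed sets need not be $\dleq$-closed (with $k=2$ and edges $x\to r_1$, $x\to r_2$, the singletons $\{r_1\}$ and $\{r_2\}$ are successor-$d$-closed but their union is not, since $\Roots(x)=\{r_1,r_2\}$). What saves you is precisely the decomposition behind your parenthetical remark: by Lemma~\ref{lem:sdcl}, $D\dleq X$ holds iff $D\sleq X$ and $D$ is $d$-closed in $X$, and the latter depends only on the undirected reduct, which is common to $E^+$, $E^{++}$ and the restrictions to the $B_i$ (and likewise to $B^+$, $B^{++}$, $B^{\ast}$ in the condition (ii) step). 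So the $d$-closedness of $D$ in $E^+$ comes for free from $D\dleq E^{++}$, and only successor-closure needs your union argument. Spelling this reduction out would close the only real gap in the exposition; you should also record, via Lemma~\ref{lem:finesubstructures}, that $\ordpair{\E_0;\sleq}$ and $\ordpair{\E_F;\dleq}$ are strong classes in the first place.
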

\begin{proof}
We prove the statements about $\E_F$. The proofs for $\E_0$ are analogous.

By Lemma~\ref{lem:finesubstructures} we know that $\E_F$ is closed under taking $\dleq$-sub\-structures, so $\ordpair{\E_F; \dleq}$ is a strong class. We now verify the conditions (Definition~\ref{stex:def}) for being a strong expansion. Every structure in $\C_F$ has a fine orientation and Condition 
(i) follows by Lemma~\ref{lem:sdcl}, so it remains to verify Condition (ii).

Suppose $A \leq_d B \in \C_F$. Then 
by Theorem~\ref{23} there is an orientation $B^+ \in \D_F$ with $A \dleq B^+$. We may replace $B^+$ here by a fine refinement, and therefore we may assume that $B^+ \in \E_F$. As in the proof of Lemma~\ref{lem:finesubstructures} we can replace in $B^+$ the induced orientation on $A$ by any other fine orientation and the resulting structure is still in $\E^+$. This gives the Condition (ii).

It remains to verify that $\E_F$ is closed under free amalgamation.
Consider the free amalgamation $C$ of $B_1\in \E_F$ and $B_2\in \E_F$
over a common successor-$d$-closed substructure $A$. To show that $C\in \E_F$
it remains to verify that $C$ is $d$-fine. 

Assume, to the contrary, the
existence of a proper refinement $C'$  of $C$  and take $D \dleq C'$ with $D \not\dleq C$.
Put $D_1=D\cap  B_1$ and $D_2 = D \cap B_2$. Because the intersection of two
successor-$d$-closed substructures is successor-$d$-closed, we get that both
$D_1$ and $D_2$ are successor-$d$-closed in $C'$ and because $B_1$ and $B_2$ are fine,
they are also closed in $C$. Now every vertex $v$ in $\sdcl_C(D) \setminus D$
is connected by a directed path in $C$ to some roots of $C$  in $B_1 \setminus B_2$ and in  $B_2 \setminus B_1$. But this implies that $v\in B_1\cap B_2$. This contradicts $B_1$ and $B_2$ being fine.
\end{proof}

\subsection{Closure reducts}
\label{sec:G0sec}

We will determine some amenable and extremely amenable subgroups of $\Aut(M_0)$ and $\Aut(M_F)$; in some cases proving their maximality with respect to these properties. These will be associated with certain (fine) orientations of the structures $M_0$ and $M_F$. However, for the maximality, we will have to pass to  reducts of the oriented structures which remember only  the closures associated to the orientations. In order to do this, we  use partial functions and the notion of substructure introduced
in Section~\ref{rcsec}. This will enable us to apply directly the results of \cite{EHN}.

\begin{definition}
(1) Suppose $\ordpair{A; S} \in \D_0$. Denote by $A^\circ = \ordpair{A;\allowbreak R;\allowbreak (F_k)_{1 \leq k}}$
the following structure in the language consisting of one binary relation $R$ and partial
functions $F_k$ from vertices to sets of vertices of size $k$. The relation $R$ is the symmetrised (i.e. undirected)
 $S$ and for a vertex $a$ with closure $C = \scl_A(\{a\})$ we put
$F_{\vert C\vert }(a) = C$. The functions $F_k$ are undefined otherwise.

(2) Suppose $\ordpair{A; S} \in \D_F$. We  denote by $A^\bullet = \ordpair{A;\allowbreak R;\allowbreak (F_k)_{1 \leq k},\allowbreak (F_{k,n})_{1 \leq n, k}}$ the following structure in the language consisting of one binary relation $R$ and
partial functions $F_k$ from vertices to sets of vertices of size $k$ and partial functions $F_{k,n}$ from $n$-tuples of vertices to sets of vertices of size $k$. The relation $R$ is the symmetrised $S$ and for a vertex $a$ with closure $C = \scl_A(\{a\})$ we put $F_{\vert C\vert }(a) = C$.  For an $n$-tuple $\vec{v}$ of distinct root vertices in $A$ 
we put $F_{k,n} (\vec{v}) = U$ where $\vert U \vert = k$ and $U$ is the set of all vertices $u$ with the property that $\vec{v}$ consists precisely of the roots of $\scl_A(\{u\})$. The functions $F_k$ and $F_{k,n}$ are undefined otherwise.
\end{definition}
Note that we use successor-closure rather than successor-$d$-closure in the definition of $A^\bullet$. 
Recall that $\subseteq$ is inclusion and by $(\K;\subseteq)$ we denote strong classes where strong maps are all embeddings.
By the definition of successor-closed and successor-$d$-closed substructures (Definition~\ref{def:sdcl}) we obtain:
\begin{lemma}
\label{lem:fineamalg}
For all $B \subseteq A\in \D_0$, it holds that $B\sleq A$ if and only if the vertices of $B$ form a substructure of $A^\circ$.

Similarly for all $B\subseteq A\in \D_F$, it holds that $B\dleq A$ if and only if  $B$ is a substructure of $A^\bullet$.
\end{lemma}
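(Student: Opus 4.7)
The plan is to prove both statements by directly unpacking the definition of substructure in the language with partial functions that was recalled in Section~\ref{rcsec}, and checking that the closure conditions imposed by the extra partial functions of $A^\circ$ and $A^\bullet$ match $\sleq$ and $\dleq$ respectively. The lemma is essentially a translation between two formulations of the same closure concepts, so the proof will be short, and I would handle the two statements in parallel.

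For the first statement, I would note that the only partial function in $A^\circ$ is $F_k$, whose unique defined value on a vertex $a$ is $F_{|C|}(a) = C$ with $C = \scl_A(\{a\})$. The definition of substructure demands that whenever $F_k$ is defined on some $a \in B$ in $A^\circ$, it must be defined on $a$ in $B$ and take the same value. Since $F_k$ is effectively total on vertices (every vertex has a successor-closure), this collapses to requiring $\scl_A(\{a\}) \subseteq B$ for every $a \in B$, which is exactly $B \sleq A$. The converse is automatic: when $B \sleq A$, successor-closures in $A$ and in the induced sub-digraph on $B$ coincide for elements of $B$, so the induced substructure genuinely agrees with $A^\circ$ restricted to $B$.

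For the second statement, closure under the $F_k$ already forces $B \sleq A$ by the previous argument. What remains is to translate closure under $F_{k,n}$ into the $d$-closure condition. Given $B \sleq A$, the roots of $A$ lying in $B$ coincide with the roots of $B$, so $\Roots_A(B)$ from Definition~\ref{def:sdcl} is exactly the root set of $B$. The function $F_{k,n}^{A^\bullet}$ sends a tuple $\vec v$ of distinct roots of $A$ to the set of all $u \in A$ with $\Roots_A(\scl_A(\{u\})) = \vec v$. Closure under $F_{k,n}$ therefore asserts that whenever all roots of $\scl_A(\{u\})$ lie in $B$, we must have $u \in B$, which is precisely $\sdcl_A(B) \subseteq B$. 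By Lemma~\ref{lem:sdcl}, combined with $B \sleq A$, this is equivalent to $B \dleq A$. For the converse, when $B \dleq A$, the characterization in Lemma~\ref{lem:sdcl} as the smallest successor- and $d$-closed superset of $B$ ensures that both $F_k$ and $F_{k,n}$, applied to elements (resp.\ tuples) from $B$, return subsets of $B$, so $B$ is a substructure of $A^\bullet$.

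The main obstacle — really the only place where I would need to be careful — is the $F_{k,n}$ half of the second statement. I would need to make explicit that (i) $B \sleq A$ is what guarantees that the notion of ``root'' is intrinsic to $B$, so that the set $\Roots_A(B)$ really is the same thing one would compute inside $B$; and (ii) for each $u \in A$ the tuple $\Roots_A(\scl_A(\{u\}))$ is a legitimate input to $F_{k,n}^{A^\bullet}$ and contains $u$ in its image, so that closure under $F_{k,n}$ genuinely detects membership of $u$ in $B$. Once these points are set up, the substructure condition on $F_{k,n}$ maps verbatim onto the description of $\sdcl_A(B)$ in Definition~\ref{def:sdcl} and the conclusion follows from Lemma~\ref{lem:sdcl}.
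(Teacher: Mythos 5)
Your argument is correct and is exactly the intended one: the paper offers no written proof here, stating only that the lemma follows from Definition~\ref{def:sdcl} (together with Lemma~\ref{lem:sdcl}), and your unpacking of the substructure condition for the partial functions $F_k$ and $F_{k,n}$ — including the observation that $B \sleq A$ makes $\Roots_A(B)$ equal to the set of roots of $A$ lying in $B$, so that closure under $F_{k,n}$ is verbatim the condition $\sdcl_A(B)\subseteq B$ — is precisely the verification the authors leave to the reader. (The only unstated point is that every $u\in A\in\D_F$ has $\Roots_A(u)\neq\emptyset$, since $\delta(\scl_A(\{u\}))>0$ by the definition of $\C_F$, so the tuple you feed to $F_{k,n}$ really is nonempty.)
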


We will denote by $\G_0$ the class of all structures $A^\circ$ where $A \in \E_0$, and by
$\G_F$ the class of all structures $A^\bullet$, where  $A \in \E_F$.

Observe that there is important difference between $\G_0$ and $\G_F$. While
all functions in $\G_0$ are unary, the functions in $\G_F$ have arbitrary arities. 

Recall the notion of a free amalgamation class with respect to $\subseteq$ (all embeddings), introduced prior to Theorem~\ref{thm:Ramseyclosures}. We have:

\begin{lemma}
\label{lem:Gamalg}
The class $\ordpair{\G_0;\subseteq}$ is a strong expansion of $\ordpair{\C_0;\leq_s}$.

Similarly $\ordpair{\G_F;\subseteq}$ is a strong expansion of $\ordpair{\C_F;\leq_d}$. Both classes are free amalgamation classes.
\end{lemma}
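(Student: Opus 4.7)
The plan is to combine Lemma~\ref{lem:fineamalg} and Lemma~\ref{lem:fineamalgamation}. Lemma~\ref{lem:fineamalg} says that embeddings $A^\circ \hookrightarrow B^\circ$ in $\G_0$ are precisely $\sleq$-embeddings $A \hookrightarrow B$ in $\E_0$ (and analogously for $\G_F$ via $\dleq$ in $\E_F$). So the strong class $(\G_0;\subseteq)$ is essentially $(\E_0;\sleq)$ repackaged in a language in which the closure is recorded by unary partial functions. Given this translation, conditions (i) and (ii) of Definition~\ref{stex:def} transfer directly: condition (i) (a $\subseteq$-embedding in $\G_0$ projects to a $\leq_s$-embedding on undirected reducts) and condition (ii) (given $g:A\to B$ strong in $(\C_0;\leq_s)$ and a fine orientation $A^\circ$, one can fine-orient $B$ so that $g$ becomes $\sleq$-strong) are the content of Lemma~\ref{lem:fineamalgamation}. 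The identical argument, with $\dleq$ replacing $\sleq$ and $\E_F$ replacing $\E_0$, handles $(\G_F;\subseteq)$.

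For the free amalgamation property in $(\G_0;\subseteq)$, suppose $A^\circ \subseteq B_1^\circ, B_2^\circ$ in $\G_0$. By Lemma~\ref{lem:fineamalg} we have $A \sleq B_1, B_2$ in $\E_0$, and Lemma~\ref{lem:fineamalgamation} supplies a free amalgam $C\in\E_0$ of $B_1$ and $B_2$ over $A$ with $B_i \sleq C$. I would then verify that $C^\circ$ is the free amalgam of $B_1^\circ$ and $B_2^\circ$ over $A^\circ$ in the partial-function language. The relation-part of the free amalgamation condition is inherited from the amalgamation in $\E_0$, since symmetrising adds no edges between $B_1\setminus A$ and $B_2\setminus A$. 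For the functions, for $v \in B_i$ we have $F_k^{C^\circ}(v) = \scl_C(v) = \scl_{B_i}(v) = F_k^{B_i^\circ}(v)$ (using $B_i \sleq C$), so $B_i^\circ$ is a substructure of $C^\circ$; and since all $F_k$ are unary, the ``no tuple in $\dom(F^C)$ uses vertices from both sides'' condition is automatic.

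For $(\G_F;\subseteq)$ the same scheme works, but now there are also multi-ary functions $F_{k,n}$ to accommodate, and this is the main point requiring care. Using the free amalgam $C \in \E_F$ of $B_1, B_2$ over $A$ (with $B_i \dleq C$), I would argue as follows: if $F_{k,n}^{C^\bullet}$ is defined on an $n$-tuple $\vec v$ of distinct roots of $C$, then by definition $\vec v$ is the root-tuple of $\scl_C(u)$ for some $u\in C$. If $u\in B_i$, then $B_i \dleq C$ (hence successor-closed in $C$) forces $\scl_C(u) \subseteq B_i$, so $\vec v \subseteq B_i$. This both rules out mixed tuples in $\dom(F_{k,n}^{C^\bullet})$ (giving the free-amalgamation condition on function domains) and simultaneously shows $F_{k,n}^{C^\bullet}(\vec v) = F_{k,n}^{B_i^\bullet}(\vec v)$ for $\vec v \subseteq B_i$, so that each $B_i^\bullet$ is a substructure of $C^\bullet$.

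The only mildly delicate step is the last one — checking that the multi-ary functions $F_{k,n}$ in $\G_F$ do not have tuples in their domain crossing the two sides of the amalgam, and that the values assigned on each side in $C^\bullet$ agree with those in $B_i^\bullet$. Both points reduce to the single observation that $B_i \dleq C$ keeps successor-closures (and hence root-tuples) inside $B_i$. Once that is in hand, everything else is a direct translation through Lemma~\ref{lem:fineamalg} of facts already established in Lemma~\ref{lem:fineamalgamation}.
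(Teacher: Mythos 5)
Your proof follows the same route as the paper's, which simply cites Lemmas~\ref{lem:fineamalgamation} and \ref{lem:fineamalg} and leaves to the reader the check that $C^\bullet$ is the free amalgam of $B_1^\bullet$ and $B_2^\bullet$ over $A^\bullet$; your write-up supplies those details correctly. One micro-point: to conclude $F_{k,n}^{C^\bullet}(\vec v)=F_{k,n}^{B_i^\bullet}(\vec v)$ you also need that no vertex $u$ on the \emph{other} side of the amalgam has root-set equal to $\vec v$ (such a $\vec v$ would lie in $A$, forcing $u\in\sdcl_{B_{3-i}}(A)=A$ by $A\dleq B_{3-i}$), which does not quite follow from the single observation that $B_i\dleq C$ keeps root-tuples inside $B_i$.
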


\begin{proof}
This follows by Lemmas~\ref{lem:fineamalgamation}  and \ref{lem:fineamalg}. For the statement about free amalgamation, one checks, in the case of $\ordpair{\G_F;\subseteq}$, that if $A \dleq B_1, B_2 \in \E_F$ and $C$ is the free amalgam of $B_1$ and $B_2$ over $A$ (in $\E_F$), then $C^\bullet$ is the free amalgam of $B_1^\bullet$ and $B_2^\bullet$ over $A^\bullet$ (in the sense of Theorem~\ref{thm:Ramseyclosures}). The proof for $\ordpair{\G_0;\subseteq}$ is similar, but more straightforward.
\end{proof}

\subsection{Extremely amenable subgroups}
\label{sec:DFramsey}
\begin{theorem}
\label{thm:expansion}
\label{thm:Fexpansion}
\label{thm:GRamsey}
The class $\ordpair{\G_0^\prec;\subseteq}$ of linear orderings of structures in $\ordpair{\G_0;\subseteq}$
is Ramsey and moreover it has a subclass $\ordpair{\mathcal H_0;\subseteq}$ which is a Ramsey expansion of $\ordpair{\G_0;\subseteq}$ having the expansion property (with respect to $\ordpair{\G_0;\subseteq}$).

The class $\ordpair{\G_F^\prec;\subseteq}$ of linear orderings of structures in $\ordpair{\G_F;\subseteq}$
is Ramsey and moreover it has a subclass $\ordpair{\mathcal H_F;\subseteq}$ which is a Ramsey expansion of $\ordpair{\G_F;\subseteq}$ having the expansion property (with respect to $\ordpair{\G_F;\subseteq}$).
\end{theorem}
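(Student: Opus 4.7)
The plan is to derive this as a direct application of Theorem~\ref{thm:Ramseyclosures}, which asserts that for any language $L$ of relational symbols and partial functions, every free amalgamation class $(\K, \subseteq)$ satisfies: $(\K^\prec, \subseteq)$ is Ramsey, and there exists a Ramsey subclass $\mathcal O \subseteq \K^\prec$ which is a strong expansion of $(\K, \subseteq)$ with the Expansion Property. Essentially all of the structural preparation needed to invoke this theorem has already been carried out in the preceding subsections.

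Concretely, I would first recall Lemma~\ref{lem:Gamalg}, which establishes that both $(\G_0; \subseteq)$ and $(\G_F; \subseteq)$ are free amalgamation classes. The language of $\G_0$ consists of one binary relation together with the unary partial functions $F_k$, while the language of $\G_F$ additionally contains the higher-arity partial functions $F_{k,n}$ encoding root tuples. Both languages fall within the scope of Theorem~\ref{thm:Ramseyclosures}, so I apply the theorem in each case to obtain the Ramseyness of $(\G_0^\prec, \subseteq)$ and $(\G_F^\prec, \subseteq)$, along with Ramsey subclasses $\mathcal H_0 \subseteq \G_0^\prec$ and $\mathcal H_F \subseteq \G_F^\prec$ that are strong expansions with the Expansion Property. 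This yields both halves of the theorem simultaneously.

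The one point I would check carefully is that the notion of ``free amalgamation'' used in Lemma~\ref{lem:Gamalg} coincides with the combinatorial definition preceding Theorem~\ref{thm:Ramseyclosures}: namely, no tuple in any relation $R^C$ or function domain $\dom(F^C)$ of the amalgam may involve vertices from both factors. For $\G_0$ this is immediate, since all functions are unary and so the value $F_k(v)$ lies inside whichever factor contains $v$. For $\G_F$ one verifies the same for the higher-arity functions $F_{k,n}$: in the free amalgam $C$ of fine orientations $B_1, B_2 \in \E_F$ over a common $A \dleq B_i$ (obtained in Lemma~\ref{lem:fineamalgamation}), the successor-closure of any $u \in B_i \setminus A$ remains inside $B_i$, so the root tuple indexing a value of $F_{k,n}$ is confined to one factor as well. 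I anticipate no genuine obstacle here: the conceptual heart of the argument -- the behaviour of fine orientations under free amalgamation (Lemma~\ref{lem:fineamalgamation}) and the translation into the closure-reduct language (Lemma~\ref{lem:Gamalg}) -- is already in place, and the remaining verification is straightforward bookkeeping.
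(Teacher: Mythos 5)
Your proposal is correct and matches the paper's own proof, which likewise derives the theorem as a direct consequence of Lemma~\ref{lem:Gamalg} and Theorem~\ref{thm:Ramseyclosures}. The compatibility check you flag (that free amalgamation in the closure-reduct language agrees with the notion preceding Theorem~\ref{thm:Ramseyclosures}) is exactly the verification the paper folds into the proof of Lemma~\ref{lem:Gamalg}, so nothing is missing.
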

\begin{proof}
This is a direct consequence of Lemma~\ref{lem:Gamalg} and Theorem~\ref{thm:Ramseyclosures}.
\end{proof}

\begin{remark}In \cite{EHN}, subclasses $\ordpair{\mathcal H_0;\subseteq}$ and $\ordpair{\mathcal H_F;\subseteq}$
are explicitly described by means of special, admissible, orderings. The 
Ramsey expansion of $\ordpair{\G_F ; \subseteq}$ is more involved, due to the presence of non-unary functions in the structures, and needs the full power
of \cite{HN}.
\end{remark}
With the notation as in Theorem~\ref{thm:GRamsey}, we  will denote by  $P_0$ the \Fraisse{} limit of $\ordpair{\mathcal H_0 ; \subseteq}$ and $P_F$ the \Fraisse{} limit of $\ordpair{\mathcal H_F ; \subseteq}$. We also denote by  $O_0$ is the \Fraisse{} limit of $\ordpair{\G_0; \subseteq}$ and by $O_F$ is the \Fraisse{} limit of $\ordpair{\G_F; \subseteq}$. As usual, we can regard $P_0$ as an expansion of $O_0$ and $P_F$ as an expansion of $O_F$. Note that these are both precompact expansions. Furthermore, if we let $E_0$ and $E_F$ be the \Fraisse{} limits of $\E_0$ and $\E_F$, then we may regard $O_0$ and $O_F$ as the closure-reducts $E_0^\circ$ and $E_F^\bullet$ of $E_0$ and $E_F$. 
\begin{theorem} 
\label{59}
\label{69}
The subgroup $\Aut(P_0)$ is maximal among extremely amenable subgroups of $\Aut(M_0)$. Similarly $\Aut(P_F)$ is maximal among extremely amenable subgroups of $\Aut(M_F)$.
\end{theorem}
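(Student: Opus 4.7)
The theorem combines extreme amenability and maximality; I describe the plan for the pair $\ordpair{\Aut(P_0),\Aut(M_0)}$, since the argument for $\ordpair{\Aut(P_F),\Aut(M_F)}$ is obtained by the same reasoning after substituting the ``$F$''-subscripted classes throughout. For extreme amenability: by Theorem~\ref{thm:GRamsey} the class $\ordpair{\mathcal{H}_0;\subseteq}$ is Ramsey, and all its members are rigid because they carry a linear order, so the KPT correspondence (Theorem~\ref{thm:KPT}) yields that $\Aut(P_0)$ is extremely amenable.

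For maximality I work within the chain
\[ \Aut(P_0)\leq \Aut(O_0)\leq \Aut(M_0).\]
The first inclusion arises since $P_0$ is a linear-order expansion of $O_0$, and the second since $\ordpair{\mathcal{G}_0;\subseteq}$ is a strong expansion of $\ordpair{\C_0;\leq_s}$ (Lemma~\ref{lem:Gamalg}). The expansion $P_0\to O_0$ is precompact, so $\Aut(P_0)$ is a closed, extremely amenable, co-precompact subgroup of $\Aut(O_0)$, and by Theorem~\ref{thm:GRamsey} it satisfies the Expansion Property relative to $\ordpair{\mathcal{G}_0;\subseteq}$. Theorem~\ref{X27} then applies with $G=\Aut(O_0)$ and $H_1=\Aut(P_0)$, yielding that $\Aut(P_0)$ is maximal among extremely amenable subgroups of $\Aut(O_0)$.

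To promote this maximality to $\Aut(M_0)$, let $\Aut(P_0)\leq H\leq \Aut(M_0)$ be closed and extremely amenable; the aim is to show $H\leq \Aut(O_0)$, for then the previous paragraph forces $H=\Aut(P_0)$. Consider the $\Aut(M_0)$-flow $X(\mathcal{G}_0)$ of $\mathcal{G}_0$-expansions of $M_0$: $\mathcal{G}_0$ is a reasonable class of expansions of $\ordpair{\C_0;\leq_s}$ because every finite 2-sparse graph admits only finitely many fine 2-orientations, and thus only finitely many $\mathcal{G}_0$-expansions. Hence $X(\mathcal{G}_0)$ is a non-empty compact $\Aut(M_0)$-flow (Theorem~\ref{XDthm}). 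Extreme amenability of $H$ produces a fixed point $\tilde O\in X(\mathcal{G}_0)$, i.e., $H\leq \Aut_{M_0}(\tilde O)$.

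The remaining, and main, obstacle is to show that $\tilde O$ can be taken to be $O_0$ itself. Since $\Aut(P_0)\leq H$ fixes $\tilde O$, and $\Aut(P_0)$ also fixes $O_0$ by construction, both $\tilde O$ and $O_0$ belong to the $\Aut(P_0)$-fixed-point set inside $X(\mathcal{G}_0)$. The plan is to exploit the $\leq$-homogeneity of $P_0$ together with the fact that the closure function of $O_0$ already sits in the language of $P_0$, arguing that the only $\Aut(P_0)$-invariant fine orientation of $M_0$ is the canonical one coming from $E_0$; one then obtains $\tilde O = O_0$, whence $H\leq \Aut(O_0)$, and the earlier step concludes $H=\Aut(P_0)$.
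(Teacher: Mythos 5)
Your overall architecture agrees with the paper's: obtain extreme amenability of $\Aut(P_0)$ from Theorems~\ref{thm:GRamsey} and~\ref{thm:KPT}; use extreme amenability of a candidate $H\geq \Aut(P_0)$ to fix a point in a flow of orientation/closure expansions of $M_0$; argue that the fixed expansion induces the same closure structure as $E_0$, so that $H\leq \Aut(O_0)$; and finish inside $\Aut(O_0)$ using precompactness and the Expansion Property (your appeal to Theorem~\ref{X27} in place of the paper's appeal to Theorem~\ref{thm:comeagre} is an inessential variation). The problem is that the step you yourself call ``the remaining, and main, obstacle'' --- showing that the $H$-fixed point $\tilde O\in X(\G_0)$ is $O_0$ itself --- is exactly where all the content of the maximality proof lies, and you only announce a plan for it (``exploit the $\leq$-homogeneity of $P_0$ \dots\ arguing that the only $\Aut(P_0)$-invariant fine orientation of $M_0$ is the canonical one'') without supplying an argument. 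As written, this is a genuine gap, and the intermediate claim as you phrase it (uniqueness of the invariant \emph{orientation}) is stronger than what is needed or established: only the induced closure structure gets pinned down, which is precisely why the statement is about the closure-reduct $O_0$ rather than about $E_0$ or $N_0$.

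The paper closes this gap as follows (working with the flow $X(\D_0)$ of all $2$-orientations, which also avoids the technical point that Definition~\ref{def:reasonable} asks for $L^+\setminus L$ to be relational, whereas the language of $\G_0$ contains function symbols). Let $N$ be an orientation of $M_0$ fixed by $H\supseteq \Aut(P_0)$, and let $A$ be a finite closed subset of $P_0$, equivalently $A\sleq E_0$. For any $b\notin A$, the $\leq$-homogeneity of $P_0$ shows that the orbit of $b$ under the pointwise stabiliser of $A$ in $\Aut(P_0)$ is infinite; since $\scl_N(A)$ is finite and invariant under that stabiliser (because the stabiliser fixes $N$), $b$ cannot lie in $\scl_N(A)$. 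Hence every closed subset of $E_0$ is closed in $N$, i.e.\ $N$ is a refinement of $E_0$, and fineness of $E_0$ forces the closed sets of $N$ and $E_0$ to coincide. This gives $H\leq \Aut(O_0)$, after which your final step applies. If you supply this orbit-versus-finite-closure argument (and its $\dleq$ analogue for $M_F$), your proof becomes essentially the paper's.
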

\begin{proof}
Again we give the proof for $M_F$ while the statement for $M_0$ follows in complete analogy. Extreme amenability of $\Aut(P_F)$ follows by Theorems~\ref{thm:GRamsey} and~\ref{thm:KPT}.

Suppose $\Aut(P_F ) \leq K \leq \Aut(M_F )$ and $K$ is extremely amenable.
We show that $K = \Aut(P_F )$.

Consider $K$ acting on $X(\D_F)$, the space of 2-orientations of $M_F$. As
$K$ is extremely amenable, it preserves some element of this. Thus $K$ is
acting as a group of automorphisms of some 2-orientation $N$ of $M_F$. Consider any finite closed  $A \subseteq P_F$ (or equivalently, $A \dleq E_F$). If $b \notin
  A$ then by the homogeneity of $P_F$, the orbit of $b$ under the pointwise stabiliser
of $A$ in $\Aut(P_F)$ is infinite. So $b$ is not in the $\dleq$-closure of $A$ in $N^\bullet$. It
follows that $A\dleq N$. As $E_F$ is a fine orientation of $M_F$, it follows that $\dleq$-closure is the same in $E_F$ and in $N$. So in particular, $K \leq \Aut(O_F)$.

Thus both $K$ and $\Aut(P_F)$ are co-precompact, extremely amenable subgroups of
$\Aut(O_F)$. By Theorem~\ref{thm:GRamsey} and Theorem~\ref{thm:comeagre} it follows that $K \leq \Aut(P_F)$.
\end{proof}

\subsection{Amenable subgroups}

The following is a simple generalisation of Proposition 9.3 of \cite{AK}.

\begin{theorem} \label{thm:easyamen} Suppose $\ordpair{\K; \subseteq}$ is a free amalgamation class of structures in a language with relations and partial functions. Let $O$ be the \Fraisse{} limit of $\ordpair{\K; \subseteq}$. Then $G = \Aut(O)$ is amenable.
\end{theorem}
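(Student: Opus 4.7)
The plan is to adapt Angel–Kechris's proof (Proposition 9.3 of \cite{AK}) for free amalgamation classes of relational structures to the partial–functions setting. The strategy is to identify the universal minimal flow of $G$ explicitly and then produce a $G$-invariant Borel probability measure on it by a random linear ordering of $O$.

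First I would apply Theorem~\ref{thm:Ramseyclosures} to obtain a Ramsey expansion $\ordpair{\mathcal{O}; \subseteq}$ of $\ordpair{\K; \subseteq}$ (by adjoining linear orderings) with the Expansion Property, and let $N$ denote its \Fraisse{} limit. Then $\Aut(N)$ is extremely amenable by Theorem~\ref{thm:KPT}, and it is co-precompact in $G$ because each finite structure in $\K$ admits only finitely many linear orderings. Theorem~\ref{thm:comeagre} then identifies the universal minimal flow $M(G)$ with the space $X(\mathcal{O})$ of admissible ordered expansions of $O$. Amenability of $G$ thus reduces to constructing a $G$-invariant Borel probability measure on $X(\mathcal{O})$.

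Next I would construct such a measure by the standard random ordering. Enumerate the vertices of $O$ as $v_1, v_2, \ldots$ and let $(U_i)_{i \in \mathbb{N}}$ be i.i.d.\ uniform $[0,1]$ random variables; define the random linear order $\prec$ on $O$ by $v_i \prec v_j$ iff $U_i < U_j$. Its distribution, viewed as a Borel probability measure on the space of all linear orderings of $O$, is manifestly $G$-invariant because $G$ merely relabels the indices of the i.i.d.\ variables. The key claim is that $\prec \in X(\mathcal{O})$ almost surely: for a free amalgamation class, the Ramsey expansion of Theorem~\ref{thm:Ramseyclosures} may be chosen so that every linear ordering of any $A \in \K$ is admissible (equivalently, $\mathcal{O} = \K^\prec$). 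This is because free amalgamation allows an ordered copy of any $A \in \K$ to be inserted into a sufficiently large ordered superstructure regardless of how its vertices are interleaved with those of the ambient structure, so the Expansion Property holds for the class of \emph{all} ordered expansions. Once this is granted, the random order tautologically lies in $X(\mathcal{O})$ and its distribution is the desired invariant measure; amenability of $G$ then follows by the standard pushforward argument, since every $G$-flow receives a continuous $G$-equivariant map from one of its minimal subflows and hence, ultimately, from $M(G)$.

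The main obstacle will be the verification that $\mathcal{O}$ may be taken to equal $\K^\prec$ when $\K$ is a free amalgamation class in a language with partial functions. In the purely relational case (treated in \cite{AK}) this is a routine consequence of free amalgamation plus Ne\v set\v ril–R\"odl, but here one must check that the presence of partial functions — which can be of arbitrary arity and which participate nontrivially in the notion of substructure — does not force the admissible orderings to satisfy a non-vacuous compatibility condition (such as placing arguments of a function before its values). The point is that partial functions are preserved under, and do not interact across, free amalgamation, so the ordered free amalgamation argument of the relational case goes through verbatim, yielding the Expansion Property for all of $\K^\prec$.
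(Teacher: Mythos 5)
Your overall strategy (an extremely amenable, co-precompact subgroup built from linear orderings, plus the random-ordering measure) is the right one, but the step you yourself flag as ``the main obstacle'' is in fact false, and the argument breaks there. The claim that the class $\mathcal O$ of Theorem~\ref{thm:Ramseyclosures} can be taken to be all of $\K^\prec$ is exactly the assertion that $\K^\prec$ has the Expansion Property over $\ordpair{\K;\subseteq}$, equivalently (Theorem~\ref{EPthm}) that the space of \emph{all} linear orderings of $O$ is a minimal $\Aut(O)$-flow. This fails as soon as the partial functions are non-trivial. Concretely, in $\G_0$ (with $k=2$) let $A$ be the two-vertex structure coming from a single closed directed edge $a\to b$, so that $F_2(a)=\{a,b\}$ and $F_1(b)=\{b\}$; any copy of $A$ in $B\in\G_0$ is a pair $\{x,y\}$ with $F_2(x)=\{x,y\}$ and $F_1(y)=\{y\}$. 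Order an arbitrary $B$ so that every vertex $v$ with $F_1(v)$ defined precedes every other vertex: then in every copy of $A$ in $B$ the image of $b$ precedes the image of $a$, so the ordered expansion of $A$ with $a\prec b$ embeds in no such $B^\prec$. Hence $\G_0^\prec$ does not have EP and the random ordering does not land in $X(\mathcal O)$ almost surely; indeed the paper's remark after Theorem~\ref{thm:GRamsey} stresses that $\mathcal H_0,\mathcal H_F$ consist of \emph{special}, admissible orderings, i.e.\ proper subclasses. Your observation that partial functions do not interact across free amalgamation is true but beside the point: the obstruction is not to amalgamation but to interleaving an ordered copy of $A$ with an adversarial ordering of the ambient structure, and closures cannot be re-routed the way relational edges can.

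The paper's proof avoids this entirely by never invoking minimality or the Expansion Property. It takes $H=\Aut(P)$ where $P$ is the \Fraisse{} limit of the \emph{full} class $\K^\prec$, which is Ramsey by Theorem~\ref{thm:Ramseyclosures}; so $H$ is extremely amenable and co-precompact, and $\widehat{G/H}$ is the space of all linear orderings of $O$, which visibly carries the i.i.d.-uniform invariant measure. One then uses only that $\widehat{G/H}$ admits a continuous $G$-map into \emph{every} $G$-flow $Y$: extreme amenability of $H$ gives an $H$-fixed point $y_0\in Y$, and $gH\mapsto gy_0$ extends to the completion since $G/H$ is dense. Pushing the measure forward gives an invariant measure on $Y$, hence amenability. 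If you want to salvage your route through $M(G)=X(\mathcal O)$, you must push the random-ordering measure forward along the factor map $\widehat{G/H}\to M(G)$ --- which is the paper's argument in disguise --- rather than claim the measure is already supported on $X(\mathcal O)$.
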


\begin{proof} By Theorem~\ref{thm:Ramseyclosures}, the class $\ordpair{\K^\prec; \subseteq}$ of all linear orderings is a Ramsey class. Let $P$ be its \Fraisse{} limit. Then $H = \Aut(P)$ is a co-precompact extremely amenable subgroup of $G$ and the completion $\widehat{G/H}$ is isomorphic (as a $G$-flow) to the space of linear orderings on $O$. This has a $G$-invariant probability measure. Moreover, as $H$ is extremely amenable and the $G$-orbit $G/H$ is dense in $\widehat{G/H}$, there is a continuous $G$-map from $\widehat{G/H}$ to any other $G$-flow.
\end{proof}

Recall the following notation:
\begin{enumerate}
\item $N_0$ is the \Fraisse{} limit of $\ordpair{\D_0; \sleq}$;
\item $E_0$ is the \Fraisse{} limit of the fine orientations $\ordpair{\E_0; \sleq}$;
\item $O_0$ is the \Fraisse{} limit of the closure-reducts $\ordpair{\G_0; \subseteq}$;
\item $N_F$ is the \Fraisse{} limit of $\ordpair{\D_F; \dleq}$;
\item $E_F$ is the \Fraisse{} limit of the $d$-fine orientations $\ordpair{\E_F;\dleq}$;
\item $O_F$ is the \Fraisse{} limit of their closure reducts $\ordpair{\G_F; \subseteq}$.
\end{enumerate}

Each of these, and their closure reducts may be regarded as the \Fraisse{} limit of a free amalgamation class $\ordpair{\K; \subseteq}$ in a language with relations and partial functions. Thus by Theorem~\ref{thm:easyamen} we have:

\begin{theorem} \label{611} The groups $\Aut(N_0)$, $\Aut(N_0^\circ)$, $\Aut(E_0)$, $\Aut(O_0)$, $\Aut(N_F)$, $\Aut(N_F^\bullet)$, $\Aut(E_F)$, $\Aut(O_F)$ are amenable.
\end{theorem}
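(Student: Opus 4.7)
The strategy is to identify each of the eight Polish groups listed as the automorphism group of the \Fraisse{} limit of a free amalgamation class in a language of relations and partial functions, and then invoke Theorem~\ref{thm:easyamen} to conclude amenability. The proof naturally splits into three cases.

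For $O_0$ and $O_F$ no further work is required: Lemma~\ref{lem:Gamalg} already states that $\ordpair{\G_0;\subseteq}$ and $\ordpair{\G_F;\subseteq}$ are free amalgamation classes in such a language, and $O_0, O_F$ are by definition their \Fraisse{} limits. For $N_0, N_F, E_0, E_F$, the plan is to enrich the language by adjoining, for each finite tuple $\bar s$ of a structure $A$ from the relevant class, a partial function with $F_{|\bar s|,|\cl(\bar s)|}(\bar s) = \cl(\bar s)$, using $\scl$ for the $\sleq$-classes and $\sdcl$ for the $\dleq$-classes, following the recipe described before Theorem~\ref{thm:Ramseyclosures}. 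In the enriched language, substructures correspond precisely to strongly-closed subsets (Lemma~\ref{lem:fineamalg} together with its analogue for $\D_0,\D_F$), so the strong classes $\ordpair{\D_0;\sleq}$, $\ordpair{\D_F;\dleq}$, $\ordpair{\E_0;\sleq}$ and $\ordpair{\E_F;\dleq}$ become ordinary amalgamation classes $\ordpair{\K;\subseteq}$. Free amalgamation in the new sense is then inherited from the fact that these four strong classes are free amalgamation classes (Lemma~\ref{lem:fineamalgamation} for the fine versions, and earlier sections for the full ones). Since the added functions are automorphism-invariantly definable in the original structure, passing to the enriched language does not alter the automorphism group of the \Fraisse{} limit.

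The third case concerns the closure reducts $N_0^\circ$ and $N_F^\bullet$, and is where the main work will lie. My plan is to show directly that the classes $\{A^\circ : A \in \D_0\}$ and $\{A^\bullet : A \in \D_F\}$, both under ordinary $\subseteq$, are free amalgamation classes having $N_0^\circ$ and $N_F^\bullet$ respectively as their \Fraisse{} limits. By Lemma~\ref{lem:fineamalg} the substructures in these languages correspond to successor-closed (resp.\ successor-$d$-closed) subsets of the underlying $2$-orientations, which handles the age. The key step is verifying free amalgamation, and the main obstacle is that the closure reduct does not uniquely determine the orientation on a given graph: the orientations on a common substructure $A$ inherited from $A_1$ and from $A_2$ may disagree as oriented graphs while inducing the same functions. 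To amalgamate $A_1^\circ, A_2^\circ$ over $A^\circ$, I would pick any orientations of $A_1, A_2$ realising these reducts and construct a $2$-orientation $C$ of the graph free-amalgam by using $A_1$'s orientation on $A_1$ (including on $A$) and $A_2$'s orientation on the edges of $A_2\setminus A$. The condition $A\sleq A_i$ forces every out-edge of a vertex in $A$ to remain inside $A$, so the out-degree bound of $2$ is preserved; and the agreement of the functions $F_k$ forces the singleton successor-closures on $A$ computed from $A_1$'s and $A_2$'s orientations to coincide, hence (using $\scl(S)=\bigcup_{s\in S}\scl(\{s\})$) the closure operators on $A$ agree, so $C^\circ$ realises the expected free amalgam in the reduct language. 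For $\D_F^\bullet$ the analogous argument is needed with $\sdcl$ in place of $\scl$; here the extra functions $F_{k,n}$ force the root sets of vertices in $A$ to agree across $A_1$'s and $A_2$'s orientations, which is exactly what is needed to check that, for $u \in A_2\setminus A$, the roots of $\scl_C(\{u\})$ coincide with $\Roots_{A_2}(u)$.
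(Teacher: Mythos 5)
Your overall strategy is exactly the paper's: each of the eight groups is exhibited as the automorphism group of the \Fraisse{} limit of a free amalgamation class $\ordpair{\K;\subseteq}$ in a language with relations and partial functions, and amenability then follows from Theorem~\ref{thm:easyamen}. Your treatment of $O_0$, $O_F$ and of the closure reducts $N_0^\circ$, $N_F^\bullet$ is sound (and your direct verification of free amalgamation for the latter, which confronts the fact that the closure reduct does not determine the orientation, is more explicit than anything the paper writes down).

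There is, however, one step in your middle case that fails as written. For $N_0,N_F,E_0,E_F$ you adjoin a closure function $F_{|\bar s|,|\cl(\bar s)|}$ for \emph{every} finite tuple $\bar s$ and then assert that free amalgamation ``is inherited'' from the fact that the underlying strong classes are free amalgamation classes. This is precisely the inference the paper warns against immediately after describing that recipe: with functions defined on all tuples, the free amalgam of two encoded structures over a common substructure must, by the definition of free amalgamation for functional languages, leave $F$ undefined on every tuple meeting both $B_1\setminus A$ and $B_2\setminus A$, and hence is not of the encoded form, i.e.\ is not a member of the class. So the class produced by the full recipe is \emph{not} a free amalgamation class and Theorem~\ref{thm:easyamen} does not apply to it directly. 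The repair is the one you already use in your third case: prune the functions. For $\ordpair{\D_0;\sleq}$ and $\ordpair{\E_0;\sleq}$ keep only the unary functions $a\mapsto\scl(\{a\})$; this suffices because $\scl(S)=\bigcup_{s\in S}\scl(\{s\})$, so substructures are still exactly the successor-closed subsets, and unary domains can never straddle the two sides of a free amalgam. For $\ordpair{\D_F;\dleq}$ and $\ordpair{\E_F;\dleq}$ keep the unary functions together with the root-tuple functions $F_{k,n}$ of the $\bullet$-construction: here $\sdcl$ is not determined by its values on singletons, which is why non-unary functions are needed, but their domains consist of tuples of roots of a single successor-closure and therefore also never straddle the two sides. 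In other words, the correct encodings of $N_0,E_0,N_F,E_F$ are $N_0^\circ, E_0^\circ, N_F^\bullet, E_F^\bullet$ with the edge-orientation retained as an additional binary relation; with that change your argument goes through and coincides with the paper's.
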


It would be interesting to know whether the subgroup $\Aut(O_0)$ is maximal amenable subgroup of $\Aut(M_0)$ and whether $\Aut(O_F)$ is maximal amenable subgroup of $\Aut(M_F)$.

\section{Concluding remarks}

It would of course be interesting to find other types of counterexamples for Question~\ref{qu}. 

As already mentioned, the following question, raised by a number of authors, remains open:

\begin{question} \rm Suppose $M$ is a countable structure which is homogeneous in a finite relational language. Does there exist an $\omega$-categorical expansion $N$ of $M$ whose automorphism group is extremely amenable? If so, can we take $N$ homogeneous in a finite relational language?
\end{question}

 In view of Theorem~\ref{sparsethm}, we ask the following:
 
 \begin{question} \rm Does there exist a countable structure $M$ which is homogeneous in a finite relational language and in which a sparse graph of infinite valency can be interpreted?
 \end{question}

By Theorem~\ref{sparsethm}, such an $M$ would give a negative answer to the first question. 

It is worth remarking that an $\omega$-categorical  sparse graph of infinite valency interprets a \emph{pseudoplane}: a bipartite graph where all vertices are of infinite valency and in which any pair of vertices have only finitely many common neighbours. The Hrushovski construction in Section~\ref{hcsec} is the only way of producing $\omega$-categorical pseudoplanes currently known. Moreover, it is an open question whether there is a pseudoplane which is homogeneous in a finite relational language (see \cite{ThomasPP}).

Note that in our examples of groups $G$ for Question~\ref{qu}, the obstacle to having a precompact extremely amenable subgroup, that is, the $G$-flow of orientations,  is also the obstacle to $G$ being amenable. So the following version of Question~\ref{qu} considered by Ivanov in \cite{Ivanov} is particularly interesting:

\begin{question} Suppose $M$ is a countable $\omega$-categorical structure with amenable automorphism group. Does there exist an $\omega$-categorical expansion of $M$ whose automorphism group is extremely amenable?
\end{question}

The groups $\Aut(M_0)$ and $\Aut(M_F)$ have metrizable minimal flows all of whose orbits are meagre (Theorem \ref{meagreorbits}). The following has been raised by T. Tsankov (personal communication):

\begin{question} Does either of these groups have a non-trivial, (metrizable) minimal flow with a comeagre orbit?
\end{question}

We note that the paper \cite{OlaK} by A. Kwiatkowska has an example of a countable structure which is not $\omega$-categorical, but whose automorphism group $G$ is Roelcke precompact and is such that $M(G)$ is non-metrizable and has a comeagre orbit.

In Theorem~\ref{thm:easyamen} we gave an easy proof of amenability (modulo a hard Ramsey result) which did not use EPPA. However, it would still be interesting to have the EPPA results as they imply other properties of automorphism groups, beyond amenability. 
By Theorem~1.7 of \cite{EHN} it follows that class $\G_0$ has EPPA.
Because the language of $\G_F$ contains non-unary functions, we cannot establish the expansion property for partial automorphisms by application of \cite{EHN}. However, it  seems reasonable to conjecture it. Moreover, we also believe that some of the amenable subgroups are maximal amenable. In particular, we propose:
\begin{conjecture}\rm
The class $\ordpair{\G_F ; \subseteq}$ has EPPA. Moreover, if $\Aut(O_F) \leq H \leq \Aut(M_F)$ and $H$ is amenable, then $H \leq \Aut(O_F)$.
\end{conjecture}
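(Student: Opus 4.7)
The conjecture has two halves, and I would attack them separately before combining them.

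For EPPA for $\ordpair{\G_F;\subseteq}$, the natural strategy is to adapt the proof of Theorem~1.7 of~\cite{EHN} (which gives EPPA for $\G_0$) while dealing with the extra difficulty that the closure language for $\G_F$ contains the non-unary functions $F_{k,n}$ recording the common root-sets of closures. First I would reduce to an auxiliary problem in a purely unary language: given $A\in\G_F$, forget the functions $F_{k,n}$ and retain only the graph $R$ together with the unary closure functions $F_k$, producing a structure $A'$ in a class $\G'_F$ analogous to $\G_0$. Apply the Herwig--Lascar-style argument of~\cite{EHN} to obtain an EPPA-extension $A'\subseteq B'\in\G'_F$. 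Then I would re-inject the non-unary data by coherently gluing, over the root-skeleton of $B'$, free amalgamation copies of the structures needed to realise the relations $F_{k,n}$; this is where fineness and the $F$-predimension condition must be preserved simultaneously. The main obstacle is that naively gluing can violate either $d$-fineness (producing a proper refinement) or the bound $\delta(X)\geq F(\lvert X\rvert)$. I would control this in the spirit of Lemma~\ref{lem:fineamalgamation} by performing the gluing as a sequence of free amalgamations over successor-$d$-closed substructures; at each step, Example~\ref{320}-type computations keep the result inside $\C_F$, while Lemma~\ref{lem:sdcl} will be used to verify that fineness is preserved because the roots of new vertices are confined to $B'$.

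For the maximality statement, assume $\Aut(O_F)\leq H\leq\Aut(M_F)$ with $H$ amenable, and suppose for contradiction that there exists $h\in H\setminus\Aut(O_F)$. The plan is to combine the EPPA of the first half with the topological dynamics of Section~\ref{KPTsec}, in parallel with the proof of Theorem~\ref{69}. Consider the $\Aut(M_F)$-flow $X(\G_F)$ of $\G_F$-expansions of $M_F$ and its distinguished point $p_0$ corresponding to $O_F$ itself, whose stabiliser is exactly $\Aut(O_F)$. Amenability of $H$ gives an $H$-invariant Borel probability measure $\mu$ on $X(\G_F)$. Now use EPPA for $\G_F$ (via Theorem~\ref{thm:KR}) to approximate $\Aut(O_F)$ by an increasing chain of compact subgroups whose union is dense in $\Aut(O_F)$; these compact subgroups act on $X(\G_F)$ and, because they are compact, each admits its own invariant measure concentrated on finite sets.

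I would then argue that any $H$-invariant measure $\mu$ must be supported on the set of $\G_F$-structures whose $d$-closures agree with those of $O_F$ on every finite $\dleq$-closed $A\subseteq M_F$. This uses the same ramification idea as Theorem~\ref{sparsethm}: if $\mu$ spread positive mass over incompatible closure-assignments at a vertex $a$, the out-degree-$2$ constraint on orientations together with the infinite number of $\Aut(O_F)$-translates of $a$'s closure would force the total measure to exceed $1$, as in the counting argument of Theorem~\ref{38}. Hence $\mu$ is concentrated on $\{p_0\}$, so $p_0$ is fixed by $H$, and therefore $H\leq\Aut(O_F)$. The hardest step is transferring the out-degree rigidity from orientations to the reduct $X(\G_F)$, where the explicit edge directions have been forgotten; I expect that this requires passing back through the fine orientation flow $X(\E_F)$ and using the bijection between $d$-fine orientations modulo re-orienting a $\dleq$-closed piece and their induced closure reducts, established implicitly in Lemma~\ref{lem:fineamalgamation}.
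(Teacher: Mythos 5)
The first thing to note is that the paper offers no proof of this statement: it is posed explicitly as an open conjecture, and the authors say precisely why the available machinery fails (``Because the language of $\G_F$ contains non-unary functions, we cannot establish the expansion property for partial automorphisms by application of \cite{EHN}''). So there is nothing to compare your argument against; the question is whether your proposal closes the gap, and it does not. For the EPPA half, your plan correctly isolates the difficulty but then leaves it unsolved: the step ``re-inject the non-unary data by coherently gluing \ldots{} copies of the structures needed to realise the relations $F_{k,n}$'' \emph{is} the conjecture. A strong partial automorphism of $A \in \G_F$ must respect the functions $F_{k,n}$, which couple the closures of different vertices through their common root-sets, and an EPPA-witness $B'$ built only from the unary reduct has no reason to admit extensions that are coherent with any re-attached non-unary data. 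You acknowledge that naive gluing can destroy either $d$-fineness or the bound $\delta(X)\geq F(\lvert X\rvert)$, but you give no construction avoiding both; appealing to Lemma~\ref{lem:fineamalgamation} ``in spirit'' does not help, since that lemma concerns free amalgamation over successor-$d$-closed substructures, whereas an EPPA-witness must accommodate all partial automorphisms simultaneously, which typically forces identifications that are not free.

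For the maximality half, the central step is also missing. Theorem~\ref{38} is engineered to contradict the existence of \emph{any} invariant measure (it proves non-amenability); it does not localise the support of an invariant measure assumed to exist. To conclude that an $H$-invariant $\mu$ on the space of closure-expansions concentrates on the single point $p_0$ corresponding to $O_F$, you would need to show that every event of the form ``$\scl(a)=C$'' with $C\neq \scl_{O_F}(a)$ lies in an infinite $H_a$-orbit (hence has $\mu$-measure zero) while the correct closure lies in a finite one. But $H$ is an unknown group containing $\Aut(O_F)$, and nothing in the sketch rules out an incorrect closure-assignment with a finite $H_a$-orbit; nor is it shown that the correct assignment retains a finite orbit once $H$ is enlarged. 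As a sanity check, $\Aut(M_F)$ has many amenable subgroups that do not fix $p_0$ (conjugates of $\Aut(O_F)$, or $\Aut(N_F)$ from Theorem~\ref{611}), so invariant measures for amenable subgroups certainly need not concentrate at $p_0$; the hypothesis $\Aut(O_F)\leq H$ must enter in an essential, quantitative way, and your proposal does not say where. The final ``transfer of out-degree rigidity back through $X(\E_F)$'' is likewise only named, not performed. In short, both halves of the proposal are plausible programmes that stall exactly at the points the authors identify as open.
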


Our results can also be formulated in the context of \cite{AK}. One can easily see (implicitly in \cite{AK}) that the concepts of excellent pair $(\mathcal K,\mathcal K^*)$ and consistent random $\mathcal K$-admissible orderings to classes endowed with strong mappings and (more complicated)  expansions, thus obtaining results analogous to Proposition 9.2 of \cite{AK}. 
This is interesting if we apply this to ordering theorems for substructures (i.e. to canonical orderings) which were in this context studied in~\cite{NRO}, see also \cite{BJ} for related research.

\end{document}